\documentclass{article}
\usepackage{here}
\usepackage[T1]{fontenc}
\usepackage[english]{babel}
\usepackage[latin1]{inputenc}
\usepackage{amsfonts}
\usepackage{indentfirst}
\usepackage{mathenv}
\usepackage{amsthm}
\usepackage{here}
\usepackage{amsmath}
\usepackage{color}
\usepackage{graphicx}
\usepackage{epsfig}
\usepackage{amssymb}
\usepackage{amstext}
\usepackage{color}
\usepackage{hyperref} 
\usepackage{geometry}
\numberwithin{equation}{section}
\geometry{hmargin=100pt, vmargin=140pt}
\newtheorem{Theorem}{Theorem}
\newtheorem{Lemma}{Lemma}

\newtheorem{Proposition}{Proposition}

\allowdisplaybreaks[1]
\theoremstyle{definition}
\newtheorem{Definition}{Definition}
\theoremstyle{remark}
\newtheorem{Remark}{Remark}

\newcommand{\R}{\mathbb{R}}

\newcommand{\cqfd}
{%
\mbox{}%
\nolinebreak%
\hfill%
\rule{2mm}{2mm}%
\medbreak%
\par%
}

\date{\empty}

\begin{document}

\author{Jean-Michel Coron\footnotemark[1], Pierre Lissy\footnotemark[1]}
\footnotetext[1]{Work
supported by ERC advanced grant 266907 (CPDENL) of the 7th Research Framework Programme (FP7).}
\title{Local null controllability of the three-dimensional Navier-Stokes system with a distributed control having two vanishing components}
\maketitle
\vspace{1\baselineskip}
\noindent{\textbf{Abstract}}

\vspace{1\baselineskip}
In this paper, we prove a local null controllability result for the three-dimensional
Navier-Stokes equations on a (smooth) bounded domain
of $\mathbb R^3$ with null Dirichlet boundary
conditions. The control is distributed in an arbitrarily small nonempty open subset and has two vanishing components. J.-L.~Lions and E.~Zuazua proved that the linearized system is
not necessarily null controllable even if the control is distributed on the entire domain, hence the standard linearization method fails. We use the return method together with a new algebraic method inspired by the works of M.~Gromov and previous results by  M.~Gueye.

\vspace{1\baselineskip}
\noindent{\bf Keywords:}
 Navier-Stokes System; Null controllability; Return method.
\vspace{2\baselineskip}

\section{Introduction}
\subsection{Notations and statement of the theorem}
Let $T>0$, let $\Omega$ be  a nonempty bounded domain of $\R^3$ of class $C^\infty$ and let $\omega$ be a nonempty open subset of $\Omega$. We define $Q\subset \R\times \R^3$ by

$$
Q:=(0,T)\times \Omega=\{(t,x)| \, t\in (0,T) \text{ and } x \in \Omega\}
$$ and we call
$$\Sigma:=[0,T]\times\partial\Omega.$$
The current point $x\in \R^3$ is $x=(x_1,x_2,x_3)$.
The $i$-th component of a vector (or a vector field) $f$ is denoted $f^i$. The control is $u=(u^1,u^2,u^3)\in L^2 (Q)^3$. We require that
the support of $u$ is included in $\omega$, which is our control domain. We impose that two components of $u$ vanish, for example the first two:
\begin{equation}
u^1= 0 \text{ and } u^2=0 \text{ in } Q,
\label{cn}
\end{equation}
so that $u$ will be written under the form $(0,0,1_\omega v)$ with $v\in L^2(Q)$ from now on, where $1_\omega: \Omega \rightarrow \R$ is the characteristic function of $\omega$:
$$
1_\omega = 1 \text{ in } \omega, \, 1_\omega = 0 \text{ in } \Omega \setminus \omega.
$$
Let us define
$$V:=\{y\in H_0^1(\Omega)^3|\nabla \cdot y=0\}.$$
The space $V$ is equipped with the $H^1_0$-norm. Let us denote by $H$ the closure of $V$ in $L^2(\Omega)^3$. The space $H$ is equipped with the $L^2$-norm.

We are interested in the following Navier-Stokes control system:
\begin{equation}
\left \{\begin{aligned}
y_t-\Delta y+(y\cdot \nabla)y+\nabla p&=(0,0,1_\omega v) &\mbox{ in }Q,\\
\nabla \cdot y&=0&\mbox{ in }Q,\\
y&= 0&\mbox{ on }\Sigma.
\end{aligned}
\right .
\label{NS}
\end{equation}
 From \cite[Theorem 3.1, p.~282]{MR603444}, we have the following existence result: For every $y^0\in H$, there exist $y\in L^2((0,T),V)\cap L^\infty ((0,T),H)$ and $p\in L^2(Q)$ satisfying
\begin{gather}
\label{conditinity0}
y(0,\cdot)=y^0 \text{ in } \Omega
\end{gather}
such that \eqref{NS} holds.

Our main result is the following theorem,
which expresses the small-time local null-controllability of \eqref{NS}:
\begin{Theorem}
\label{th-STLC}
For every $T>0$ and for every $r >0$, there exists $\eta>0$ such that, for every $y^0\in V$ verifying $||y^0||_{H_0^1(\Omega)^3}\leqslant \eta$, there exist  $v\in L^2(Q)$ and a solution $(y,p)\in L^2((0,T),H^2(\Omega)^3\cap V)\cap L^\infty ((0,T),H^1(\Omega)^3\cap V)\times L^2(Q)$ of \eqref{NS}-\eqref{conditinity0} such that
\begin{gather}
y(T,\cdot )= 0,
\\
\label{upetit}
||v||_{L^2(Q)^3}\leqslant r,
\\
\label{ypetit}
||y||_{L^2((0,T),H^2(\Omega)^3)\cap L^\infty ((0,T),H_0^1(\Omega)^3)}\leqslant r.
\end{gather}

\end{Theorem}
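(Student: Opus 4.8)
The plan is to combine the return method with an algebraic (Gromov-type) reduction of the number of controls, and then to close the argument by a fixed-point scheme.

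\textbf{Step 1: return method, reduction to a linear problem.} Since the linearization of \eqref{NS} at the equilibrium $0$ is the Stokes system, which by Lions--Zuazua need not be null controllable even with a full interior control, linearizing at $0$ is hopeless. Instead I would fix once and for all a reference trajectory $(\bar y,\bar p)$ of \eqref{NS} associated with an admissible control $(0,0,1_\omega\bar v)$ and satisfying $\bar y(0,\cdot)=\bar y(T,\cdot)=0$, chosen explicit and as simple as possible so that the coupling it induces in the linearized operator is strong enough to restore controllability with a scalar control on the third equation; a natural candidate is a field whose only nonvanishing component is the third one and which is independent of $x_3$, so that automatically $\nabla\cdot\bar y=0$, $(\bar y\cdot\nabla)\bar y=0$ and the associated forcing has the admissible form $(0,0,\ast)$ (the precise profile and its localization in $\omega$ being adjusted by hand). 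Writing $y=\bar y+z$, $p=\bar p+q$, $v=\bar v+w$, the pair $(z,q)$ must solve, in $Q$,
\[
\begin{cases}
z_t-\Delta z+(\bar y\cdot\nabla)z+(z\cdot\nabla)\bar y+\nabla q=(0,0,1_\omega w)-(z\cdot\nabla)z,\\
\nabla\cdot z=0,\quad z|_\Sigma=0,\quad z(0,\cdot)=y^0,
\end{cases}
\]
and $y(T,\cdot)=0$ is equivalent to $z(T,\cdot)=0$. Because $(z\cdot\nabla)z$ is quadratic and $y^0$ is small, it suffices to prove a quantitative null controllability result for the linearized system (with an arbitrary small forcing on the right-hand side) and then conclude by a fixed-point argument.

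\textbf{Step 2: controllability of the linearized system by the algebraic method.} Set $L(z,q):=z_t-\Delta z+(\bar y\cdot\nabla)z+(z\cdot\nabla)\bar y+\nabla q$. It is classical, via global Carleman estimates of Fursikov--Imanuvilov type for the Oseen/Stokes system, that for a smooth forcing $f$ compactly supported in a smaller subdomain $\omega_0\Subset\omega$ and allowed to have all three components, the system $L(z,q)=f$, $\nabla\cdot z=0$, $z|_\Sigma=0$, is null controllable, with weighted estimates on $(z,q,f)$ blowing up exponentially as $t\to0$ and $t\to T$. The new point is to replace this fictitious three-component forcing by a genuine one of the form $(0,0,1_\omega v)$. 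For this I would seek \emph{local} (i.e. partial differential) operators $\mathcal A$ (scalar $\to$ divergence-free vector field), $\mathcal B$ (scalar $\to$ scalar) and $\mathcal C$ (scalar $\to$ scalar) such that, for every smooth scalar $\varphi$,
\[
L(\mathcal A\varphi,\ \mathcal B\varphi)=(\varphi,\,0,\,\mathcal C\varphi),\qquad \nabla\cdot(\mathcal A\varphi)=0,
\]
together with the analogous identity producing $(0,\varphi,\mathcal C'\varphi)$; this is precisely the assertion that the linearized Navier--Stokes operator is \emph{algebraically solvable} in the sense of Gromov and of Gueye, and it is made possible by the presence of the zeroth-order coupling $(z\cdot\nabla)\bar y$ for the chosen $\bar y$, which lets successive differentiations of an auxiliary source reconstruct the first two components of the forcing. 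Granted such identities, if $(\tilde z,\tilde q)$ is driven to rest by a full forcing $f=(f^1,f^2,f^3)$ compactly supported in $\omega_0$, then $z:=\tilde z-\mathcal A f^1-\mathcal A' f^2$, with pressure and control corrected accordingly, satisfies $L(z,q)=(0,0,V)$ with $V$ still supported in $\overline{\omega_0}\subset\omega$ (locality of $\mathcal C,\mathcal C'$) and $\nabla\cdot z=0$; taking $f$ to vanish with enough time derivatives at $t=0$ and $t=T$ (which the Carleman weights allow) preserves $z(0,\cdot)=y^0$ and $z(T,\cdot)=0$. Propagating the weighted norms through these algebraic manipulations yields the desired quantitative null controllability of $L$ with a control having two vanishing components.

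\textbf{Step 3: fixed point, regularity, smallness.} I would work in weighted Sobolev spaces in $(t,x)$ adapted to the Carleman weights, so that membership already forces decay as $t\to T$. Let $\Lambda$ be the continuous linear solution operator from Step 2 sending a small forcing $g$ to a controlled triple $(z,q,v)$ with $L(z,q)=g+(0,0,1_\omega v)$, $z(0,\cdot)=y^0$, $z(T,\cdot)=0$. Solving the semilinear problem of Step 1 then amounts to finding a fixed point of $z\mapsto\Lambda(-(z\cdot\nabla)z)$; since $z\mapsto(z\cdot\nabla)z$ is quadratic and continuous from the state space into the forcing space, a contraction (or Kakutani) argument applies on a small ball once $\|y^0\|_{H_0^1(\Omega)^3}$ is small enough. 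A parabolic bootstrap places the solution in $L^2((0,T),H^2(\Omega)^3\cap V)\cap L^\infty((0,T),H_0^1(\Omega)^3\cap V)$, and the weighted estimates give \eqref{upetit}; to obtain \eqref{ypetit} one additionally lets the uncontrolled solution evolve on $[0,T-\delta]$, where it stays of size $O(\|y^0\|)$, and performs the above construction only on $[T-\delta,T]$, rescaling so that $\bar y$ contributes an arbitrarily small amount to $y$.

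\textbf{Main difficulty.} The crux is Step 2: choosing the reference trajectory $\bar y$ and proving the algebraic solvability of the associated linearized Navier--Stokes operator, i.e. that the two missing control components can be generated by explicit differential operators applied to a source supported in the control region, in a way compatible with incompressibility and with the Carleman weights. This is the step genuinely requiring the Gromov--Gueye machinery rather than classical PDE estimates, and it is exactly where the Lions--Zuazua obstruction — which forbids $\bar y\equiv0$ — must be circumvented.
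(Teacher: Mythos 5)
Your overall architecture (return method around a nontrivial reference trajectory, Gromov-type algebraic solvability to eliminate two control components, Carleman-based controllability of an auxiliary system, fixed point/inverse mapping) is exactly the strategy of the paper. However, two concrete points in Steps 1--2 would make the argument fail as written.

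First, the proposed reference trajectory $\bar y=(0,0,h(t,x_1,x_2))$ cannot work. With this choice $(z\cdot\nabla)\bar y=(0,0,z^1\partial_{x_1}h+z^2\partial_{x_2}h)$, so the coupling feeds $z^1,z^2$ \emph{into} the third equation but injects nothing into the first two: those equations keep the form $z^i_t-\Delta z^i+h\,\partial_{x_3}z^i+\partial_{x_i}q=0$, i.e. the control reaches $z^1,z^2$ only through the pressure, which is precisely the Stokes-type structure behind the Lions--Zuazua obstruction. One can make the failure of algebraic solvability explicit: for any $F\in C^\infty$, the triple
\begin{equation*}
z^1=x_1F(t),\qquad z^2=-x_2F(t),\qquad \pi=\tfrac12\bigl(x_2^2-x_1^2\bigr)F'(t)
\end{equation*}
solves the homogeneous adjoint system $\mathcal L_0^*(z,\pi)=0$ when $\bar y^1=\bar y^2=0$, yet $\partial_{x_1}z^1\neq0$; hence no local operator can reconstruct the missing components, whatever $\mathcal N$ you take. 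The paper avoids this by choosing $\bar y=(g(t,r^2,x_3)x_1,\,g(t,r^2,x_3)x_2,\,h(t,r^2,x_3))$ with \emph{all three} components nonzero and $\partial_{x_3}\bar y^1,\partial_{x_3}\bar y^2\neq0$, so that genuine zero-order couplings appear in the first two linearized equations; even then, solvability is only established by a massive computer-assisted rank computation on the system differentiated $19$ times, not by a structural argument.

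Second, the identity you aim for, $L(\mathcal A\varphi,\mathcal B\varphi)=(\varphi,0,\mathcal C\varphi)$, is provably unattainable for \emph{every} reference trajectory (this is Remark~\ref{remarkpasresoluble} of the paper): taking $z^1=F_1(t)$, $z^2=F_2(t)$ and $\pi=-F_1'x_1-F_2'x_2+F_1\bar y^1+F_2\bar y^2$ gives a nontrivial solution of the homogeneous adjoint system, which obstructs producing a right-hand side whose first component is $\varphi$ itself. One must instead settle for right-hand sides in divergence form on the first two equations (the operator $\mathcal B$ of \eqref{defmathcalB}), and this in turn forces the auxiliary controls to be curls/divergences of functions lying in very high-order weighted Sobolev spaces (since the algebraic operator $\mathcal M$ has order $17$) --- a point your Step 2 does not address and which standard $L^2$ Carleman/HUM controls do not provide; the paper needs a dedicated regularization argument (Proposition~\ref{cor-regular-controls}) for this.
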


\begin{Remark}\label{uniqueness} Once a control $v\in L^2(Q)$ is given, the corresponding solution $(y,p)$ of \eqref{NS}, \eqref{conditinity0} and \eqref{ypetit} given by Theorem~\ref{th-STLC} is unique (recall that for the Navier-Stokes system, the uniqueness of $(y,p)$ means that $y$ is unique and $p$ is unique up to a constant depending on the time). This comes from the uniqueness result given in \cite[Theorem 3.4, p. 297]{MR603444}: One has $$L^\infty((0,T),H^1(\Omega)^3)\subset L^8((0,T),H^1(\Omega)^3)\subset L^8((0,T),L^4(\Omega)^3)$$ thanks to a classical Sobolev embedding, and there is at most one solution $(y,p)$ of \eqref{NS} and \eqref{conditinity0} in the space
$$L^2((0,T),V)\cap L^\infty((0,T),H)\cap L^8((0,T),L^4(\Omega)^3)\times L^2(Q).$$
\end{Remark}
\begin{Remark} One observes that in Theorem~\ref{th-STLC} the initial condition $y^0$ is more regular than usual ($y^0\in H$). In fact, using the same arguments as in \cite{04FGIP} and \cite{MR2225301} (see also \cite[Remark 1]{CG1}), one can easily extend the previous theorem to small initial data in $H\cap L^4(\Omega)^3$ with a solution $(y,p)\in L^2((0,T),V)\cap L^\infty((0,T),H)\times L^2(Q)$. In this case, Remark~\ref{uniqueness} is no longer true and there might possibly exist many solutions $(y,p)$ verifying \eqref{NS} and \eqref{conditinity0} once $v$ is given.
\end{Remark}

\subsection{Some previous results}
The controllability of the two or three-dimensional Navier-Stokes equations with a distributed control has been studied in numerous papers. In general, for Navier-Stokes equations, it is relevant to consider the approximate controllability, the null controllability or the exact controllability to the trajectories, the second one being a particular case of the third one.

In \cite{Icocv}, a first result of local exact controllability to the trajectories was established
under technical conditions: $\Omega$ had to be homeomorphic to a ball, the control had to be supported in a nonempty open subset whose closure is included in $\Omega$, and the target trajectory had to be a stationary solution of the Navier-Stokes equation. Moreover, there were some technical regularity conditions for these stationary solutions. A similar result for the linearized Navier-Stokes equations was established but with the same strong conditions. Many of these hypotheses were removed in \cite{Icocv2}.

Then, it was proved in \cite{04FGIP} the local exact controllability to the trajectories with regularity conditions that were weaker and more suitable for the study of the Navier-Stokes equations. In this article, the authors also proved some exact controllability results for linearized Navier-Stokes systems, with very weak regularity conditions. The same authors proved in \cite{MR2225301} the  local exact controllability to the trajectories with a control having one vanishing component, provided that $\omega$ ``touches'' the boundary of the domain $\Omega$ in some sense. Later on it was proved in \cite{MR2503028} a local null controllability result for the Stokes system with a control having a vanishing component without the geometrical condition on $\omega$, but the authors were not able to extend it to the nonlinear Navier-Stokes system. A recent work (\cite{CG1}) improved the previous one and proved the local null controllability of the Stokes system with an additional source member by means of a control having a vanishing component, which enabled the authors to prove the local null controllability of the Navier-Stokes system for a control having a vanishing component.
In all these articles, the main points of the proof were to establish first the controllability of the linearized control system
around the target trajectory thanks to Carleman estimates on the adjoint of the linearized equation, and then to use an inverse mapping theorem or a fixed-point theorem to deal with the nonlinear system.

The natural question is then: Can we remove another component
of the control, which would be an optimal result with respect to the number of controls?
Reducing the number of components of the control
is important for applications, and have already been studied many times for linear or
parabolic systems of second order (that are quite similar to linearized Navier-Stokes systems), see for example \cite{AmmarKhodja2011555}, where a necessary and sufficient condition to control a system of coupled parabolic equations with constant coefficients and with less controls than equations is given, or \cite{MR2554977,2012-Mauffrey} for time-dependent coefficients. If the coefficients depend on the time and the space, there are no general results, in particular if we consider two coupled parabolic systems where the coupling region and the control region do not intersect (a partial result under the Geometric Control Condition is given in \cite{MR3039207}). For a recent
survey on the controllability of coupled linear
parabolic equations, see \cite{2011-ABGT-MCRF}.
\subsection{The linear test}
\label{sub-linear}
To obtain Theorem~\ref{th-STLC}, the first natural idea  is to linearize the system around $0$, i.e. to consider the Stokes control system
\begin{equation}
\left \{\begin{aligned}
y_t-\Delta y+\nabla p&=(0,0,v1_{\omega})&\mbox{ in } Q,
\\ \nabla \cdot y&=0&\mbox{ in } Q,
\\y(0,\cdot )&=y^0&\mbox{ in } \Omega,
\\y&= 0&\mbox{ on }\Sigma.
\end{aligned}
\right . \label{S}
\end{equation}
It is well-known (see for example \cite{MR2225301}, or \cite{Icocv2}) that if this linear system were null controllable (with, in addition, an arbitrary source term in a suitable space), then applying an inverse mapping theorem (for example the one presented in \cite{MR924574}) in some relevant weighted spaces,  we  would obtain that \eqref{NS} is locally null controllable around $0$. However, the linear control system \eqref{S} is in general not null controllable and not approximately controllable: In \cite{MR1371594}, it is proved that this is for example the case if $\Omega$ is a cylinder with a circular generating set and with an axis parallel to $e_3$, even if we control on the entire cylinder (the approximate controllability property holds ``generically'' with respect to the generating set of the cylinder as explained in \cite{MR1371594} though).

Since linearizing around $0$ is not relevant, we are going to use the return method, which consists in linearizing system \eqref{NS} around a  particular trajectory $(\overline y,\overline p,\overline u)$ (that we construct explicitly) verifying $\overline y(0,\cdot )=\overline y(T,\cdot )= 0$, proving that the linearized system (with a source term $f$ verifying an exponential decrease condition at time $t=T$) is null controllable, and then concluding by a usual inverse mapping argument that our system is locally null controllable. This method was introduced in \cite{92mcss} for a stabilization problem concerning nonlinear ordinary differential equations and first used in the context of partial differential equations in \cite{92cras}. The return method was already successfully used  in \cite{96cocv,96rjmp,99Fursikov-Imanuvilov} to obtain global controllability results for the Navier-Stokes equations and in \cite{MR2558423} to prove the local null controllability for the Navier-Stokes equations on the torus $\mathbb{T}_2$ when the control has one vanishing component. For more explanations about the return method and other examples of applications, see \cite[Chapter 6]{MR2302744}.

\subsection{Structure of the article and sketch of the proof of Theorem~\ref{th-STLC}}
The paper is organized as follows.
\begin{itemize}
\item In Section~\ref{secconstrbar}, according to what was explained at the end of Subsection~\ref{sub-linear}, we construct a family of explicit particular trajectories $(\overline y,\overline p,\overline u)$ of the controlled Navier-Stokes system \eqref{NS} going from $0$ at time $t=0$ to $0$ at time $t=T$. These trajectories are compactly supported in $[T/4,T]\times\omega$ and vanish exponentially at time $t=T$. Moreover, they are polynomials in space on some subcylinder of $\omega$ denoted $\mathcal C_2$, and they can be arbitrarily small. We then linearize \eqref{NS} around $(\overline y,\overline p,\overline u)$ and study the linearized equation
\begin{equation}
\label{sketch1}
\left\{\begin{aligned}
y^1_t-\Delta y^1+(\overline y\cdot \nabla)y^1 +(y\cdot \nabla){\overline y}^1+ \partial_{x_1}p&=f^1&\mbox{ in } Q,
\\
y^2_t-\Delta y^2+(\overline y\cdot \nabla)y^2 +(y\cdot \nabla){\overline y}^2+ \partial_{x_2}p&=f^2&\mbox{ in } Q,
\\
y^3_t-\Delta y^3+(\overline y\cdot \nabla)y^3 +(y\cdot \nabla){\overline y}^3+ \partial_{x_3}p&=1_\omega v+f^3&\mbox{ in } Q,
\\
\nabla \cdot y&=0&\mbox{ in }Q,
\\
y&= 0&\mbox{ on }\Sigma,
\end{aligned}\right .
\end{equation}
where $f$ is some source term in an appropriate space.
\item Section~\ref{seccontrollinearized} is devoted to proving that \eqref{sketch1} is indeed null controllable (Proposition~\ref{cor-une-seule-composante}). Subsection~\ref{secmotivation-notations} is dedicated to introducing some useful notations and the crucial Proposition~\ref{prop-alg-controllability}. This proposition explains that we can split up our proof of the null controllability of the linearized equations with a scalar control into two parts:
\item Firstly, we control the following linearized Navier-Stokes system:
\begin{gather}
\label{sketch2}
\left\{\begin{aligned}
y^{*1}_t-\Delta y^{*1}+(\overline y\cdot \nabla)y^{*1} +(y^*\cdot \nabla){\overline y}^1+ \partial_{x_1}p^*&=\mathcal{B}^1u^*+f^1&\mbox{ in } Q,
\\
y^{*2}_t-\Delta y^{*2}+(\overline y\cdot \nabla)y^{*2} +(y^*\cdot \nabla){\overline y}^2+ \partial_{x_2}p^*&=\mathcal{B}^2u^*+f^2&\mbox{ in } Q,
\\
y^{*3}_t-\Delta y^{*3}+(\overline y\cdot \nabla)y^{*3} +(y^*\cdot \nabla){\overline y}^3+ \partial_{x_3}p^*&=\mathcal{B}^3u^*+f^3&\mbox{ in } Q,
\\
\nabla \cdot y^*&=0&\mbox{ in }Q,
\\
y^*&= 0&\mbox{ on }\Sigma,
\end{aligned}\right .
\end{gather}
where $\mathcal B$ is some suitable local control operator that acts on each equation.
This is the purpose of Subsection~\ref{seccontrollabilitywithB}, where the controllability of System~\eqref{sketch2} is proved thanks to the usual HUM method. More precisely, we prove an appropriate Carleman estimate with observation $\mathcal B^*$ on the adjoint equation of \eqref{sketch2} (Lemma~\ref{Carl1}), so that we create controls in the image of $\mathcal B$ thanks to the study of an appropriate Lax-Milgram type problem, which also enables us to obtain controls that are very regular in the sense that they are in weighted Sobolev spaces of high order in space and time (Proposition~\ref{cor-regular-controls}).
Let $(y^*,p^*,u^*)$ be a trajectory of \eqref{sketch2} that brings the initial condition $y^0$ to $0$ at time $T$, with a very regular $u^*$ compactly supported in space at each time in some open subset $\omega_0$ of $\mathcal C_2$ to be chosen later, and that decreases exponentially at time $t=T$. We emphasize that $(y^*,p^*)$ is less regular than $\mathcal Bu^*$ (however, it is in some weighted Sobolev space of small order) because the source term $f$ is not as regular as $\mathcal Bu^*$.
\item Secondly, we study in Subsection~\ref{secalgebraicsolvability} the following system locally on $Q_0:=[T/2,T]\times \omega_0$:
\begin{gather}
\label{sketch3}
\left\{\begin{aligned}
\tilde y^1_t-\Delta \tilde y^1+(\overline y\cdot \nabla)\tilde y^1 +(\tilde y\cdot \nabla){\overline y}^1+ \partial_{x_1}\tilde p&=-\mathcal{B}^1u^*&\mbox{ in } Q_0,
\\
\tilde y^2_t-\Delta \tilde y^2+(\overline y\cdot \nabla)\tilde y^2 +(\tilde y\cdot \nabla){\overline y}^2+ \partial_{x_2}\tilde p&=-\mathcal{B}^2u^*&\mbox{ in } Q_0,
\\
\tilde y^3_t-\Delta \tilde y^3+(\overline y\cdot \nabla)\tilde y^3 +(\tilde y\cdot \nabla){\overline y}^3+ \partial_{x_3}\tilde p&=-\mathcal{B}^3u^*+\tilde v&\mbox{ in } Q_0,
\\
\nabla \cdot \tilde y&=0&\mbox{ in }Q_0,
\end{aligned}\right .
\end{gather}
where $u^*$ has been introduced above, and where the unknowns are $(\tilde y,\tilde p,\tilde v)$. We want to prove that there exists a solution $(\tilde y,\tilde p,\tilde v)$ of \eqref{sketch3} (extended by $0$ on $[T/2,T]\times\Omega$) which has the same support as $u^*$. This seems reasonable because System~\eqref{sketch3} is analytically underdetermined: we have $5$ unknowns (the $3$ components of $\tilde y$, the pressure $\tilde p$ and the scalar control $\tilde v$) and only $4$ equations.
In fact, we prove in Proposition~\ref{prop-solvable} that is is possible to find such a  $(\tilde y,\tilde p,\tilde v)$ which can moreover be expressed as a linear combination of $u^*$ and some of its derivatives up to a certain order. This explains why we need $u^*$ to be very regular. Since $u^*$ decreases exponentially at time $T$, this is also the case for $(\tilde y,\tilde p,\tilde v)$.  The main idea behind the proof of the existence of such a $(\tilde y,\tilde p,\tilde v)$ is to consider the adjoint system of \eqref{sketch3} and to differentiate the equations appearing in this system until we get more equations than ``unknowns'', the ``unknowns'' being there the functions and all their derivatives appearing in the equations of the adjoint system.
Since Subsection~\ref{secalgebraicsolvability} is the most innovative, important, and difficult part of the article, we give some further details.
\begin{enumerate}
\item In Paragraph~\ref{subsub-adj}, we make a choice for operator $\mathcal B$ and we prove that the existence of $(\tilde y,\tilde p,\tilde v)$ can be reduced to proving the following property: There exists some $\omega_0$ and
a linear partial differential operator $\mathcal{N}:C^{\infty}(Q_0)^4
\rightarrow C^{\infty}(Q_0)^6$ such that
for every $\varphi=(\varphi^1,\varphi^2,\varphi^3,\varphi^4)\in C^{\infty}(Q_0)^4$, if
$(z^1,z^2,\pi)\in C^{\infty}(Q_0)^3$ is a solution of
\begin{equation}
\left\{\begin{aligned}
-2\partial_{x_3}\overline y^1 \partial_{x_1}z^1-\partial_{x_3}\overline y^2\partial_{x_2}z^1+(\partial_{x_1}\overline y^1-\partial_{x_3}\overline y^3)\partial_{x_3}z^1-\overline y^1 \partial^2_{x_1x_3}z^1\\-\overline y^2\partial^2_{x_2x_3}z^1
-\overline y^3\partial^2_{x_3x_3}z^1-\partial^2_{x_3t}z^1-\Delta \partial_{x_3}z^1
-\partial_{x_3}\overline y^2\partial_{x_1}z^2 \\+\partial_{x_1} \overline y^2\partial_{x_3}z^2=\partial_{x_3}\varphi^1-\partial_{x_1}\varphi^3,
\\
-\partial_{x_3}\overline y^1\partial_{x_2}z^1+\partial_{x_2}\overline y^1 \partial_{x_3}z^1-\partial_{x_3}\overline y^1\partial_{x_1}z^2-\overline y^1\partial^2_{x_1x_3}z^2-2\partial_{x_3}\overline y^2\partial_{x_2} z^2\\-\overline y^2 \partial^2_{x_2x_3}z^2+(\partial_{x_2}\overline y^2-\partial_{x_3}\overline y^3)\partial_{x_3}z^2
-\overline y^3\partial^2_{x_3x_3}z^2-\partial^2_{x_3t}z^2-\Delta \partial_{x_3}z^2\\=\partial_{x_3}\varphi^2-\partial_{x_2}\varphi^3,
\\
-\partial_{x_1}z^1-\partial_{x_2}z^2 =\varphi^4.
\label{sketch4}
\end{aligned}\right .
\end{equation}
then $(-\partial_{x_1}z^1,-\partial_{x_2}z^1,-\partial_{x_3}z^1,-\partial_{x_1}z^2,-\partial_{x_2}z^2,-\partial_{x_3}z^2)=\mathcal{N} \varphi$.
\item In Paragraph~\ref{denomb} we study the overdetermined system \eqref{sketch4}. If we consider $z^1$, $z^2$, and all their derivatives at every order as \emph{independent algebraic unknowns} (i.e. we forget that $\partial_{x_1}z^1,\ldots$ are derivatives of $z^1$ and consider them as unknowns of System \eqref{sketch4}), we obtain a system of $3$ equations with $20$ unknowns. However, we can prove that if we differentiate the equations of System \eqref{sketch4} enough times, one can obtain more equations than unknowns. In particular, if the two first equations of \eqref{sketch4} are differentiated 19 times and if the last equation of \eqref{sketch4} is differentiated 21 times, then we get $30360$ equations and $29900$ unknowns. We can write the big system describing these equations as follows:

$$L_0(t,x)Z= \Phi,$$ where $L_0\in C^\infty(Q_0;\mathcal{M}_{30360\times 29900}(\R))$, $Z\in \R^{29900}$ contains the derivatives of $z^1$ and $z^2$ up to the order $22$ and $\Phi \in \R^{30360}$ contains the derivatives of $\varphi$ up to the order $21$. If we are able to find a suitable submatrix of $L_0$ (denoted $P$) that is invertible, then roughly the matrix $P^{-1}$ (seen as a differential operator) will be a good candidate for $\mathcal N$.
\item In Paragraph~\ref{subsub-over}, we describe how we created a program that enables us to differentiate the equations of system \eqref{sketch4} and that finds a proper matrix $P$. Of course it cannot be done by hand, we have to use a computer. Let us point out that in the computer part of the proof, we only use \emph{symbolic computations}, so that \emph{no approximations} are made by the computer.

We first prove  (cf. Lemma~\ref{lemmalowerP}) that it is enough to find a suitable matrix $P$ which is invertible at some precise point $\xi^0$, i.e. it is enough to consider $L_0(\xi^0)$ for some well-chosen $\xi^0$. We explain in Lemma~\ref{lemm-good-reordering} how we found $P(\xi^0)$ thanks to a suitable reordering of matrix $L_0(\xi^0)$ given by the Dulmage-Mendelsohn decomposition of $L_0(\xi^0)$.
\end{enumerate}

\item We now remark that $(y^*+\tilde y,p^*+\tilde p,\tilde v)$ is a trajectory of \eqref{sketch1} (see \eqref{sketch2} and \eqref{sketch3}) that brings the initial condition $y^0$ to $0$ at time $T$. We then prove in Subsection~\ref{recolle} that $(y^*+\tilde y,p^*+\tilde p, \tilde v)$ is in some appropriate weighted Sobolev space (Proposition~\ref{cor-une-seule-composante}).
\item To conclude, in Section~\ref{IL}, we explain how the suitable functional setting we obtained for the solutions $(y,p,v)$ of System \eqref{sketch1} enables us to go back to the local null controllability of \eqref{NS} thanks to a usual argument of inverse mapping theorem.
\end{itemize}

\section{Constructing a relevant trajectory}
\label{secconstrbar}
In this subsection, we construct explicit particular trajectories $(\overline y,\overline p,\overline u)$
 going from $0$ to $0$ so that, as it will be shown in section \ref{seccontrollinearized}, the linearized control system around them
 is null controllable.

Without loss of generality we may assume that  $0\in \omega$. Let $g\in C^\infty (\R^3)$, $$g:(t,w,x_3)\mapsto g(t,w,x_3),$$
and $h\in C^\infty (\R^3)$, $$h:(t,w,x_3)\mapsto h(t,w,x_3).$$ For $(x_1,x_2,x_3)\in \R^3$, let $r:=\sqrt{x_1^2+x_2^2}$. We define $\overline y \in C^\infty(\R^4;\R^3)$ by
\begin{gather}
\label{defbary}
\overline y (t,x):=
\begin{pmatrix}
g(t,r^2,x_3)x_1\\
g(t,r^2,x_3)x_2\\
h(t,r^2,x_3)
\end{pmatrix}
, \forall t \in \R, \, \forall  x=(x_1,x_2,x_3)\in \R^3.
\end{gather}
 Let $r_1>0$
be small enough so that
\begin{gather}
\label{defC1+propC1}
\mathcal C_1:=\{(x_1,x_2,x_3)\in \R^3; r\leqslant r_1, |x_3|\leqslant r_1\}\subset \omega.
\end{gather}
On the functions $g$ and $h$, we also require that
\begin{gather}
\label{propsupportg}
\text{Supp}(g) \subset [T/4,T]\times (-\infty,r_1^2]\times[-r_1,r_1],
\\
\label{propsupporth}
\text{Supp}(h) \subset [T/4,T]\times(-\infty,r_1^2]\times[-r_1,r_1].
\end{gather}
In \eqref{propsupportg}, \eqref{propsupporth} and in the following, $\text{Supp}(f)$ denotes the support of the function $f$. From \eqref{defbary}, \eqref{defC1+propC1}, \eqref{propsupportg} and \eqref{propsupporth}, one obtains
\begin{gather}
\label{supportbary}
\text{Supp}(\overline y)
\subset [T/4,T]\times \mathcal C_1 \subset (0,T]\times \omega \subset (0,T]\times \Omega,
\end{gather}
which implies in particular that $\overline y$ has null trace on $\Sigma$.
Let $\widehat p \in  C^\infty (\R^3)$  be defined by
\begin{multline}
\label{eqdeftildepNS3}
\widehat p(t,w,x_3):= \frac{1}{2}\int_{w}^{r_1^2} \big(\partial_t g -(4w'\partial^2_{ww} g+8\partial_wg+\partial^2_{x_3x_3}g)
+2w'g\partial_wg+ g^2 \\+h\partial_{x_3}g\big)(t,w',x_3)dw'.
\end{multline}
Let $ \overline p  \in C^\infty (\R^4)$ be defined by
\begin{gather}
\label{eqdefbarpNS3}
\overline p(t,x_1,x_2,x_3):= \widehat p(t,r^2,x_3).
\end{gather}
 From  \eqref{defC1+propC1}, \eqref{propsupportg}, \eqref{propsupporth}, \eqref{eqdeftildepNS3} and  \eqref{eqdefbarpNS3}, it follows that
\begin{gather}
\label{supportbarpNS3D}
\text{Supp}(\overline p) \subset [T/4,T]\times \mathcal C_1 \subset [T/4,T]\times \omega \subset (0,T]\times \Omega.
\end{gather}
 From \eqref{defbary}, \eqref{eqdeftildepNS3} and  \eqref{eqdefbarpNS3}, one obtains
\begin{gather}
\label{eq1baryNS3D}
\overline y^1_t-\Delta \overline y^1 + (\overline y\cdot \nabla)\overline y^1+ \partial_{x_1}\overline p=0,
\\
\label{eq2baryNS3D}
\overline y^2_t-\Delta \overline y^2 + (\overline y\cdot \nabla)\overline y^2+  \partial_{x_2}\overline p=0.
\end{gather}
Let $\overline u \in C^\infty(\R^4)^3$ be defined by
\begin{gather}
\label{defbaruND3D}
\overline u :=(0,0,\overline y^3_t-\Delta \overline y^3+(\overline y\cdot \nabla )\overline y^3+ \partial_{x_3}\overline p).
\end{gather}
 From \eqref{defbaruND3D}, one obtains \eqref{cn}. From \eqref{supportbary}, \eqref{supportbarpNS3D} and \eqref{defbaruND3D}, we have

\begin{gather}
\label{supportbaru}
\text{Supp}(\overline u) \subset (0,T]\times \omega.
\end{gather}
 From \eqref{eq1baryNS3D}, \eqref{eq2baryNS3D}, \eqref{defbaruND3D} and \eqref{supportbaru}, we have
\begin{gather}
\label{barybarusolution}
\overline y_t-\Delta \overline y + (\overline y\cdot \nabla) \overline y+ \nabla \overline p=1_\omega \overline u.
\end{gather}
Finally, in order to have
\begin{gather}
\label{divbary=0}
\text{div }\overline y=0,
\end{gather}
it suffices to impose
\begin{gather}
\label{h3=}
\partial_{x_3} h=-2(g+w\partial_w g).
\end{gather}
Let $\nu$ be a positive numerical constant which will be chosen later. Let $a\in C^\infty(\R)$, $b\in C^\infty(\R)$ and $c\in C^\infty(\R)$ be such that
\begin{gather}
\label{propa}
\text{Supp}(a) \subset [T/4,T] \text{ and } a(t)=e^{\frac{-\nu}{(T-t)^5}} \text{ in } [T/2,T],
\\
\label{propb}
\text{Supp}(b) \subset (-\infty,r_1^2) \text{ and } b(w)= w, \, \forall s\in (-\infty,r_1^2/4],
\\
\label{propc}
\text{Supp}(c) \subset (-r_1,r_1) \text{ and } c(x_3) = x_3^2 \text{ in } [-r_1/2,r_1/2].
\end{gather}
We then set
\begin{gather}
\label{g=eabc}
g(t,w,x_3)=\varepsilon a(t)b(w)c'(x_3)
\end{gather}
and
\begin{gather}
\label{h=eabc}
h(t,w,x_3)=-2\varepsilon a(t)(b(w)+wb'(w))c(x_3),
\end{gather}
where $\epsilon>0$ (which will be chosen small enough later). From \eqref{g=eabc} and \eqref{h=eabc}, one obtains \eqref{h3=}.

In the next section, we prove that, for every small enough $T$, for every small enough $\varepsilon >0$ and for a well-chosen $\nu$, the linearized control system around
the trajectory $(\overline y,\overline p,\overline u)$ is controllable.

\section{A controllability result on the linearized system}
\label{seccontrollinearized}
\subsection{Definitions and notations}
\label{secmotivation-notations}
The linearized control system around the trajectory $(\overline y, \overline p, \overline u)$ is the linear control system
\begin{equation}
\label{linearizedbary}
\left\{\begin{aligned}
y^1_t-\Delta y^1+(\overline y\cdot \nabla)y^1 +(y\cdot \nabla){\overline y}^1+ \partial_{x_1}p&=f^1&\mbox{ in } Q,
\\
y^2_t-\Delta y^2+(\overline y\cdot \nabla)y^2 +(y\cdot \nabla){\overline y}^2+ \partial_{x_2}p&=f^2&\mbox{ in } Q,
\\
y^3_t-\Delta y^3+(\overline y\cdot \nabla)y^3 +(y\cdot \nabla){\overline y}^3+ \partial_{x_3}p&=1_\omega v+f^3&\mbox{ in } Q,
\\
\nabla \cdot y&=0&\mbox{ in }Q,
\\
y&= 0&\mbox{ on }\Sigma,
\end{aligned}\right .
\end{equation}
where the state is $y:Q\rightarrow \R^3$, $f:Q\rightarrow \R^3$ is a source term (it will be specified later in which space exactly it shall be) and the control is $v:Q\rightarrow \R$. 
In all what follows, in order to lighten the notations, we will write $\overline y$ as a function of $t$ and $x$ only, but one has to remember that $\overline y$ also depends on $\varepsilon$ and $\nu$.
Let $\omega_0$ be a nonempty open subset of
\begin{gather}
\label{defomega1}
\mathcal C_2:=\left\{(x_1,x_2,x_3); r<\frac{r_1}{2}, |x_3|<\frac{r_1}{2}\right\},
\end{gather}
which will be chosen more precisely in the next section.
Let $Q_0:=(T/2,T)\times \omega_0.$
The following figure summarizes the different roles of each open subset of $\Omega$ we introduced up to now.

\begin{figure}[!ht]
\begin{center}
\input{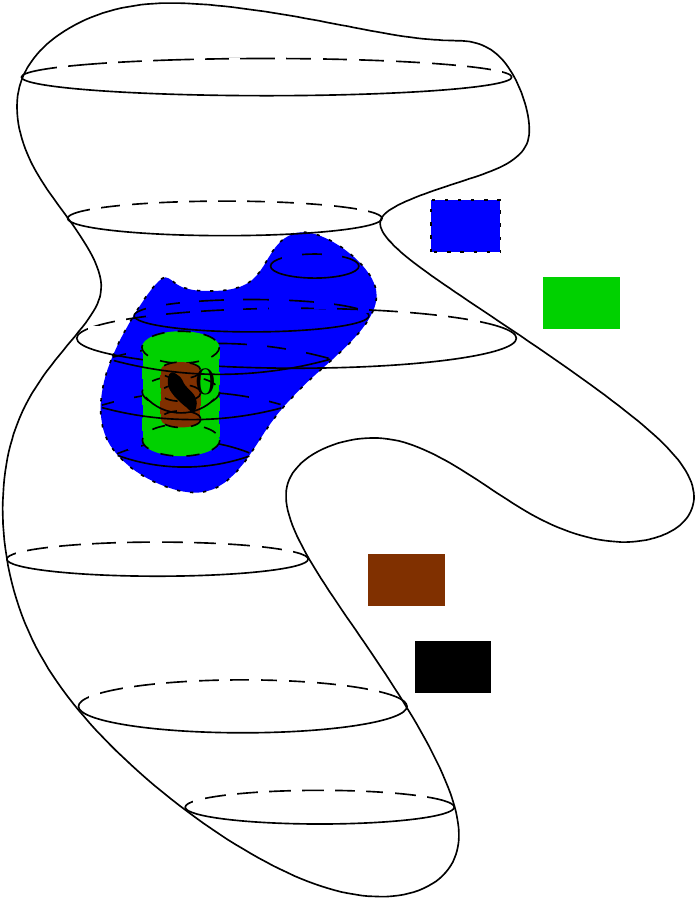_t}
\caption{The open subsets $\mathcal C_1,\mathcal C_2,\omega_0,\omega$.}
\end{center}
\end{figure}

Let $\mathcal{L}: C^{\infty}(Q_0)^5\rightarrow C^{\infty}(Q_0)^4$ be defined by
\begin{gather}
\label{defmathcalL}
\mathcal{L}
\begin{pmatrix}
y
\\
p
\\
v
\end{pmatrix}
:=
\begin{pmatrix}
y^1_t-\Delta y^1+(\overline y\cdot \nabla)y^1 +(y\cdot \nabla){\overline y}^1+ \partial_{x_1}p
\\
y^2_t-\Delta y^2+(\overline y\cdot \nabla)y^2 +(y\cdot \nabla){\overline y}^2+ \partial_{x_2}p
\\y^3_t-\Delta y^3+(\overline y\cdot \nabla)y^3 +(y\cdot \nabla){\overline y}^3+ \partial_{x_3}p-v
\\
\nabla \cdot y
\end{pmatrix}
,\end{gather}
for every $y=(y^1,y^2,y^3)\in C^{\infty}(Q_0)^3$, for every $p \in C^{\infty}(Q_0) $
and for every $v\in C^{\infty}(Q_0)$. Let us denote by $$\xi:=(x_0,x_1,x_2,x_3)=(t,x_1,x_2,x_3)=(t,x)$$ the current point in $Q_0$.
For $\alpha =(\alpha_0, \alpha_1,\alpha_2,\alpha_3)\in \mathbb{N}^4$
and $\varphi :Q_0\rightarrow \R^k$, $\partial^\alpha \varphi$, denotes,
as usual, $$\partial^{\alpha_0}_{t^{\alpha_0}}\partial^{\alpha_1}_{x_1^{\alpha_1}}
\partial^{\alpha_2}_{x_2^{\alpha_2}}\partial^{\alpha_3}_{x_3^{\alpha_3}}\varphi.$$
Let $\mathcal{L}(\R^k;\R^l)$ be the set of linear maps from $\R^k$ into $\R^l$ and $\mathcal{M}_{k,l}(\mathcal R)$ be the set of matrices of size $k\times l$ with values in the ring $\mathcal R$.

As usual, in the inequalities written in this article $C$ denotes a constant (depending in general only on $\omega$, $\Omega$, $T$) that may change from one line to another.

Let us give some other definitions.
\begin{Definition}
A linear map $\mathcal{M}: C^{\infty}(Q_0)^k\rightarrow C^{\infty}(Q_0)^l$ is called a \emph{linear partial differential operator of order $m$} if, for every
$\alpha = (\alpha_0, \alpha_1,\alpha_2,\alpha_3)\in \mathbb{N}^4$ with $|\alpha|:=\alpha_0+\alpha_1+\alpha_2+\alpha_3\leqslant m$, there exists
$A_\alpha \in C^\infty(Q_0; \mathcal{L}(\R^k;\R^l))$ such that
$$
(\mathcal{M}\varphi )(\xi)=\sum_{|\alpha|\leqslant m} A_\alpha (\xi)\partial^\alpha \varphi (\xi), \, \forall \xi \in  Q_0, \,
\forall \varphi \in C^{\infty}(Q_0)^k.
$$

A linear map $\mathcal{M}:C^{\infty}(Q_0)^k\rightarrow C^{\infty}(Q_0)^l$ is called a
\emph{linear partial differential operator} if there exists $m\in \mathbb{N}$ such that $\mathcal{M}$ is a linear partial differential operator of order $m$.
\end{Definition}

Let $k$ be a positive integer and
let
$\mathcal{B}:=(\mathcal{B}^1,\mathcal{B}^2,\mathcal{B}^3): C^{\infty}(Q_0)^k\rightarrow C^{\infty}(Q_0)^3$ be a linear
partial differential operator. Let us consider the linear equation
\begin{gather}
\label{equationypuf}
\mathcal{L}\begin{pmatrix}
y
\\
p
\\
v
\end{pmatrix}
=
\begin{pmatrix}
\mathcal{B}^1 u
\\
\mathcal{B}^2 u
\\
\mathcal{B}^3 u
\\
0
\end{pmatrix}
,
\end{gather}
where the data is $u\in C^{\infty}(Q_0)^k$ and the unknown is $(y,p,v)\in C^{\infty}(Q_0)^5$.
Following \cite[p.~148]{Gromovbook}, we adopt the following definition.
\begin{Definition}
\label{defialgebraicsolv}
The linear equation \eqref{equationypuf}
is \emph{algebraically solvable} if there  exists a linear
partial differential operator $\mathcal{M}:C^{\infty}(Q_0)^k\rightarrow C^{\infty}(Q_0)^5$ such that, for every $u\in C^{\infty}(Q_0)^k$, $\mathcal{M}u$ is a solution of \eqref{equationypuf}, i.e. such that
\begin{gather}
\label{LMB}
\mathcal L \circ \mathcal M=(\mathcal B,0).
\end{gather}
\end{Definition}
In the following, every function $\varphi \in C^{\infty}(Q_0)^l$ with a compact support included in $Q_0$ is extended by $0$ in $Q\setminus Q_0$ and we still denote this extension by $\varphi$.

The next proposition explains how the notion of  ``algebraic solvability'' can be useful to reduce the number of controls as soon as a controllability result is already known for a large number of controls. In fact, the question of the null-controllability of \eqref{linearizedbary} can be split up into two distinct problems: One ``algebraic'' part (solving system\eqref{LMB}) and one ``analytic'' part (finding controls which are in the image of $\mathcal B$, the control acting possibly on all the equations and not only on the third one). This proposition has a very general scope and could be formulated for more general control systems. It is inspired by techniques used in the control of ordinary differential equations (see, in particular, \cite[Chapter 1, pages 13-15]{MR2302744}).
\begin{Proposition}
\label{prop-alg-controllability}
Let us consider the linear control system
\begin{gather}
\label{linearizedbary-Bf-Cauchy}
\left\{\begin{aligned}
y^1_t-\Delta y^1+(\overline y\cdot \nabla)y^1 +(y\cdot \nabla){\overline y}^1+ \partial_{x_1}p&=\mathcal{B}^1u+f^1&\mbox{ in } Q,
\\
y^2_t-\Delta y^2+(\overline y\cdot \nabla)y^2 +(y\cdot \nabla){\overline y}^2+ \partial_{x_2}p&=\mathcal{B}^2u+f^2&\mbox{ in } Q,
\\
y^3_t-\Delta y^3+(\overline y\cdot \nabla)y^3 +(y\cdot \nabla){\overline y}^3+ \partial_{x_3}p&=\mathcal{B}^3u+f^3&\mbox{ in } Q,
\\
\nabla \cdot y&=0&\mbox{ in }Q,
\\
y&= 0&\mbox{ on }\Sigma,
\\
y(0,\cdot)&=y^0 &\mbox{ in } \Omega,
\end{aligned}\right .
\end{gather}
where the state is $y:Q\rightarrow \R^3$, the control is $u\in C^{\infty}(Q)^k$, which is
required to have a support in $Q_0$, and $f:=(f^1,f^2,f^3)\in C^\infty(Q)^3$ is a source term. Let us assume that:
\begin{itemize}
 \item[\hypertarget{assu1}{$\mathcal{A}_1$.}]
 The linear control system \eqref{linearizedbary-Bf-Cauchy} is null controllable during the interval of time
$[0,T]$ in the sense that for every
$y^0\in V$ and  for every $f\in C^\infty(Q)$ such that
\begin{gather}
\label{vanispresT}
\text{there exists $\delta>0$ such that $f=0$ on $[T-\delta,T]\times \Omega$},
\end{gather}
there exists $u\in C^{\infty}(Q)^k$ with a compact support included in $Q_0$
such that the solution $(y,p)$ of \eqref{linearizedbary-Bf-Cauchy}  with initial
condition $y(0,\cdot)=y^0$ satisfies $y(T,\cdot)=0$.
\item [\hypertarget{assu2}{$\mathcal{A}_2.$}]
\begin{gather*}
\text{\eqref{equationypuf} is algebraically solvable.}
\end{gather*}
\end{itemize}
Then, the linear control system \eqref{linearizedbary}
is null controllable during the interval of time
  $[0,T]$: For every
$y^0\in V$ and for every $f\in C^\infty(Q)$ satisfying \eqref{vanispresT}, there exists  $v\in C^{\infty}(Q)$ with a compact support included in $Q_0$ such that
the the solution $(y,p)$ of \eqref{linearizedbary} with 
initial condition $y(0,\cdot)=y^0$ satisfies $y(T,\cdot)=0$.
\end{Proposition}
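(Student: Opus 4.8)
The plan is to combine the two hypotheses in the obvious way: use $\mathcal{A}_1$ to produce a control $u$ acting on all three equations that drives $y^0$ to rest, and then use $\mathcal{A}_2$ to convert the ``extra'' forcing $\mathcal{B}u$ into a genuine scalar control supported in $Q_0$ by subtracting off an algebraically constructed corrector. Concretely, given $y^0\in V$ and $f\in C^\infty(Q)$ satisfying \eqref{vanispresT}, I would first apply $\mathcal{A}_1$ to obtain $u\in C^\infty(Q)^k$ with compact support in $Q_0$ such that the solution $(y^*,p^*)$ of \eqref{linearizedbary-Bf-Cauchy} with this $u$ and this $f$, and with $y^*(0,\cdot)=y^0$, satisfies $y^*(T,\cdot)=0$.

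Next, let $\mathcal{M}:C^\infty(Q_0)^k\to C^\infty(Q_0)^5$ be the linear partial differential operator provided by $\mathcal{A}_2$, so that $\mathcal{L}\circ\mathcal{M}=(\mathcal{B},0)$, and write $\mathcal{M}u=(\tilde y,\tilde p,\tilde v)\in C^\infty(Q_0)^5$. Since $u$ has compact support in $Q_0$ and $\mathcal{M}$ is a differential operator, $(\tilde y,\tilde p,\tilde v)$ also has compact support in $Q_0$; extend it by $0$ to all of $Q$ as in the convention preceding the proposition. By construction, on $Q_0$ one has $\mathcal{L}(\tilde y,\tilde p,\tilde v)=(\mathcal{B}^1u,\mathcal{B}^2u,\mathcal{B}^3u,0)$, and because everything vanishes outside $Q_0$ this identity in fact holds on all of $Q$; moreover $\tilde y=0$ near $\Sigma$ and $\tilde y(0,\cdot)=0$ since $Q_0\subset(T/2,T)\times\Omega$. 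Now set
\begin{gather*}
y:=y^*-\tilde y,\qquad p:=p^*-\tilde p,\qquad v:=1_\omega v^*-\tilde v,
\end{gather*}
where $v^*$ is the (zero) scalar control appearing in \eqref{linearizedbary-Bf-Cauchy} — more precisely, subtracting the equation $\mathcal{L}(\tilde y,\tilde p,\tilde v)=(\mathcal{B}u,0)$ from the system satisfied by $(y^*,p^*)$ kills the $\mathcal{B}u$ terms on the right-hand side and leaves exactly the system \eqref{linearizedbary} for $(y,p)$ with scalar control $v=-\tilde v$ (supported in $Q_0\subset(0,T)\times\omega$) and the same source $f$. The boundary condition $y=0$ on $\Sigma$ holds since $\tilde y$ vanishes near $\partial\Omega$; the initial condition $y(0,\cdot)=y^0$ holds since $\tilde y(0,\cdot)=0$; and $y(T,\cdot)=y^*(T,\cdot)-\tilde y(T,\cdot)=0-0=0$, using $y^*(T,\cdot)=0$ from $\mathcal{A}_1$. (One should double-check that $\tilde y(T,\cdot)=0$: this needs the constructed corrector to vanish at the final time, which is why the later sections arrange $u^*$, hence $\mathcal{M}u^*$, to decay exponentially as $t\to T$; at the level of this abstract proposition it suffices that $u$ has compact support in $Q_0=(T/2,T)\times\omega_0$ and that $\mathcal{M}$ is a differential operator, so that $\mathcal{M}u$ also has compact support in $Q_0$ and in particular vanishes in a neighborhood of $t=T$.)

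The only genuinely delicate point is bookkeeping rather than analysis: one must make sure that the differential operator $\mathcal{M}$, which is defined only on $C^\infty(Q_0)$, is applied to a function $u$ that genuinely has compact support strictly inside $Q_0$, so that $\mathcal{M}u$ extends by zero to a $C^\infty(Q)$ function and the equation $\mathcal{L}(\tilde y,\tilde p,\tilde v)=(\mathcal{B}u,0)$ propagates from $Q_0$ to $Q$ without boundary terms; this is exactly guaranteed by the ``compact support included in $Q_0$'' clause in $\mathcal{A}_1$ together with the extension-by-zero convention. Everything else is a one-line linear superposition. I expect no real obstacle here — the content of the paper is entirely in verifying $\mathcal{A}_1$ (the Carleman estimates of Section~\ref{seccontrollinearized}) and $\mathcal{A}_2$ (the algebraic solvability of Section~\ref{secalgebraicsolvability}); this proposition is the soft glue that shows those two ingredients suffice.
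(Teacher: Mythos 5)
Your proposal is correct and follows essentially the same route as the paper's proof: apply $\mathcal{A}_1$ to get $u$ with compact support in $Q_0$ driving $y^0$ to rest via $\mathcal{B}u$, apply $\mathcal{M}$ from $\mathcal{A}_2$ to $u$, extend by zero, and superpose (the paper writes $(\tilde y,\tilde p,\tilde v):=-\mathcal{M}u^*$ and adds where you subtract $\mathcal{M}u$, a sign convention only). Your parenthetical worry about $\tilde y(T,\cdot)=0$ is resolved exactly as you say — compact support of $u$ in $Q_0$ plus locality of $\mathcal{M}$ — which is also how the paper handles it.
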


\textbf{Proof of Proposition~\ref{prop-alg-controllability}.}

First of all, we use the null-controllability of \eqref{linearizedbary-Bf-Cauchy} with controls in the image of $\mathcal B$ (Assumption \hyperlink{assu1}{$\mathcal{A}_1$}) with source term $f$:
 let $y^0\in V$  and let
 $u^*\in C^{\infty}(Q_0)^k$ with compact supported included in some open subset $Q^*\subset\subset Q_0$ included in $Q_0$ such that the solution $(y^*,p^*)$ of the following equation:
\begin{gather}
\label{linearizedbary-Bf-Cauchy-*}
\left\{\begin{aligned}
y^{*1}_t-\Delta y^{*1}+(\overline y\cdot \nabla)y^{*1} +(y^*\cdot \nabla){\overline y}^1+ \partial_{x_1}p&=\mathcal{B}^1u^*+f^1&\mbox{ in } Q,
\\
y^{*2}_t-\Delta y^{*2}+(\overline y\cdot \nabla)y^{*2} +(y^*\cdot \nabla){\overline y}^2+ \partial_{x_2}p&=\mathcal{B}^2u^*+f^2&\mbox{ in } Q,
\\
y^{*3}_t-\Delta y^{*3}+(\overline y\cdot \nabla)y^{*3} +(y^*\cdot \nabla){\overline y}^3+ \partial_{x_3}p&=\mathcal{B}^3u^*+f^3&\mbox{ in } Q,
\\
\nabla \cdot y^*&=0&\mbox{ in }Q,
\\
y^*&= 0&\mbox{ on }\Sigma,
\end{aligned}\right .
\end{gather}
with initial condition $y^*(0,\cdot)=y^0$ satisfies $y^*(T,\cdot)=0$. Let us remark that $\mathcal B$ is a local operator, which implies that $\mathcal{B}u^*_{|Q_0}$ still has a compact support included in $Q^*$. Now we use the algebraic solvability of \eqref{equationypuf} (Assumption \hyperlink{assu2}{$\mathcal{A}_2$}). Let $\mathcal{M} $ be as in Definition~\ref{defialgebraicsolv}. For a map $h\in C^\infty(Q)^k$ with a support included in $Q_0$, we denote by $\mathcal{M}$ the map from $Q$ into $\R^5$ defined by
 \begin{gather*}
\mathcal{M}h= 0 \text{ in } Q\setminus Q_0, \mathcal{M}h=\mathcal{M}(h_{|Q_0}) \text{ in } Q_0.
\end{gather*}
We shall use this slight abuse of notation until the end of the paper. Note that, for every $h\in C^\infty(Q)^k$
with a support included in $Q_0$, $\mathcal{M}h \in C^\infty(Q)^5$ and has a support included in $Q_0$ (because $\mathcal M$ is a local operator). Let us call
$$(\tilde y,\tilde p, \tilde v):=-\mathcal{M}u^*,$$
so that $(\tilde y,\tilde p,\tilde v)$ verifies the following linearized Navier-Stokes equation:
\begin{gather}
\label{algebr1}
\left\{\begin{aligned}
\tilde y^1_t-\Delta \tilde y^1+(\overline y\cdot \nabla)\tilde y^1 +(\tilde y\cdot \nabla){\overline y}^1+
 \partial_{x_1}\tilde p&=-\mathcal{B}^1u^*&\mbox{ in } Q,
\\
\tilde y^2_t-\Delta \tilde y^2+(\overline y\cdot \nabla)\tilde y^2 +(\tilde y\cdot \nabla){\overline y}^2+ \partial_{x_2}\tilde p&=-\mathcal{B}^2u^*&\mbox{ in } Q,
\\
\tilde y^3_t-\Delta \tilde y^3+(\overline y\cdot \nabla)\tilde y^3 +(\tilde y\cdot \nabla){\overline y}^3+ \partial_{x_3}\tilde p&=\tilde v -\mathcal{B}^3u^*&\mbox{ in } Q,
\\
\nabla \cdot \tilde y&=0&\mbox{ in }Q,
\\
\tilde y&= 0&\mbox{ on }\Sigma.
\\
\end{aligned}\right .
\end{gather}
 One observes that the support of $(\tilde y,\tilde p, \tilde v)$ is still included in $Q^*$ (which is strongly included in $Q_0$). In particular $\tilde{y}(0,\cdot)=0$ and $\tilde{y}(T,\cdot )= 0$. Let $$(y,p,v):=(y^*+\tilde y,p^*+\tilde p,\tilde v).$$ Note that $(y,p)$ is different from $(y^*,p^*)$ only  on $Q^*$. In particular one has $y(0,\cdot)=y^0$ and $y(T,\cdot)=0$. Moreover, from \eqref{linearizedbary-Bf-Cauchy-*} and \eqref{algebr1}, we obtain that $(y,p,v)$ verifies the equation
\begin{gather*}
\left\{\begin{aligned}
y^1_t-\Delta y^1+(\overline y\cdot \nabla)y^1 +(y\cdot \nabla){\overline y}^1+ \partial_{x_1}p&=f^1&\mbox{ in } Q,
\\
y^2_t-\Delta y^2+(\overline y\cdot \nabla)y^2 +(y\cdot \nabla){\overline y}^2+ \partial_{x_2}p&=f^2&\mbox{ in } Q,
\\
y^3_t-\Delta y^3+(\overline y\cdot \nabla)y^3 +(y\cdot \nabla){\overline y}^3+ \partial_{x_3}p&= 1_{\omega^*} v +f^3&\mbox{ in } Q,
\\
\nabla \cdot y&=0&\mbox{ in }Q,
\\
y&= 0&\mbox{ on }\Sigma,
\end{aligned}\right.
\end{gather*}
 which shows that the linear control system \eqref{linearizedbary} is indeed null controllable during the interval of time
$[0,T]$ and concludes
the proof of Proposition~\ref{prop-alg-controllability}. \cqfd
\begin{Remark}
\label{not-so-regular}For the sake of simplicity, we have formulated Proposition~\ref{prop-alg-controllability} in a $C^\infty$ setting. Let us assume that the control $u$ coming from Assumption \hyperlink{assu1}{$\mathcal{A}_1$} is not of class $C^\infty$, but is less regular (one sees that the regularities of $y^*$, $p^*$ and $f$ does not matter for the proof of Proposition~\ref{prop-alg-controllability} since only $u^*$ is differentiated by the linear partial differential operator $\mathcal M$). For example, assume that $u^*\in H_1$ where $H_1$ is a functional space (for example a weighted Sobolev space), and assume that $\mathcal M$ can be extended on $H_1$, $\mathcal M u^*$ being then in another functional space $H_2$ (for example another weighted Sobolev space of order less that $H_1$ in order to take into account that $\mathcal M$ is a linear partial differential operator). Then one easily verifies that Proposition~\ref{prop-alg-controllability} remains true as soon as every function of $H_1$ (and its derivatives until the order at least the order of $\mathcal M$) vanishes at time $t=T$, the first Assumption \hyperlink{assu1}{$\mathcal{A}_1$} being changed as the following:
the linear control system \eqref{linearizedbary-Bf-Cauchy} is null controllable during the interval of time
  $[0,T]$, i.e. for every
$y^0\in V$ and for every $f\in L^2(Q)$ satisfying \eqref{vanispresT} there exists
$u\in H_1$ with support included in $Q_0$
such that the solution $(y,p)$ of \eqref{linearizedbary-Bf-Cauchy} satisfying the
initial condition $y(0,\cdot)=y^0$ satisfies $y(T,\cdot)=0$. Note that the scalar control $v$ is now only in $H_2$. Similarly, we will need to relax property \eqref{vanispresT} by replacing it with a suitable decay rate near $t=T$.
This will be detailed in Subsection~\ref{recolle-morceaux}.
\end{Remark}

It remains to deal, for a suitable choice of $\mathcal{B}$, with Assumption \hyperlink{assu2}{$\mathcal{A}_2$} (we shall
do it in Subsection~\ref{secalgebraicsolvability}) and with Assumption \hyperlink{assu1}{$\mathcal{A}_1$}, i.e. with the null controllability
of the linear control system \eqref{linearizedbary-Bf-Cauchy} in suitable spaces
(we shall do it in Subsection~\ref{seccontrollabilitywithB}).

\subsection{Algebraic solvability of \texorpdfstring{\eqref{equationypuf}}{}}
\label{secalgebraicsolvability} We choose $k=7$ and define $\mathcal{B}$ by
\begin{gather}
\label{defmathcalB}
\mathcal{B}(f^1,f^2,f^3,f^4,f^5,f^6,f^7):=
\begin{pmatrix}
 \partial_{x_1}f^1+ \partial_{x_2}f^2+ \partial_{x_3}f^3
\\
 \partial_{x_1}f^4+ \partial_{x_2}f^5+ \partial_{x_3}f^6
\\
f^7
\end{pmatrix}
.
\end{gather}
The main result of this subsection is the following proposition.
\begin{Proposition}
\label{prop-solvable}There exists $\varepsilon^*>0$, there exists $T^*>0$ such that, for every $\varepsilon\in (0,\varepsilon^*)$, there exists a nonempty
open subset $\omega_0$ of $\mathcal C_2$ such that Assumption \hyperlink{assu2}{$\mathcal{A}_2$} holds for every $T<T^*$: There exists a
linear partial differential operator $\mathcal{M}:C^{\infty}(Q_0)^7\rightarrow C^{\infty}(Q_0)^5$ such that \eqref{LMB} holds.
\end{Proposition}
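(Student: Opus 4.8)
\textbf{Proof strategy for Proposition~\ref{prop-solvable}.}

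The plan is to reduce the algebraic solvability of \eqref{equationypuf} to an \emph{overdetermined} linear PDE problem on a small cylinder $Q_0\subset\mathcal C_2$ and then to solve that problem by differentiating the equations enough times so that the number of (algebraically independent) equations exceeds the number of unknowns, exactly along the lines sketched in Paragraphs~\ref{subsub-adj}--\ref{subsub-over}. The starting point is that algebraic solvability of \eqref{equationypuf} is, by the structure of linear partial differential operators, equivalent to a solvability property for a certain \emph{adjoint}/transposed operator. Concretely, with the choice \eqref{defmathcalB} of $\mathcal B$, one writes $\mathcal L$ and $\mathcal B$ as formal differential operators, takes formal adjoints, and observes (this is the content of Paragraph~\ref{subsub-adj}) that the existence of a left inverse $\mathcal M$ with $\mathcal L\circ\mathcal M=(\mathcal B,0)$ follows once one produces a linear partial differential operator $\mathcal N:C^\infty(Q_0)^4\to C^\infty(Q_0)^6$ recovering $(-\partial_{x_1}z^1,\dots,-\partial_{x_3}z^2)$ from $\varphi$ whenever $(z^1,z^2,\pi)$ solves \eqref{sketch4}. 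In other words, the whole proposition is reduced to: the overdetermined system \eqref{sketch4} in the six ``interesting'' unknowns $\partial_{x_j}z^i$ (together with $\pi$ and the higher derivatives of $z^1,z^2$) admits a differential left inverse expressing those six quantities through $\varphi$ and its derivatives.

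Next I would carry out the \emph{prolongation/counting} step (Paragraph~\ref{denomb}). Treating $z^1$, $z^2$ and all their partial derivatives up to some order as independent algebraic unknowns — forgetting the differential relations among them — System~\eqref{sketch4} is a linear system with coefficients depending smoothly on $\xi\in Q_0$; its coefficients are, on $\mathcal C_2$, polynomials in $x$ because $\overline y$ is polynomial there (this is why the construction in Section~\ref{secconstrbar} was arranged so that $g,h$ are polynomial on $\mathcal C_2$). Differentiating the first two equations of \eqref{sketch4} up to order $19$ and the third up to order $21$ produces, by a direct (machine) count, $30360$ scalar equations in the $29900$ unknowns formed by the derivatives of $z^1,z^2$ up to order $22$; schematically $L_0(t,x)Z=\Phi$ with $L_0\in C^\infty(Q_0;\mathcal M_{30360\times 29900}(\R))$. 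The goal is to exhibit a $29900\times 29900$ submatrix $P$ of $L_0$ that is invertible on a nonempty open subset of $\mathcal C_2$; then $P^{-1}$, read off as a differential operator acting on the relevant components of $\Phi$ (i.e. on $\varphi$ and its derivatives), supplies the six expressions $\partial_{x_j}z^i$ in terms of $\varphi$, hence $\mathcal N$ and hence $\mathcal M$. By Cramer's rule the entries of $P^{-1}$ are rational in the (polynomial) entries of $L_0$ with denominator $\det P$, so $\mathcal N$ is a genuine linear partial differential operator on the open set where $\det P\neq0$; this also explains why $u^*$ must be taken very regular (it gets differentiated up to order $\sim22$), and why, since $u^*$ decays exponentially at $t=T$, so does $(\tilde y,\tilde p,\tilde v)=-\mathcal M u^*$.

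For the \emph{invertibility of $P$} I would proceed as in Paragraph~\ref{subsub-over}: first reduce, via Lemma~\ref{lemmalowerP}, to checking invertibility of $L_0$ (more precisely of a well-chosen square submatrix) at a single, explicitly chosen point $\xi^0\in Q_0$ — a nonvanishing analytic/polynomial determinant at one point forces it to be nonzero on a neighbourhood, which one then takes as $\omega_0$ — and then locate a suitable square block by applying the Dulmage--Mendelsohn decomposition to the (large, sparse) numerical matrix $L_0(\xi^0)$, as in Lemma~\ref{lemm-good-reordering}. The whole computation is symbolic, so no rounding is involved and the resulting $P(\xi^0)$ is certified invertible. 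The parameters $\varepsilon^*$ and $T^*$ enter only through the construction of $\overline y$ in Section~\ref{secconstrbar} (they guarantee $\overline y$ is the explicit polynomial trajectory on $\mathcal C_2$, of the required size); for every $\varepsilon\in(0,\varepsilon^*)$ the coefficients of \eqref{sketch4} on $\mathcal C_2$ are fixed polynomials, the point $\xi^0$ and block $P$ can be chosen uniformly, and the resulting $\omega_0$ and $\mathcal M$ work for all $T<T^*$. The main obstacle is precisely this last step: one must \emph{guess the right prolongation orders} ($19$, $19$, $21$) so that equations outnumber unknowns, and then actually \emph{find} an invertible square submatrix inside a $30360\times29900$ sparse polynomial matrix — this is infeasible by hand and is why the proof relies on a computer-assisted search (Dulmage--Mendelsohn reordering plus an exact rank/determinant computation at $\xi^0$); verifying that the chosen $\xi^0$ makes $\det P(\xi^0)\neq0$ is the crux, everything else being bookkeeping about formal adjoints and the ring structure of differential operators.
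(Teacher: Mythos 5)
Your overall strategy (pass to the adjoint system, eliminate $\pi$, prolong by differentiation until equations outnumber the algebraic unknowns, and certify an invertible block by a symbolic computation at one point) is indeed the paper's strategy, but the central step as you state it would fail. You propose to ``exhibit a $29900\times 29900$ submatrix $P$ of $L_0$ that is invertible'' and to read off all the unknowns from $P^{-1}$. No such submatrix exists: after discarding the $140$ identically zero columns, the remaining $30360\times 29760$ matrix has structural rank $28654<29760$, so $L_0$ is not of full column rank even generically and one cannot solve for all the derivatives of $z^1,z^2$. The idea you are missing is that one only needs the six quantities $\partial_{x_j}z^i$, and that it therefore suffices to find row and column permutations putting $L_0$ in the block form $\bigl(\begin{smallmatrix}P&0\\ Q&R\end{smallmatrix}\bigr)$ where $P$ is a comparatively small ($7321\times 7321$) invertible square block whose columns contain the six target unknowns: thanks to the zero block, the corresponding $7321$ equations involve no other unknowns, so $P^{-1}$ applied to them yields $\mathcal N$. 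Locating such a $P$ via the Dulmage--Mendelsohn decomposition is precisely the content of Lemma~\ref{lemm-good-reordering}; it is the replacement for, not a means toward, a full-rank inversion of $L_0$.

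A second gap concerns how $\varepsilon^*$, $T^*$ and $\omega_0$ arise. You assert that $\varepsilon^*$ and $T^*$ ``enter only through the construction of $\overline y$'' and that nonvanishing of the determinant at $\xi^0$ gives a neighbourhood that ``one then takes as $\omega_0$''. But the evaluation point is $\xi^0=(e^0,s^0,x^0)$ with $e^0=0$, where $e=1/(T-t)$; this value is never attained in $Q_0$ (there $e\geqslant 2/T$ is large), and $x^0=(1.1,1.2,1.3)$ need not lie in $\mathcal C_2$, so continuity at $\xi^0$ gives nothing directly. The actual argument (Lemma~\ref{lemmalowerP}) factors $\det P=S^m\tilde P$ with $\tilde P(E,0,X)$ a nonzero polynomial, chooses $\omega_0\subset\mathcal C_2$ and a lower bound for $|\tilde P(e,0,x)|$ valid for all large $e$, and then absorbs the perturbation in $s=\varepsilon a(t)$ using that $s$ is small when $\varepsilon$ is small: this asymptotic step is exactly where the smallness conditions $\varepsilon<\varepsilon^*$ and $T<T^*$ come from. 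Finally, one must also check that the block-triangular structure found numerically at $\xi^0$ persists for the polynomial matrix $L_0$ itself (an entry vanishing at $\xi^0$ need not be the zero polynomial, and such an entry landing in the zero block would ruin the decomposition); the paper verifies this explicitly, and your write-up should too.
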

\subsubsection{The adjoint problem}
\label{subsub-adj}
Let $\mathcal{L}_0: C^{\infty}(Q_0)^4\rightarrow C^{\infty}(Q_0)^3$
be the linear partial differential operator defined by
\begin{gather}
\label{defmathcalL0}
\mathcal{L}_0
\begin{pmatrix}
y
\\
p
\end{pmatrix}
:=
\begin{pmatrix}
y^1_t-\Delta y^1+(\overline y\cdot \nabla)y^1 +(y\cdot \nabla){\overline y}^1+ \partial_{x_1}p
\\
y^2_t-\Delta y^2+(\overline y\cdot \nabla)y^2 +(y\cdot \nabla){\overline y}^2+ \partial_{x_2}p
\\
\nabla \cdot y
\end{pmatrix}
,
\end{gather}
for every $y=(y^1,y^2,y^3)\in C^{\infty}(Q_0)^3$, and every $p\in C^{\infty}(Q_0)$.

Let $\mathcal{B}_0:C^{\infty}(Q_0)^6\rightarrow C^{\infty}(Q_0)^3$ be
the linear partial differential operator defined by
\begin{gather}
\label{deftildeB}
\
\mathcal{B}_0(f^1,f^2,f^3,f^4,f^5,f^6):=
\begin{pmatrix}
 \partial_{x_1}f^1+ \partial_{x_2}f^2+ \partial_{x_3}f^3
\\
 \partial_{x_1}f^4+ \partial_{x_2}f^5+ \partial_{x_3}f^6
\\
0
\end{pmatrix}
.
\end{gather}
Note that the third equation of \eqref{equationypuf} can be read as
$$
v=y^3_t-\Delta y^3+(\overline y\cdot \nabla)y^3 +(y\cdot \nabla){\overline y}^3+\partial_3p-f^7.
$$
Hence, one easily sees that Assumption \hyperlink{assu2}{$\mathcal{A}_2$}
 is equivalent to the existence
of a linear partial differential operator $\mathcal{M}_0: C^{\infty}(Q_0)^6\rightarrow C^{\infty}(Q_0)^4$ such that
\begin{gather}
\label{eqL0M0}
\mathcal{L}_0\circ \mathcal{M}_0=\mathcal{B}_0.
\end{gather}

As in \cite[p.~157]{Gromovbook}, we study \eqref{eqL0M0} by looking at the ``adjoint equation''.
For every linear partial differential operator
$\mathcal{M}: C^{\infty}(Q_0)^k\rightarrow C^{\infty}(Q_0)^l$, $\mathcal{M}=\sum_{|\alpha|\leqslant m} A_\alpha \partial^\alpha$, we associate its (formal) adjoint
$$\mathcal{M}^*: C^{\infty}(Q_0)^l\rightarrow C^{\infty}(Q_0)^k$$
 defined
 by
\begin{gather}
\label{defadjointformel}
\mathcal{M}^* \psi := \sum_{|\alpha|\leqslant m}(-1)^{|\alpha|} \partial^\alpha (A_\alpha ^T \psi), \, \forall \psi \in C^{\infty}(Q_0)^l,
\end{gather}
where $A_\alpha^T (\xi)$ is the transpose of the matrix $A_\alpha(\xi)$. (Definition \eqref{defadjointformel} makes sense since $$\sum_{|\alpha|\leqslant m} A_\alpha \partial^\alpha=0$$ implies that the
$A_\alpha$ are all equal to $0$.)
One has $\mathcal{M}^{**}=\mathcal{M}$ and, if $\mathcal{M}: C^{\infty}(Q_0)^k\rightarrow C^{\infty}(Q_0)^l$ and
$\mathcal{N}: C^{\infty}(Q_0)^l\rightarrow C^{\infty}(Q_0)^m$ are two linear partial differential operators, then
$ (\mathcal{N} \circ \mathcal{M} )^*=\mathcal{M}^*\circ \mathcal{N}^*$.

Hence, \eqref{eqL0M0} is equivalent to
\begin{gather}
\label{eqM0adjL0adj}
\mathcal{M}_0^*\circ \mathcal{L}_0^*=\mathcal{B}_0^*.
\end{gather}
Direct computations, together with \eqref{divbary=0}, show that, for every $z=(z^1,z^2)\in C^{\infty}(Q_0)^2$
and for every $\pi \in C^{\infty}(Q_0)$,
\begin{gather}
\label{calculLadjointL0}
\mathcal{L}^*_0
\begin{pmatrix}
z
\\
\pi
\end{pmatrix}
=
\begin{pmatrix}
-z^1_t-\Delta z^1-(\overline y\cdot \nabla)z^1+\partial_{x_1}\overline y^1 z^1+\partial_{x_1}\overline y^2 z^2-\partial_{x_1}\pi
\\
-z^2_t-\Delta z^2-(\overline y\cdot \nabla)z^2+\partial_{x_2}\overline y^1 z^1+\partial_{x_2}\overline y^2 z^2-\partial_{x_2}\pi
\\
\partial_{x_3}\overline y^1 z^1+\partial_{x_3}\overline y^2 z^2-\partial_{x_3}\pi
\\
-\partial_{x_1}z^1-\partial_{x_2}z^2
\end{pmatrix}
,
\\
\label{calculLadjointB0}
\mathcal{B}_0^*
\begin{pmatrix}
z
\\
\pi
\end{pmatrix}
=(-\partial_{x_1}z^1,-\partial_{x_2}z^1,-\partial_{x_3}z^1,-\partial_{x_1}z^2,-\partial_{x_2}z^2,-\partial_{x_3}z^2).
\end{gather}
Assumption \hyperlink{assu2}{$\mathcal{A}_2$} is now equivalent to the following property: There exists
a linear partial differential operator $\mathcal{N} (=\mathcal{M}_0^*):C^{\infty}(Q_0)^4
\rightarrow C^{\infty}(Q_0)^6$ such that
for every $\varphi=(\varphi^1,\varphi^2,\varphi^3,\varphi^4)\in C^{\infty}(Q_0)^4$, if
$(z^1,z^2,\pi)\in C^{\infty}(Q_0)^3$ is a solution of
\begin{gather}
\label{eqaresoudre}
\left\{
\begin{array}{l}
-z^1_t-\Delta z^1-(\overline y\cdot \nabla)z^1+\partial_{x_1}\overline y^1 z^1+\partial_{x_1}\overline y^2 z^2-\partial_{x_1}\pi= \varphi^1,
\\
-z^2_t-\Delta z^2-(\overline y\cdot \nabla)z^2+\partial_{x_2}\overline y^1 z^1+\partial_{x_2}\overline y^2 z^2-\partial_{x_2}\pi=\varphi^2,
\\
\partial_{x_3}\overline y^1 z^1+\partial_{x_3}\overline y^2 z^2-\partial_{x_3}\pi=\varphi^3,
\\
-\partial_{x_1}z^1-\partial_{x_2}z^2=\varphi^4,
\end{array}
\right.
\end{gather}
then $(-\partial_{x_1}z^1,-\partial_{x_2}z^1,-\partial_{x_3}z^1,-\partial_{x_1}z^2,-\partial_{x_2}z^2,-\partial_{x_3}z^2)=\mathcal{N} \varphi$.

\begin{Remark}
\label{remarkpasresoluble}
The most natural linear partial differential operator $\mathcal{B}$ to try first would have been
$\mathcal{B} : C^{\infty}(Q_0)^3\rightarrow C^{\infty}(Q_0)^3$ defined by
\begin{gather}
\label{naturalB}
\mathcal{B}f:=
\begin{pmatrix}
f^1
\\
f^2
\\
f^3
\end{pmatrix}
,\, \forall f =(f^1,f^2,f^3)\in C^{\infty}(Q_0)^3.
\end{gather}
Unfortunately, Proposition~\ref{prop-solvable} does not hold with this $\mathcal{B}$. Indeed, in this case
$\mathcal{B}_0^*: C^{\infty}(Q_0)^3\rightarrow C^{\infty}(Q_0)^2$ would be now (compare with
\eqref{calculLadjointB0}) such that, for every $z=(z^1,z^2)\in C^{\infty}(Q_0)^2$
and for every $\pi \in C^{\infty}(Q_0)$,
\begin{gather}
\label{calculLadjointB0-new}
\mathcal{B}_0^*(z,\pi)=(z^1,z^2).
\end{gather}
Let
$F_1\in C^\infty(T/2,T)$ and let $F_2\in C^\infty(T/2,T)$. We define $z=(z^1,z^2)
\in C^\infty(Q_0;\R^2)$ and $\pi \in C^{\infty}(Q_0)$ by
\begin{gather*}
z^1(t,x):=F_1(t), \\ z^2(t,x):=F_2(t),\\\pi(t,x):= -F'_1(t) x_1 -F'_2(t) x_2+F_1(t)\overline y^1+F_2(t)\overline y^2.
\end{gather*}
Then $\mathcal{L}_0^*(z,\pi)=0$. However, if $(F_1,F_2)\not = (0,0)$,
then $\mathcal{B}_0^*(z,\pi)\not =0$. Hence, in this case,
\eqref{eqM0adjL0adj} does not hold whatever the linear partial
differential operator $\mathcal{M}_0$ is and whatever the trajectory $(\overline y, \overline p,\overline u)$ is.
\end{Remark}

\subsubsection{Number of variables and equations}
\label{denomb}

Let us give some algebraic results about the number of derivatives of a certain order.
\begin{Definition}
Consider a scalar PDE with a smooth (enough) variable $z$ depending on $4$ variables $x_0,x_1,x_2,x_3$. We call \emph{equations of level} $n$ all the different equations we obtain by differentiating the PDE with respect to all the possible multi-integers of length $n$. The number of ``distinct'' equations of level $n$ is denoted $E(n)$, and the number of ``distinct'' equations of a level less than or equal to $n$ is denoted $F(n)$.
\end{Definition}
\begin{Remark}
Clearly, $E(n)$ is also the distinct number of derivatives of order $n$ for (smooth enough) functions having $4$ variables, and $F(n)$ is also the distinct number of derivatives of an order less or equal than $n$ for (smooth enough) functions having $4$ variables. Moreover,
if we consider a scalar PDE with many variables $z^1,\ldots,z^k$ depending on $x_0,x_1,x_2,x_3$ containing derivatives of $z^1,\ldots z^k$ of order $m$ at most, the maximum number of derivatives of $z^1,\ldots z^k$ we may expect in the equations of a level less than or equal to $n$ is $kF(n+m)$.
\end{Remark}
We want to compute $E$ and $F$ precisely. One has
\begin{gather}
\label{expEn}E(n)=\displaystyle\frac{(n+1)(n+2)(n+3)}{6},
\\
\label{expFn}F(n)=\displaystyle\frac{(n+1)(n+2)(n+3)(n+4)}{24}.
\end{gather}
Indeed
\begin{equation*}
(\alpha_0,\alpha_1,\alpha_2,\alpha_3)\mapsto
\{\alpha_0+1,\alpha_0+\alpha_1+2,\alpha_0+\alpha_1+\alpha_2+3,\alpha_0+\alpha_1+\alpha_2+\alpha_3+4\}
\end{equation*}
defines a bijection between the set of $(\alpha_0,\alpha_1,\alpha_2,\alpha_3)\in \mathbb{N}^4$ such that $\alpha_0+\alpha_1+\alpha_2+\alpha_3\leqslant n$ and the set of subsets of $\{1,2,\ldots,n+4\}$ having 4 elements.
Hence, $F(n)$ being the number of $(\alpha_0,\alpha_1,\alpha_2,\alpha_3)\in \mathbb{N}^4$ such that $\alpha_0+\alpha_1+\alpha_2+\alpha_3\leqslant n$, we have \eqref{expFn}. In order to obtain \eqref{expEn},
it suffices to notice that
\begin{gather*}
\begin{aligned} E(n)&=F(n)-F(n-1)
\\&=\displaystyle\frac{(n+1)(n+2)(n+3)}{6}.
\end{aligned}
\end{gather*}

\subsubsection{A related overdetermined system}
\label{subsub-over}
Let us now study the equation \eqref{eqaresoudre}, where the data
is $(\varphi^1,\varphi^2,\varphi^3,\varphi^4) \in C^{\infty}(Q_0)^4$
and the unknown is $(z^1,z^2,\pi)\in C^{\infty}(Q_0)^3$.

Let us explain the idea behind the reasoning we are going to develop in this subsection. Equation \eqref{eqaresoudre} is ``analytically'' overdetermined, since we have more equations ($4$) than unknowns ($3$). However, if we see \eqref{eqaresoudre} as a linear system of algebraic unknowns (the unknowns being $z^1,z^2,\pi$ and their derivatives) the system is now ``algebraically'' underdetermined: We have $4$ equations and $19$ unknowns. But it is easy to obtain as many new equations as we want: It suffices to differentiate \eqref{eqaresoudre} enough times. Some new ``algebraic unknowns'' (the derivatives of $z^1,z^2,\pi$) appear, but since the system was ``analytically'' overdetermined, one can hope that they are not ``too many'' new unknowns appearing. Notably, one can hope that, after differentiating a sufficient number of times, we obtain more equations than ``algebraic unknowns''. We would then deduce Assumption \hyperlink{assu2}{$\mathcal{A}_2$} by ``inverting'' in some sense this well-posed linear system (this will be explained in detail later).

We first eliminate $\pi$ in our equation \eqref{eqaresoudre}. To reach this goal, in \eqref{eqaresoudre},
we apply $\partial_3$ to the first and second lines, and use the third line. We obtain the following equations:
\begin{equation}
\left\{\begin{aligned}
-2\partial_{x_3}\overline y^1 \partial_{x_1}z^1-\partial_{x_3}\overline y^2\partial_{x_2}z^1+(\partial_{x_1}\overline y^1-\partial_{x_3}\overline y^3)\partial_{x_3}z^1-\overline y^1 \partial^2_{x_1x_3}z^1\\-\overline y^2\partial^2_{x_2x_3}z^1
-\overline y^3\partial^2_{x_3x_3}z^1-\partial^2_{x_3t}z^1-\Delta \partial_{x_3}z^1
-\partial_{x_3}\overline y^2\partial_{x_1}z^2 +\partial_{x_1} \overline y^2\partial_{x_3}z^2\\=\partial_{x_3}\varphi^1-\partial_{x_1}\varphi^3,
\\
-\partial_{x_3}\overline y^1\partial_{x_2}z^1+\partial_{x_2}\overline y^1 \partial_{x_3}z^1-\partial_{x_3}\overline y^1\partial_{x_1}z^2-\overline y^1\partial^2_{x_1x_3}z^2-2\partial_{x_3}\overline y^2\partial_{x_2} z^2\\-\overline y^2 \partial^2_{x_2x_3}z^2+(\partial_{x_2}\overline y^2-\partial_{x_3}\overline y^3)\partial_{x_3}z^2
-\overline y^3\partial^2_{x_3x_3}z^2-\partial^2_{x_3t}z^2-\Delta \partial_{x_3}z^2\\=\partial_{x_3}\varphi^2-\partial_{x_2}\varphi^3,
\\
-\partial_{x_1}z^1-\partial_{x_2}z^2 =\varphi^4.
\label{eqzphi}
\end{aligned}\right .
\end{equation}
The first and second equation of \eqref{eqzphi} contain derivatives
of $z^1$ and $z^2$
up to order $3$  and the third equation derivatives up to order  $1$. We would like to have the same maximal order of derivatives appearing in the three equations in order to be sure that the derivatives of maximal order appearing in the first and second equation might also appear in the third one. Hence we are going to differentiate the last equation $2$ more times than the others. If we count the maximum number of derivatives of $z^1$ and $z^2$ we create by differentiating $n$ times the first and second equation and $n+2$ times the third one, we obtain
\begin{gather}
\label{valuehn}
H(n)=2F(n+3)=\frac{(n+4)(n+5)(n+6)(n+7)}{12}
\end{gather}
different derivatives. The number $G(n)$ of equations we obtain is then
\begin{gather}
\begin{aligned}
G(n)&= 2F(n)+F(n+2)
\\&=\frac{(3+n)(4+n)(34+17n+3n^2)}{24}.
\label{valuegn}
\end{aligned}
\end{gather}
 From \eqref{valuehn} and \eqref{valuegn}, one sees that $G(n)-H(n)$ is increasing with respect to $n$
 and that
\begin{gather*}
G(18)-H(18)=-44<0 \text{ and } G(19)-H(19)=460>0.
\end{gather*}
Hence, in order to have more equations than unknowns and as few equations as possible, we choose $n=19$.
We have $G(19)=30360$ equations and $H(19)=29900$ unknowns. We can see this system of $30360$ partial differential equations as a linear system
$$L_0(t,x)Z= \Phi,$$ where $L_0\in C^\infty(Q_0;\mathcal{M}_{30360\times 29900}(\R))$, $Z\in \R^{29900}$ ($Z$ contains the derivatives of $z^1$ and $z^2$ up to the order $19$) and $\Phi \in \R^{30360}$ ($\Phi$ contains the derivatives of $\varphi$ up to the order $19$). Note that $L_0$ also depends on $\varepsilon$ and $\nu$, but this does not need to be emphasized in what follows. Hence, in order to lighten the notations, we will only see $L_0$ as a function of $t$ and $x$ (as for $\tilde{L_0},N,\ldots$ that are be introduced later).
We order the $29900$ lines of $Z$ so that
\begin{gather*}
Z^1=\partial_{x_1}z^1,\, Z^2=\partial_{x_2}z^1,\,
Z^3=\partial_{x_3}z^1,\, Z^4=\partial_{x_1}z^2,\,
Z^5=\partial_{x_2}z^2,\, Z^6=\partial_{x_3}z^2.
\end{gather*}
Assumption \hyperlink{assu2}{$\mathcal{A}_2$} can then be written as follows: Prove the existence of a nonempty
open subset $\omega_0$ of $\mathcal C_2$ and of a map $N\in C^\infty(Q_0; \mathcal{M}_{6\times 30360}(\R))$ ($N$ is the algebraic version of the linear partial differential operator $\mathcal N$ introduced in Subsection~\ref{secalgebraicsolvability}, every linear partial differential operator can be alternatively considered as a matrix acting on the derivatives of the input functions) such that
\begin{gather}
\label{NM}
N(t,x){L_0}(t,x)Z=
(Z^1,Z^2,Z^3,Z^4,Z^5,Z^6),
\, \forall (t,x) \in Q_0,\,  \forall Z \in \R^{29900}.
\end{gather}

Since the size of the matrix ${L_0}(t,x)$  is very large, it is impossible to find some $N$ verifying System \eqref{NM} by hand and we will have to do computations on a computer. Notably, it would be more convenient to make ${L_0}$ be
a sparse matrix in order to use relevant tools adapted to the study of big sparse linear systems. This is the reason for our
 simple choices for $a$, $b$ and $c$ given in \eqref{propa}, \eqref{propb} and \eqref{propc} (polynomials of small order do not create to many non zero  coefficients in ${L_0}$ when they are differentiated). Using
\eqref{defbary}, \eqref{propa}, \eqref{propb}, \eqref{propc}, \eqref{g=eabc} and \eqref{h=eabc}, System \eqref{eqzphi}  becomes simply, in $Q_0$,
\begin{equation}
\left\{\begin{aligned}
a(t)(-4x_1^3-4 x_1 x_2^2)\varepsilon \partial_{x_1}z^1+a(t)(-2 x_1^2 x_2-2 x_2^3)\varepsilon \partial_{x_2}z^1
+a(t)(14 x_1^2 x_3
\\
+10 x_2^2 x_3)\varepsilon
\partial_{x_3}z^1
+a(t)(-2
x_1^2 x_2-2 x_2^3)\varepsilon \partial_{x_1}z^2+4a(t) x_1 x_2 x_3\varepsilon
\partial_{x_3}z^2+a(t)\\(-2 x_1^3 x_3-2 x_1 x_2^2
x_3)\varepsilon \partial^2_{x_1x_3}z^1+a(t)(-2 x_1^2 x_2 x_3-2 x_2^3 x_3)
\varepsilon \partial^2_{x_2x_3}z^1+a(t)
\\(4 x_1^2 x_3^2+4 x_2^2 x_3^2)\varepsilon
\partial^2_{x_3x_3}z^1
-\partial^2_{x_3t}z^1-\partial^3_{x_1x_3x_3}z^1-\partial^3_{x_2x_2x_3}z^1-\partial^3_{x_3x_3x_3}z^1_{333}\\
=\partial_{x_3}\varphi^1-\partial_{x_1}\varphi^3,
\\
a(t)(-2 x_1^3 -2
x_1x_2^2) \varepsilon \partial_{x_2} z^1+a(t)4 x_1 x_2 x_3
\varepsilon  \partial_{x_3}z^1
+a(t)(-2 x_1^3-2 x_1 x_2^2)
\\\varepsilon\partial_{x_1}z^2+a(t)(-4 x_1^2 x_2-4 x_2^3)
\varepsilon  \partial_{x_2}z^2
+a(t)(10 x_1^2 x_3+14 x_2^2 x_3)
\varepsilon  \partial_{x_3}z^2\\
+a(t)(-2 x_1^3 x_3-2 x_1 x_2^2
x_3)\varepsilon  \partial^2_{x_1x_3}z^2+a(t)(-2 x_1^2 x_2 x_3-2 x_2^3 x_3)
\varepsilon  \partial^2_{x_2x_3}z^2\\
+a(t)(4 x_1^2x_3^2+4 x_2^2 x_3^2)
\partial^2_{x_3x_3}z^2
-\partial^2_{x_3t}z^2-\partial^3_{x_1x_1x_3}z^2-\partial^3_{x_2x_2x_3}z^2-\partial^3_{x_3x_3x_3}z^2\\
=\partial_{x_3}\varphi^2-\partial_{x_2}\varphi^3,\\
-\partial_{x_1}z^1-\partial_{x_2}z^2=\varphi^4.
\end{aligned}\right . \label{syst1}
\end{equation}
Let us consider the change of variables $$s:=\varepsilon a(t)$$ and $$e:=\frac{1}{T-t}.$$ ($e$ appears when we differentiate $t\mapsto a(t)$ on $Q_0$). Let $\R[E,S,X]$ be the set of polynomials in the variables $e$, $s$, $x_1$,
$x_2$, $x_3$, with real coefficients. The $30360\times 29900$ entries of ${L_0}$ can alternatively be seen as functions depending on $(t,x_1,x_2,x_3,\varepsilon)$ or as elements of $\R[E,S,X]$ and, from now on, we consider ${L_0}$ as an element
of $\mathcal M_{30360\times 29900}(\R[E,S,X])$.
As we will see after, it turns out that many of the entries of ${L_0}$ are the $0$ polynomial.

 For a positive integer $k$, let us denote by $\mathfrak{S}_k$ the set
 of permutations of $\{1,\ldots, k\}$. To each $\sigma \in \mathfrak{S}_k$, we associate
 the matrix $S_\sigma\in \mathcal M_{k,k}(\R)$ defined by
\begin{gather}
\label{defPsigma}
\begin{aligned}
S_{\sigma(i)i}&=1, \, \forall i \in \{1,\ldots,k\},
\\S_{ji}&=0, \, \forall i \in \{1,\ldots,k\}, \, \forall j \in \{1,\ldots,k\}\setminus\{\sigma(i)\}.
\end{aligned}
\end{gather}

For two positive integers $k$ and $l$, let us denote by
$0_{k\times l}$ the null matrix of $\mathcal M_{k\times l}(\R)$ (which is included in $\mathcal M_{k\times l}(\R[E,S,X])$).
The following lemma is a
key step for the proof of Proposition~\ref{prop-solvable}.

\begin{Lemma}
\label{lemm-good-reordering}
There exist $$\xi^0:=(e^0,s^0,x^0)\in\mathbb R^5,$$ $$\sigma \in \mathfrak{S}_{29900},$$
$${\tilde \sigma} \in \mathfrak{S}_{30360},$$
$$P\in \mathcal M_{7321\times 7321}(\R[E,S,X]),$$ $$Q\in \mathcal M_{23039\times 7321}(\R[E,S,X])$$
and $$R\in \mathcal M_{23039\times 22579}(\R[E,S,X])$$ such that
\begin{gather}
\sigma(i)=i, \,\forall i \in\{1,2,3,4,5,6\},
\label{sigmainvariants}
\\
S_{{\tilde \sigma}}{L_0}S_{\sigma}=
\begin{pmatrix}
P&0_{7321,22579}
\\
Q&R
\end{pmatrix},
\label{M=PQR}
\\
\text{the rank of $P(\xi^0)$ is $7321$.}
\label{Pmaximalrank}
\end{gather}
\end{Lemma}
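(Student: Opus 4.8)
The statement is essentially a computational/combinatorial assertion about the explicit matrix $L_0 \in \mathcal M_{30360\times 29900}(\R[E,S,X])$ built from system~\eqref{syst1}, so the plan is to reduce it, in a rigorous but computer-assisted way, to a finite symbolic computation. First I would make the construction of $L_0$ completely explicit: fix an enumeration of the $29900$ monomials in the derivatives of $z^1,z^2$ (the columns) so that the six distinguished columns $\partial_{x_1}z^1,\partial_{x_2}z^1,\partial_{x_3}z^1,\partial_{x_1}z^2,\partial_{x_2}z^2,\partial_{x_3}z^2$ come first (this makes \eqref{sigmainvariants} compatible with the reordering we seek), and an enumeration of the $30360$ differentiated equations (the rows) obtained by applying each multi-index $\partial^\alpha$, $|\alpha|\le 19$, to the first two lines of~\eqref{syst1} and each $|\alpha|\le 21$ to the third line. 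Using \eqref{defbary}, \eqref{g=eabc}, \eqref{h=eabc} and the change of variables $s=\varepsilon a(t)$, $e=1/(T-t)$, every entry of $L_0$ is a concrete polynomial in $\R[E,S,X]$ of low degree, and — because $a,b,c$ were chosen to be polynomials of small degree — most entries are identically zero; so $L_0$ is a large but very sparse explicit matrix, which the computer can assemble by purely symbolic manipulation.

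Next I would choose the evaluation point $\xi^0=(e^0,s^0,x^0)$ and perform the sparsity analysis at that point. The idea (made precise later via Lemma~\ref{lemmalowerP}) is that it suffices to exhibit \emph{one} specialization at which a $7321\times 7321$ submatrix has full rank; since the rank of a polynomial matrix is lower semicontinuous, this will give full rank of $P$ as a polynomial matrix, hence on a dense open set of $Q_0$, from which $\omega_0$ will be extracted. To find the block structure \eqref{M=PQR}, I would feed $L_0(\xi^0)$ (a numerical sparse matrix, or better its $0/1$ incidence pattern) to the Dulmage--Mendelsohn decomposition: this returns permutations $\tilde\sigma$ of the rows and $\sigma$ of the columns putting $S_{\tilde\sigma} L_0(\xi^0) S_\sigma$ into block upper-triangular form whose ``horizontal'' (over-determined) block is exactly a matrix of the shape $\begin{pmatrix}P\\Q\end{pmatrix}$ sitting over a zero block of width $22579$, with $P$ of size $7321\times 7321$. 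One checks that the six distinguished columns can be kept among the first $7321$ columns (adjusting $\sigma$ within its blocks so that \eqref{sigmainvariants} holds — this is where one must verify the DM decomposition does not force those six columns into the under-determined part, a point the explicit structure of~\eqref{syst1} should guarantee). Finally, I would verify $\det P(\xi^0)\neq 0$ by an exact (rational, or finite-field then lifted) computation on the resulting $7321\times 7321$ sparse matrix; since only symbolic/exact arithmetic is used, this is a genuine proof, not a numerical estimate.

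The main obstacle is of course the size: a $7321\times 7321$ determinant, even over a sparse matrix with polynomial entries, is at the edge of what is feasible, so the real work is in organizing the computation — choosing $\xi^0$ so that the entries are small integers, exploiting the block-triangular refinement of the DM decomposition to reduce the $\det P$ computation to a product of determinants of much smaller diagonal blocks, and possibly computing $\det P(\xi^0) \bmod p$ for several primes $p$ and using that a nonzero residue certifies nonvanishing. A secondary subtlety is bookkeeping: one must be certain that the permutations $\sigma,\tilde\sigma$ and the block sizes $7321$, $23039$, $22579$ returned by the algorithm are recorded exactly and that $7321+22579 = 29900$ and $7321+23039 = 30360$, and that the constraint \eqref{sigmainvariants} is not merely convenient but actually achievable — which it is because the six target columns appear with nonzero (indeed invertible, after specialization) coefficients already in the low-order differentiated equations, so a DM block containing them can be selected inside $P$. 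Everything else — semicontinuity of rank, the passage from $\xi^0$ to an open subset $\omega_0 \subset \mathcal C_2$ — is soft and will be handled in the subsequent lemmas.
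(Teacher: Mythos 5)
Your overall strategy is the paper's: assemble $L_0$ symbolically, specialize at a well-chosen point $\xi^0$, use the Dulmage--Mendelsohn decomposition to locate a square full-rank block containing the six distinguished columns, and then lift the conclusion back to the polynomial matrix. However, two steps would fail as you describe them. First, a single DM pass on the full matrix $L_0(\xi^0)$ does not produce a full-rank $7321\times 7321$ block: the DM decomposition only controls the \emph{structural} rank, and for this matrix the structural rank badly overestimates the true rank (the paper finds $\text{{\tt sprank}}(N^0)=28654$ while the actual rank is smaller, and even the square overdetermined block $\overline{L}^{0}_0$ extracted from a reduced subsystem is singular). The number $7321$ is not the size of the overdetermined DM block of $L_0(\xi^0)$; it is the size of the \emph{last diagonal block of the fine DM decomposition} of the overdetermined block of a deliberately chosen \emph{subsystem} (the equations differentiated only $15$, resp.\ $17$, times), and only that innermost block turns out to have full numerical rank. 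Your remark about using the block-triangular refinement to speed up the determinant computation is adjacent to this, but you present it as an optimization rather than as the step without which no full-rank $P$ is found at all.

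Second, identity \eqref{M=PQR} is an identity in $\mathcal M(\R[E,S,X])$, so the zero block must consist of entries that are identically zero \emph{as polynomials}, not merely zero at $\xi^0$. Because the natural choice $e^0=0$ (which is what makes the specialized matrix tractably sparse) annihilates every term coming from derivatives of $a$, the sparsity pattern of $L_0(\xi^0)$ is strictly finer than that of $L_0$; a permutation producing a zero block for $L_0(\xi^0)$ may place genuinely nonzero polynomial entries inside that block. The paper must (and does) verify a posteriori that none of the entries in $\Theta\setminus\Theta^0$ land there. Your proposal conflates the two incidence patterns and omits this check; running the DM decomposition on the polynomial-level pattern instead would repair this, but you would then lose the sparsity gain that makes the computation feasible, so the check cannot simply be waved away. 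Finally, your heuristic that the six distinguished columns ``should'' end up inside $P$ because they carry invertible coefficients in low-order equations is not an argument --- in the paper this is property \eqref{good-place}, verified only by tracking the columns through the permutations --- though since the whole lemma is computer-assisted, an a posteriori check is an acceptable substitute.
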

Let us assume for the moment that this lemma holds and end the proof
of Proposition~\ref{prop-solvable}.  A consequence of Lemma~\ref{lemm-good-reordering} is the following:
\begin{Lemma}
\label{lemmalowerP}
There exists a nonempty open subset $\omega_0$ of $\mathcal C_2$, $T^*>0$ and $\varepsilon^*>0$, such that  \begin{gather}
\label{detnonnulsuromega01}\text {det }P(\frac{1}{T-t},\varepsilon a(t),x)\not = 0,
\, \forall T\in (0,T^*], \, \forall t \in [T/2,T),\, \forall \varepsilon \in (0,\varepsilon_0],
\, \forall x\in \omega_0.
\end{gather}
\end{Lemma}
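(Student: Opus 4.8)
The plan is to use that $\det P$ is a single polynomial which, by \eqref{Pmaximalrank}, does not vanish at $\xi^{0}$ and is therefore not the zero polynomial. The difficulty is that along the admissible curve $e=1/(T-t)$, $s=\varepsilon a(t)$ the variable $e$ is \emph{not} bounded (for $T\leqslant T^{*}$ one has $e\geqslant 2/T^{*}$), so one cannot simply invoke the fact that the zero set of a nonzero polynomial is nowhere dense. The crucial point is that $s=\varepsilon a(t)=\varepsilon e^{-\nu e^{5}}$ tends to $0$ faster than any power of $e$, so that for $e$ large the behaviour of $\det P$ is governed by the part of $\det P$ of highest degree in $e$ evaluated at $s=0$.

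First I would set $D:=\det P\in\R[E,S,X]$; by \eqref{Pmaximalrank}, $D\not\equiv0$. Let $m$ be the largest integer such that $s^{m}$ divides $D$, and write $D=s^{m}\widehat{D}$, so that $\widehat{D}\in\R[E,S,X]$ is not divisible by $s$ and hence $\widehat{D}(e,0,x)$ is a nonzero polynomial in $(e,x)$. Writing $\widehat{D}(e,0,x)=\sum_{k=0}^{K}e^{k}p_{k}(x)$ with $p_{K}$ a nonzero polynomial in $x$, and using that a nonzero polynomial cannot vanish on the nonempty open set $\mathcal{C}_{2}$, I would pick $x^{*}\in\mathcal{C}_{2}$ with $p_{K}(x^{*})\neq0$ and take for $\omega_{0}$ a small ball around $x^{*}$ contained in $\mathcal{C}_{2}$ on which $|p_{K}|\geqslant\delta$ for some $\delta>0$. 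Note that $\omega_{0}$, like $p_{K}$ and like $P$ itself (whose dependence on $\varepsilon$ has been absorbed in the variable $s$), does not depend on $\varepsilon$.

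Next, writing $\widehat{D}(e,s,x)=\widehat{D}(e,0,x)+s\,\widetilde{R}(e,s,x)$ with $\widetilde{R}\in\R[E,S,X]$, I would estimate, for $e\geqslant1$, $x\in\omega_{0}$, $0<\varepsilon\leqslant\varepsilon^{*}$ and $s=\varepsilon e^{-\nu e^{5}}$,
\[
\widehat{D}(e,s,x)=e^{K}\Bigl(p_{K}(x)+\sum_{k=0}^{K-1}e^{k-K}p_{k}(x)+e^{-K}s\,\widetilde{R}(e,s,x)\Bigr),
\]
and bound the two error terms: the middle sum by $C_{1}/e$ (since $x$ ranges in the bounded set $\mathcal{C}_{2}$), and $|e^{-K}s\,\widetilde{R}(e,s,x)|$ by $C_{2}\varepsilon^{*}$, using $|s|\leqslant\varepsilon^{*}$, $|\widetilde{R}(e,s,x)|\leqslant C(1+e)^{\deg\widetilde{R}}$ on the relevant range, and the key fact that $\sup_{e\geqslant1}e^{N}e^{-\nu e^{5}}<\infty$ for every $N\in\mathbb{N}$. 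Choosing $E_{0}\geqslant1$ with $C_{1}/E_{0}\leqslant\delta/4$, then $\varepsilon^{*}>0$ with $C_{2}\varepsilon^{*}\leqslant\delta/4$, yields $|\widehat{D}(e,s,x)|\geqslant e^{K}\delta/2>0$ for all $e\geqslant E_{0}$. Finally, taking $T^{*}:=2/E_{0}$ forces $e=1/(T-t)\geqslant2/T\geqslant E_{0}$ for all $T\leqslant T^{*}$ and $t\in[T/2,T)$. Since $\det P(\tfrac{1}{T-t},\varepsilon a(t),x)=(\varepsilon a(t))^{m}\widehat{D}(e,s,x)$ and $\varepsilon a(t)=\varepsilon e^{-\nu/(T-t)^{5}}>0$ for $t<T$, this gives \eqref{detnonnulsuromega01}.

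The main obstacle is precisely the unboundedness of $e$: if $e$ stayed in a compact set the conclusion would follow at once from the fact that a nonzero polynomial is nonzero on a dense open set, but here one has to use the super-polynomial decay of $a(t)$ near $t=T$ to make the $s$-dependent terms negligible and to let the leading-order term $e^{K}p_{K}(x)$ control $\det P$ uniformly on $\omega_{0}$.
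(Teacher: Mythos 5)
Your proposal is correct and follows essentially the same route as the paper: factor out the maximal power of $s$ from $\det P$, use that the quotient evaluated at $s=0$ is a nonzero polynomial in $(e,x)$ to find $\omega_0$ and a lower bound valid for $e$ large, and then absorb the $s$-dependent remainder using the super-polynomial decay of $a(t)$ near $t=T$. The only cosmetic differences are that you spell out the leading-coefficient argument in $e$ (where the paper simply asserts the existence of $\delta$, $C'$ and $\omega_0$) and you bound the remainder by polynomial division rather than the mean value theorem.
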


\textbf{Proof of Lemma~\ref{lemmalowerP}.}

Let us first point out
that $\text {det }P\in \R[E,S,X]$ and, by
\eqref{Pmaximalrank}, this polynomial is not the
$0$ polynomial. Hence there exist a nonnegative integer $m$ and
a polynomial $\tilde P\in \R[E,S,X]$ such that
\begin{gather}
\label{eqtildeP}
\text{det }P(E,S,X)= S^m\tilde P(E,S,X),
\\
\label{tildePnot0}
\tilde P(E,0,X)\in \R[E,X] \text{ is not the $0$ polynomial}.
\end{gather}
By \eqref{tildePnot0}, there exist $\delta >0$, $C'>0$
and a nonempty open subset $\omega_0$ of $\mathcal C_2$ such that
\begin{gather}
\label{ineqtildeP}
|\tilde P (e,0,x)|\geqslant 2 \delta, \, \forall e\in [C',+\infty),\,
\forall x \in \omega_0.
\end{gather}
By the mean value theorem, there exist a positive integer $l$ and a positive real number
$C^*$ such that
\begin{gather}
\label{estdiff}
|\tilde P (e,s,x)-\tilde P(e,0,x)|\leqslant C^*|s|\left(\left|e\right|^l+\left|s\right|^l+1\right),
 \, \forall e\in\R,\, \forall s \in \R, \, \forall x \in \omega_0.
\end{gather}
By \eqref{propa}, there exists $\varepsilon^*$ such that
\begin{gather}
\label{epetit}
\varepsilon^*|a(t)|\left((T-t)^{-l}+\varepsilon^{*l}|a(t)|^l+1\right)
\leqslant \frac{\delta}{C^*}, \, \forall T\in (0,2/C'], \, \forall t\in [T/2,T).
\end{gather}
 From \eqref{ineqtildeP}, \eqref{estdiff} and \eqref{epetit}, we obtain that
\begin{gather*}
|\tilde P ((T-t)^{-1},\varepsilon a(t),x)|\geqslant \delta,
\,  \forall (T, t,\varepsilon,x)\in (0,2/C']\times [T/2,T)\times (0,\varepsilon^*]
\times \omega_0,
\end{gather*}
which concludes
the proof of Lemma~\ref{lemmalowerP}. \cqfd
Let us now go back to the proof of Proposition~\ref{prop-solvable}.
 For every positive integer $l$,
we denote by $\text{Id}_{l}$ the identity matrix of $\R^{l}$. By \eqref{detnonnulsuromega01},
there exists $U\in C^\infty(Q_0; \mathcal{M}_{7321\times 7321}(\R))$ such that
\begin{gather}
\label{Uinvert}
U(t,x)P(t,x,\varepsilon)=\text{Id}_{7321}, \, \forall x \in \omega_0.
\end{gather}
Let $\tilde U \in C^\infty(Q_0; \mathcal{M}_{7321\times 30360}(\R))$
be defined by
\begin{gather}
\label{deftildeU}
\tilde U (t,x):=
\begin{pmatrix}
U(t,x) &0_{7321,23039}
\end{pmatrix}
,\, \forall x \in \omega_0.
\end{gather}
 From \eqref{M=PQR}, \eqref{Uinvert} and \eqref{deftildeU}, one has
\begin{gather}
\label{USMS}
\tilde U(t,x)S_{{\tilde \sigma}}{L_0}(t,x)=
\begin{pmatrix}
\text{Id}_{7321} &0_{7321,22579}
\end{pmatrix}
S_{\sigma}^{-1},\, \forall x \in \omega_0.
\end{gather}
Let $K\in \mathcal M_{6,7321}(\R)$ be defined by
\begin{gather}
\label{defK}
K:=\begin{pmatrix}
\text{Id}_{6} &0_{6,7315}
\end{pmatrix}
.
\end{gather}
 From \eqref{sigmainvariants}, \eqref{USMS} and \eqref{defK}, one has
\begin{gather*}
\label{KUSM}
K\tilde U(t,x)S_{{\tilde \sigma}}{L_0}(t,x)=\begin{pmatrix}
\text{Id}_{6}& 0_{6,29894}
\end{pmatrix} S_{\sigma}^{-1}=
\begin{pmatrix}
\text{Id}_{6}& 0_{6,29894}
\end{pmatrix}, \, \forall x \in \omega_0,
\end{gather*}
which shows that \eqref{NM} holds with $N(t,x):=K\tilde U(t,x)S_{{\tilde \sigma}}$, and ends the proof of Proposition~\ref{prop-solvable}.

\cqfd

To finish the proof of Proposition~\ref{prop-solvable}, it suffices to prove Lemma~\ref{lemm-good-reordering}.

\textbf{Proof of Lemma~\ref{lemm-good-reordering}.}

 The fact that the dependence of $\overline y$ and its derivatives in the time variable is quite complicated (it is both exponential and fractional) compared to the dependence in the space variable (which is polynomial) is problematic, because it is not very convenient to use for computations on a computer. In the previous proof we have seen $\text{det }P$ as a polynomial in $s=\varepsilon a(t)$, $e=\frac{1}{T-t}$ (which corresponds to terms appearing when we differentiate $t\mapsto a(t)$) and $x$. Assume that we fix $e=0$: This is equivalent to do ``as if'' the derivatives of $a$ were all identically the null function, i.e. to do as if  the function $t\mapsto a(t)$ were replaced by a constant function, which is simpler than our original function $a$. We will then impose $e^0=0$ for our computations.
Let us set $\xi^0:=(e^0,s^0,x^0)$ with $e^0=0$, $s^0=1$ and $x^0=(1.1,1.2,1.3)$.

First of all, let us prove that one can decompose $M$ as in \eqref{M=PQR} at least at point $\xi^0$.
We present in the Appendix~\ref{program} how we computed the matrix
$${L_0^0}:={L_0}(\xi^0)\in \mathcal{M}_{30360\times 29900}(\R)$$
 thanks to a $C^{++}$ program.

  From now on we assume that we have
 matrix $L^0_0$ at our disposal and we are going to explain
 how to exploit it in order to obtain Lemma~\ref{lemm-good-reordering}.

 We begin with reordering the columns so
 that the null columns of ${L_0^0}$ are moved to the last columns. One verifies for example thanks to Matlab that there are exactly
 $140$ such columns.
There exist $\overline{\sigma} \in \mathfrak{S}_{29900}$ and
$N^0\in \mathcal{M}_{30360\times 29760}(\R)$ such that

 \begin{gather}
{L_0^0}S_{\overline{\sigma}}=
\begin{pmatrix}
N^0&0_{30360\times 140}
\\
\end{pmatrix}.
\end{gather}
One problem is that it could happen that some columns of ${L_0^0}$
are equal to $0$ but the corresponding columns of ${L_0}$ are not
identically null. However, we check that it is not
the case (thanks to the evaluation function described in Appendix~\ref{program}).

Let us recall that our goal is to extract a well-chosen submatrix of $L^0_0$ which is of maximal rank. A reasonable hope would have been that the matrix $N^0$ (of size $30360\times 29760$) itself is
of maximal rank $29760$ (we would then have obtained something similar to Lemma~\ref{lemm-good-reordering} by
choosing some squared extracted matrix of maximal rank $P^0$ of $N^0$, which is always possible, the matrix $P^0$ would then have been of size $29760\times 29760$ and the non-selected lines would be permuted to obtain matrices $Q^0$ of size $600\times 29760$ and $R^0$ of size $600\times 140$).
 However it turns out to be false, as we will see later.

Since computing the rank of ${L_0^0}$ on a computer is too long because of its size, we introduce the
notion of structural rank.
\begin{Definition}Let  $A\in \mathcal M_{n,m}(\R)$ and $B\in\mathcal M_{n,m}(\R)$. We say that $A$ and $B$ are structurally equivalent  if
the following property is verified:
$$A_{ij}=0\Leftrightarrow B_{ij}=0.$$ This is  an equivalence relation on $\mathcal M_{n,m}(\R)$, and we call $Cl(A)$ the equivalence class of $A$.
The structural rank of a matrix $A$ (denoted $\text{{\tt sprank}}(A)$ in the following)
is the maximal rank of the elements of $Cl(A)$. Equivalently, if we fill randomly the nonzero
coefficients of $A$, then, with probability $1$, the rank of $A$ is equal to the structural rank.
\end{Definition}

One sees that the structural rank does not depend on the coefficients of the
matrix but only on the distribution of the zeros in the matrix and is never less than the rank. The advantage of the structural rank is that
 it can be computed fast (in a couple of seconds in our case),
especially on sparse matrices. It corresponds to the function {\tt sprank} in Matlab.

Computing the structural rank of $N^0$ thanks to Matlab we find that $$\text{{\tt sprank}}(N^0)=28654<29760,$$
hence there is no hope that the rank of $N^0$ is maximal.

To extract a submatrix of $N^0$ which is of maximal rank,
we can, for example, begin with extracting a submatrix of $P^0$ which is of maximal structural rank, and verify that it is of maximal rank too. The right way to do this is to explore more carefully
how the structural rank is computed. In fact the key point is the existence of a decomposition in block triangular form (which is related to the Dulmage-Mendelsohn decomposition for the bipartite graph associated to any matrix, see \cite{MR0097069} and \cite{MR1095132}) of a matrix.
\begin{Proposition}
Let $A$ be a matrix. Then one can permute
the columns and the lines of $A$ to obtain a matrix of the following form:
\begin{equation}\begin{pmatrix}
A_{11} &A_{12} &A_{13}&A_{14}
\\

0 &0 &A_{23}&A_{24}
\\
0 &0 &0&A_{34}
\\
0 &0 &0&A_{44}
\end{pmatrix},\label{block}\end{equation}
where:
\begin{enumerate}
\item $(A_{11}, A_{12})$ is the underdetermined part of the matrix, it always has more columns than rows.
\item $(A_{33}, A_{34})$ is the overdetermined part of the matrix, it always has more rows than columns.
\item $A_{12}, A_{23}, A_{34}$ are square matrices with nonzero diagonals (in particular these matrices are of maximal structural rank)
\item $A_{23}$ is the well-determined part of the matrix (if the matrix is square and non-singular, it is the entire matrix).
\end{enumerate}
Moreover, one can permute rows and columns so that $A_{23}$ is also block triangular.
The decomposition obtained is called the block triangular form of matrix $A$.
The structural rank of $A$ is given by the sum of the structural ranks of $A_{12}, A_{23}, A_{34}$.
\end{Proposition}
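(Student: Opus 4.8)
The proof rests on bipartite graph theory, and the plan is to obtain the block triangular form as the Dulmage--Mendelsohn canonical decomposition associated to a maximum matching. To a matrix $A\in\mathcal{M}_{n\times m}(\R)$ one associates the bipartite graph $G(A)$ with vertex set $R\sqcup C$, where $R$ is a set of $n$ ``row vertices'' and $C$ a set of $m$ ``column vertices'', and with an edge joining $r_i$ to $c_j$ precisely when $A_{ij}\neq 0$. The first thing I would record is the classical identification of the structural rank of $A$ with the \emph{term rank} of $A$, that is, with the maximal size of a matching in $G(A)$: on one side, the rank of any matrix structurally equivalent to $A$ is at most its term rank, because a nonzero $r\times r$ minor has a nonvanishing term in its Leibniz expansion and such a term is precisely a matching of size $r$; on the other side, for a generic (random) filling of the nonzero entries the $r\times r$ minor attached to a fixed maximum matching is a polynomial in the entries which is not identically zero, hence nonzero with probability one, so the maximum is attained and equals the term rank.

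Next I would fix a maximum matching $M$ of $G(A)$ and carry out the Dulmage--Mendelsohn decomposition relative to $M$. Calling an $M$-alternating path one that alternates between edges outside $M$ and edges in $M$, I let $HR$ be the set of vertices reachable from an $M$-unsaturated column vertex along an $M$-alternating path, $VR$ the set of vertices reachable from an $M$-unsaturated row vertex along an $M$-alternating path, and $SQ$ the complement of $HR\cup VR$. The key, and genuinely delicate, step is the proof that $HR\cap VR=\varnothing$: if a vertex lay in both, one would splice the two alternating paths reaching it at their first common vertex and, after a parity check on the matching edge there, obtain an $M$-augmenting path between an unsaturated row vertex and an unsaturated column vertex, contradicting the maximality of $M$. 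This ``alternating path surgery'' is elementary but fiddly, and it is really the heart of the statement. Along the way I would record that every row vertex of $HR$ is $M$-saturated with partner again in $HR$, and dually for $VR$, so that $HR$ carries strictly more column than row vertices, $VR$ strictly more row than column vertices (whenever these sets are nonempty), while $M$ restricts to a perfect matching between the rows and the columns of $SQ$.

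Granting this, the block form follows by bookkeeping. I partition $C$ into $C_1$ (the $M$-unsaturated columns, all of them in $HR$), $C_2$ (the columns of $HR$ that are $M$-partners of rows of $HR$), $C_3$ (the columns of $SQ$) and $C_4$ (the columns of $VR$), and $R$ into $R_1=HR\cap R$, $R_2=SQ\cap R$, $R_3$ (the $M$-saturated rows of $VR$) and $R_4$ (the $M$-unsaturated rows of $VR$), and reorder rows and columns accordingly. From the reachability description one reads off that an edge issued from $R_2$ can land only in $C_3\cup C_4$ (a nonzero entry of a row of $R_2$ in $C_1$ or $C_2$ would, through the matching edge at that saturated row, force the row into $HR$), and that an edge issued from $R_3\cup R_4$ can land only in $C_4$ (otherwise one extends an alternating path emanating from an unsaturated row); this is exactly the asserted zero pattern. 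After a last reindexing the blocks $A_{12}$ (rows $R_1$, columns $C_2$), $A_{23}$ (rows $R_2$, columns $C_3$) and $A_{34}$ (rows $R_3$, columns $C_4$) each carry the restriction of $M$ along their diagonal, hence are square with nonzero diagonal, the underdetermined part $(A_{11},A_{12})$ has more columns than rows, and the overdetermined part carried by the column block $C_4$ has more rows than columns.

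For the ``moreover'' clause I would refine $A_{23}$ by forming the directed graph on the $M$-matched pairs of $SQ$, with an arc from the pair of $c$ to the pair of $c'$ whenever the entry of $A$ in row $M^{-1}(c')$ and column $c$ is nonzero, and then ordering the vertices by a topological sort of the strongly connected components of this digraph (Tarjan's algorithm); this turns $A_{23}$ into a block triangular matrix with irreducible diagonal blocks. Finally, since $A_{12}$, $A_{23}$ and $A_{34}$ are square with nonzero diagonals their structural ranks equal $|R_1|$, $|R_2|$ and $|R_3|$, while $M$ saturates on the row side exactly $R_1\cup R_2\cup R_3$; hence $\mathtt{sprank}(A)=|M|=|R_1|+|R_2|+|R_3|=\mathtt{sprank}(A_{12})+\mathtt{sprank}(A_{23})+\mathtt{sprank}(A_{34})$, which is the last assertion. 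The main obstacle throughout is the combinatorial surgery in the second step; once the disjointness of $HR$ and $VR$ is secured, the rest is careful accounting.
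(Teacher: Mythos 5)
Your proof is correct, and it is precisely the classical Dulmage--Mendelsohn argument that the paper itself does not reproduce: this Proposition is stated there as a known result with references to Dulmage--Mendelsohn and to the literature on the block triangular form, so there is no in-paper proof to diverge from. You correctly isolate the one genuinely delicate point --- the disjointness of the two reachable sets $HR$ and $VR$, obtained by splicing the two alternating paths at their first common vertex and checking alternation there to produce an augmenting path --- and the remaining bookkeeping (the zero pattern from reachability, the matching sitting on the diagonals of $A_{12}$, $A_{23}$, $A_{34}$, the identification of structural rank with maximum matching size, and the fine decomposition via the condensation of the contracted digraph on the square part) is carried out soundly.
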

The block triangular form \eqref{block} (called the coarse decomposition)
of the matrix is in fact given by the {\tt dmperm} function in Matlab,
which also gives the permutation that makes the matrix be in the form of \eqref{block} and
a block triangular form for the well-determined part
(which is called the fine decomposition).
One easily understands how to obtain a matrix in the form of
\eqref{M=PQR} thanks to this decomposition: One can (for instance) permute the blocks to obtain
\begin{gather}\label{btf-perm}\begin{pmatrix}
A_{34}&0 &0 &0
\\
A_{44}&0 &0 &0
\\
A_{14}&A_{11} &A_{12} &A_{13}
\\
A_{24}&0 &0 &A_{23}
\\
\end{pmatrix},\end{gather}
from which we easily deduce decomposition \eqref{M=PQR}.

To simplify the computations, we are not going to apply this block triangular decomposition directly to ${L_0^0}$ but to
a well-chosen submatrix $\tilde{L}^{0}_0$. First of all, we go back to ${L_0}$  and select some equations and unknowns:
There exist $\sigma^0 \in \mathfrak{S}_{29900}$,
${\tilde \sigma}^0 \in \mathfrak{S}_{30360}$,
$\tilde{Q}\in \mathcal{M}_{16623\times 14630}(\R)$
and $\tilde{R}\in \mathcal{M}_{16623\times  15270}(\R)$
such that (see \eqref{btf-perm})
\begin{gather}\label{presel} S_{{\tilde \sigma}^0}L_{0}S_{\sigma^0}=
\begin{pmatrix}
{\tilde L_{0}}&0_{13737\times 15270}
\\
\tilde{Q}&\tilde{R}
\end{pmatrix}
,\end{gather}
where $\tilde{L}_{0}$ corresponds to the equations we obtain by differentiating the
two first equations of \eqref{eqzphi} 15 times and the last
equation $17$ times, so that $\tilde{L}_{0}$ is of size
$(G(15),H(15))=(13737,14630)$ (here there are more unknowns than
equations but we will see that this will not be a problem).

Let us call $$\tilde{L}^{0}_0:=\tilde{L}_0(\xi^0),$$  
$$\tilde{Q}^{0}:=\tilde{Q}(\xi^0),$$
$$\tilde{R}^{0}:=\tilde{R}(\xi^0).$$
One has
$$S_{{\tilde \sigma}^0}{L_0^0}S_{\sigma^0}=
\begin{pmatrix}
{\tilde {L_0^0}}&0_{13737\times 15270}
\\
\tilde{Q^0}&\tilde{R^0}
\end{pmatrix}
.$$

Thanks to Matlab, we find the Dulmage-Mendelsohn decomposition of $\tilde{L}^{0}_0$ and observe that there exists some permutations matrices
$\sigma^1$ and ${\tilde\sigma}^1$ such that (see \eqref{btf-perm})
$$S_{{\tilde \sigma}^1}\tilde{L}^{0}_0S_{\sigma^1}=
\begin{pmatrix}
\overline{L}^{0}_0&0_{9050\times 5578}
\\
\overline{Q^0}&\overline{R^0}
\end{pmatrix},$$
with $\overline{Q}^0\in \mathcal{M}_{4687\times 9050}(\R)$,
$\overline{R}^0\in \mathcal{M}_{4687\times  5578}(\R)$, and $\overline{L}^{0}_0\in \mathcal{M}_{9050\times  9050}(\R)$
is of maximal structural rank and square 
(it corresponds to the block $A_{34}$ in the block triangular
decomposition). Applying the Dulmage-Mendelsohn algorithm now on
$\overline{L}^{0}_0$, we can write $\overline{L}^{0}_0$ in an (upper) block triangular
form with $352$ diagonal blocks, the first
$351$ of them being of ``small'' size and the latter one being of size $7321$.
Let us call ${\overline{L}^{0}_0}_{(i,j)}$ (with $(i,j)\in [|1;352|]^2$) the blocks of $\overline{L}^{0}_0$.

Using this decomposition, one can see (using Matlab) that $\overline{L}^{0}_0$
is not of maximal rank. However, by computing the rank of the block
${\overline{L}^{0}_0}_{352,352}$ thanks to Matlab, one sees that it is of maximal rank $7321$.
Moreover, we verify that
\begin{equation}
\left \{\begin{aligned}\label{good-place} \text{the columns corresponding to the unknowns }
\partial_{x_1}z^1,\partial_{x_2}z^1,\partial_{x_3}z^1,\partial_{x_1}z^2,\\
\partial_{x_2}z^2,\partial_{x_3}z^2 \text{ appear in this block,}
\end{aligned}
\right .
\end{equation}
 by looking carefully on Matlab
where the columns corresponding to these unknowns have been moved under the action of the permutation matrices $S_{\sigma^0}$ and $S_{\sigma^1}$. More precisely, $\partial_{x_i} z^1$ corresponds to the $i$-th column of ${\overline{L}^{0}_0}_{(352,352)}$ and $\partial_{x_i} z^2$ to the $(3632+i)$-th column of $\overline{L^{0}}_{352,352}$.

 Let us call
$$P^0:={\overline{L}^{0}_0}_{352,352}.$$
There exist $\sigma \in \mathfrak{S}_{29900}$,
${\tilde \sigma} \in \mathfrak{S}_{30360}$,
${Q^0}\in \mathcal M_{23039\times 7321}(\R[E,S,X])$
and
 ${R^0}\in \mathcal M_{23039\times  22579}(\R[E,S,X])$
such that
\begin{gather}\label{okdec}
S_{{\tilde \sigma}}\tilde{L}^{0}_0S_{\sigma}=
\begin{pmatrix}
P^0&0_{7321\times 22579}
\\
Q^0&R^0
\end{pmatrix},
\end{gather}
the rank of the first block $P^0$ being maximal.
The distribution of the nonzero coefficients of $P^0$ is given in Figure $2$.
\begin{figure}[!ht]
\begin{center}
\includegraphics[scale=0.45]{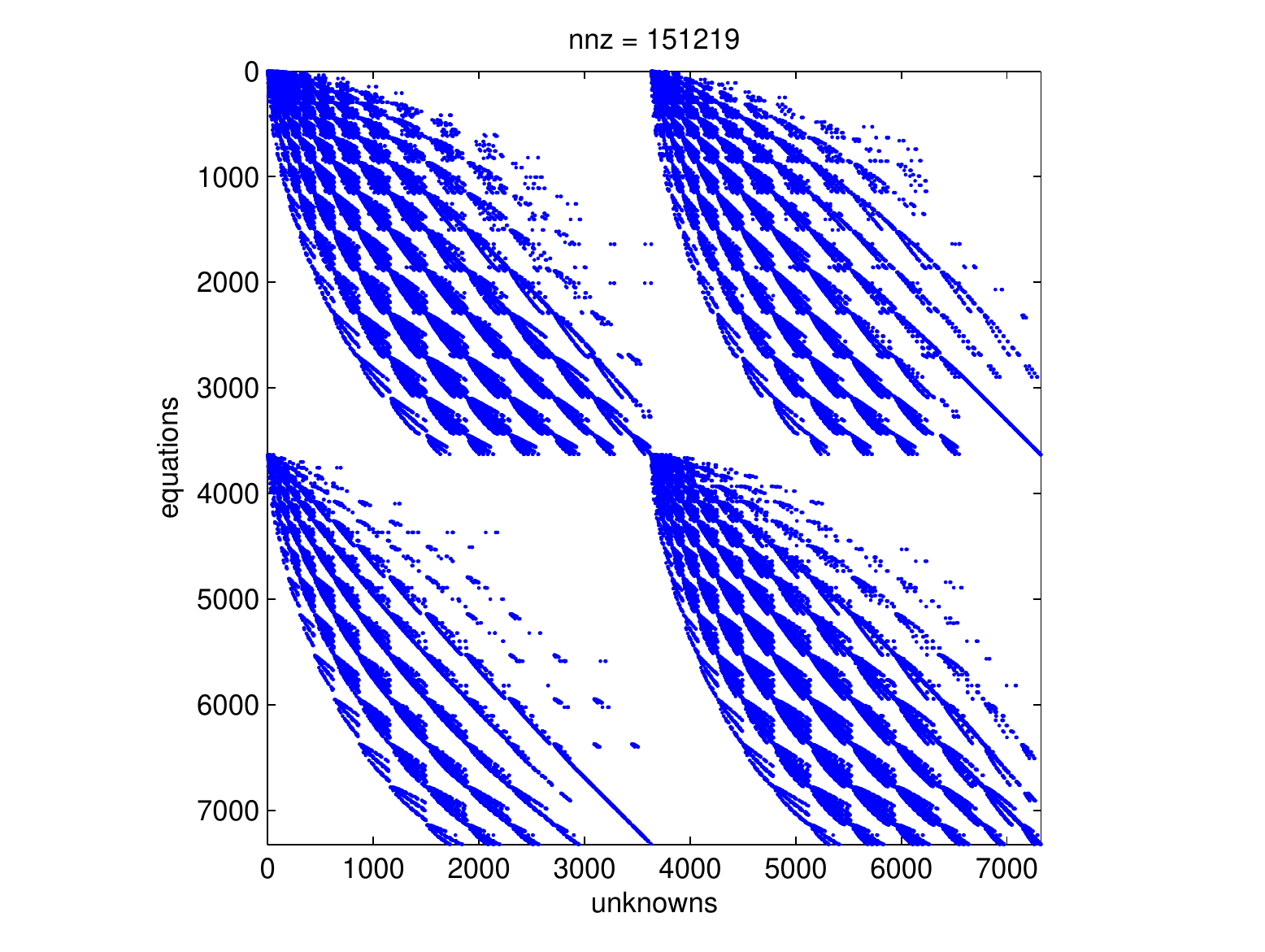}
\caption{Distribution of the nonzero coefficients of $P^0$.}
\end{center}
\end{figure}

Now, we come back to the matrix ${L_0}$ and we consider the following matrix:
$$\tilde{M}^0:=S_{{\tilde \sigma}}\tilde{L}^{0}S_{\sigma},$$
where $S_{{\tilde \sigma}}$ and $S_{{\sigma}}$ are introduced in \eqref{okdec}.

Let us call $\Theta$ the set of all coefficients of ${L_0}$ (considered as a polynomial in the variables $E,S,X$) that are not identically zero, and $\Theta^0$ the set of all coefficients of ${L^0_0}$ that are not equal to $0$. Clearly $\Theta^0\subset \Theta$ (in fact thanks to Matlab one can see that $\Theta^0$ is much smaller than $\Theta$), moreover $\Theta\setminus \Theta^0$ corresponds to the nonzero coefficients of the matrix that become identically null when we change $a(t)$ into the function identically equal to $1$ and apply it at point $\xi^0$. What could happen is that $\tilde{M}^0$ is not of block triangular form as in \eqref{okdec} (the null block of the matrix $\tilde{M}^0$ may contain some elements of $\Theta\setminus\Theta^0$). However, since the only important thing is the location of the elements of $\Theta\setminus\Theta^0$ and not their value, one can verify easily on Matlab that the coefficients of $\Theta\setminus\Theta^0$ do not influence the block form \eqref{okdec} (by looking where the elements of $\Theta\setminus\Theta^0$ are moved under the action of $S_{{\tilde \sigma}}$ and $S_{\sigma}$), i.e. the permutations $S_{{\tilde \sigma}}$ and $S_{\sigma}$ also give a decomposition as in \eqref{okdec} for $\tilde{L}^0$: There exists $$\tilde P\in \mathcal M_{7321\times 7321}(\R[E,S,X]),$$ $$\tilde Q\in \mathcal M_{23039\times 7321}(\R[E,S,X])$$
and $$\tilde R\in \mathcal M_{23039\times 22579}(\R[E,S,X])$$ such that

\begin{gather}\label{okdecabs}
S_{{\tilde \sigma}}\tilde{L}^{0}S_{\sigma}=
\begin{pmatrix}
\tilde P&0_{7321\times 22579}
\\
\tilde Q&\tilde R
\end{pmatrix},
\end{gather}
with the relations $P(\xi^0)=P^0,Q(\xi^0)=Q^0,R(\xi^0)=R^0$.

Property \eqref{M=PQR} follows then directly from \eqref{presel} and \eqref{okdecabs}, \eqref{Pmaximalrank} is a direct consequence of the above construction, and \eqref{sigmainvariants} can be easily deduced by permuting some lines and columns of $S_{{\tilde \sigma}}$ and $S_{\sigma}$ (thanks to Property \eqref{good-place}). Finally, Lemma~\ref{lemm-good-reordering} holds.
\cqfd

Consequently Proposition~\ref{prop-solvable} holds. Moreover,
one observes that
the linear partial differential operator $\mathcal{M}_0$ that
we have just created so that \eqref{eqL0M0} holds is exactly $\mathcal P^*$, (where $\mathcal P$ is the differential version of the matrix $P$ seen as a partial differential operator acting on  $(\partial_{x_1}z^1,\partial_{x_2}z^1,\partial_{x_3}z^1,\partial_{x_1}z^2,\partial_{x_2}z^2,\partial_{x_3}z^2)$) and is of order $17$ (because we have differentiated the equations of \eqref{eqaresoudre} $16$ times, and \eqref{eqzphi} was obtained by differentiating System \eqref{eqaresoudre} $1$ time). Hence, the corresponding operator $\mathcal M$ in equality \eqref{LMB} is also of order $17$.

This concludes the proof of Proposition~\ref{prop-solvable}.
\cqfd

\subsection{Controllability of the linear control system \texorpdfstring{\eqref{linearizedbary-Bf-Cauchy}}{}}
\label{seccontrollabilitywithB}
In this subsection, we prove some technical lemmas that imply the null controllability
of \eqref{linearizedbary-Bf-Cauchy} with controls which are derivatives of smooth enough functions having a small support. This is needed to ensure that the controls are in the image of $\mathcal B$ (this is exactly Assumption \hyperlink{assu1}{$\mathcal{A}_1$}) and to take into account Remark~\ref{not-so-regular}.

The first lemma is the following one (remind that $\overline y$ is one of the trajectories constructed in Section~\ref{secconstrbar}). It consists in a Carleman estimate with curl observation. We call $D_{\overline y}$ the operator
$$D_{\overline y}z:=(\overline y.\nabla)z-
\begin{pmatrix} \partial_{x_1}\overline y^1z^1+\partial_{x_1}\overline y^2z^2+\partial_{x_1}\overline y^3z^3\\
\partial_{x_2}\overline y^1z^1+\partial_{x_2}\overline y^2z^2+\partial_{x_2}\overline y^3z^3\\
\partial_{x_3}\overline y^1z^1+\partial_{x_3}\overline y^2z^2+\partial_{x_3}\overline y^3z^3
\end{pmatrix}.$$
$D_{\overline y}$ is exactly the opposite of the adjoint operator of $y\mapsto (\overline y.\nabla y)+(y.\nabla)\overline y$ (because $\overline y$ is divergence-free).
\begin{Lemma} \label{Carl1}
Let $\theta : \Omega \rightarrow [0,+\infty)$ be a lower semi-continuous function which is not identically $0$ and let $r\in (0,1)$. There exists $C_1>0$ such that, for every $K_1>C_1$, every $\nu\geqslant K_1(1-r)/r$, there exists $\varepsilon^0$ such that for every $\varepsilon\in (0,\varepsilon^0)$, there exists $C>0$ such that, for every $g\in L^2((0,T)\times\Omega)^3$ and for every  solution $z\in L^2((0,T),V)\cap L^\infty((0,T),H)$ of the adjoint of the linearized Navier-Stokes system
\begin{equation}\left\{\begin{aligned}
-z_t-\Delta z-D_{\overline y}z+\nabla \pi&=g&\mbox{ in } Q,
\\\nabla\cdot z&=0&\mbox{ in } Q,
\\z&= 0&\mbox{ on }\Sigma,
\\z(T)&=z^T&\in V,
 \label{Sbis-adj}
\end{aligned} \right.\end{equation}
 one has
\begin{multline}||e^{\frac{-K_1}{2r(T-t)^5}}z||^2_{L^2((T/2,T),H^1(\Omega)^3)}+||z(T/2,\cdot )||_{L^2(\Omega)^3}^2\\\leqslant C\left(\int_{(T/2,T)\times\Omega}\theta e^{-\frac{K_1}{(T-t)^5}}|\nabla\wedge z|^2+\int_{(T/2,T)\times\Omega}e^{-\frac{K_1}{(T-t)^5}}|g|^2\right ) .\label{Carleman-good}
\end{multline}
\end{Lemma}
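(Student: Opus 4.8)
The plan is to prove the Carleman estimate \eqref{Carleman-good} by combining a standard Carleman estimate for the Stokes system with curl observation with a careful treatment of the lower-order term $D_{\overline y}z$, which is what forces the appearance of the parameters $K_1$, $\nu$ and $\varepsilon$. First I would recall (or reprove, following \cite{MR2503028} and \cite{CG1}) a global Carleman inequality for the Stokes-type adjoint system
\begin{equation*}
-z_t-\Delta z+\nabla\pi=G,\quad \nabla\cdot z=0,\quad z|_\Sigma=0,
\end{equation*}
in which the observation is the curl $\nabla\wedge z$ localized in a neighborhood of the support of $\theta$, and where the weights are of the form $e^{-s\alpha}$, $\xi^{k}$ with the classical Fursikov--Imanuvilov weight functions $\alpha(t,x)=(e^{\lambda m\|\eta\|_\infty}-e^{\lambda(\eta(x)+m\|\eta\|_\infty)})/\ell(t)$ and $\ell(t)$ blowing up at $t=0$ and $t=T$. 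Here the key point, already available in the literature, is that for the Stokes system one can indeed observe through the curl alone (the pressure is eliminated by taking the curl of the equation, which kills $\nabla\pi$), at the cost of estimating $z$ itself through a Poincaré-type argument using $\nabla\cdot z=0$ and the boundary condition. Since we only need the estimate on $(T/2,T)$, I would localize in time by a cutoff that equals $1$ on $(T/2,T)$ and is supported away from $t=0$, at the price of a term $\|z\|_{L^2((T/4,T/2)\times\Omega)}$ on the right-hand side, which is then absorbed by energy estimates for \eqref{Sbis-adj} run backward from $t=T/2$ — this is also how one produces the term $\|z(T/2,\cdot)\|_{L^2}^2$ on the left.

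The main work is to absorb the first-order term $D_{\overline y}z$. Writing \eqref{Sbis-adj} as $-z_t-\Delta z+\nabla\pi=g+D_{\overline y}z$ and applying the Stokes Carleman estimate with $G=g+D_{\overline y}z$, one gets on the right-hand side a term $C\int e^{-s\alpha}\xi^{?}|D_{\overline y}z|^2$. Now $D_{\overline y}z$ is a combination of $\overline y$ and $\nabla\overline y$ times $z$; by construction in Section~\ref{secconstrbar}, $\overline y=\varepsilon a(t)(\ldots)$, so $|D_{\overline y}z|\leqslant C\varepsilon\, a(t)\,(|z|+\ldots)$, and near $t=T$, $a(t)=e^{-\nu/(T-t)^5}$. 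The whole point of choosing $\nu\geqslant K_1(1-r)/r$ (equivalently $K_1\leqslant \nu r/(1-r)$, so $K_1/r \leqslant \nu+ K_1$ roughly) and then $\varepsilon$ small is that the factor $a(t)^2=e^{-2\nu/(T-t)^5}$ combined with the Carleman weight $e^{-s\alpha}\sim e^{-C/(T-t)}$ beats the weight $e^{-K_1/(T-t)^5}$ appearing on the left of \eqref{Carleman-good}: concretely, one checks that
\begin{equation*}
\varepsilon^2 a(t)^2\, e^{-s\alpha(t)}\,\xi^{?}\ \leqslant\ C\,\varepsilon^2\, e^{-\frac{K_1}{r(T-t)^5}}\qquad\text{on }(T/2,T)\times\Omega,
\end{equation*}
so that this term can be absorbed into the left-hand side $\|e^{-K_1/(2r(T-t)^5)}z\|^2_{L^2((T/2,T),H^1)}$ once $\varepsilon<\varepsilon^0$ is small enough. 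I would first fix $K_1>C_1$ (where $C_1$ is the constant from the Stokes Carleman estimate controlling which powers of $\xi$ and which size of $s$ are needed), then $\nu$ in the stated range, then $\varepsilon^0$ so that the absorption inequality above holds with a small constant, and finally obtain $C$.

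A few technical points need care. One must make sure the left-hand side weight $e^{-K_1/(2r(T-t)^5)}$ is \emph{weaker} than the Carleman weight on the relevant region, so that after absorbing $D_{\overline y}z$ one can still drop down from $e^{-s\alpha}$ to $e^{-K_1/(2r(T-t)^5)}$; this is a matter of comparing exponents of $1/(T-t)$ and explains the exponent $5$ chosen in \eqref{propa} and the factor $1/(2r)$ versus $1/r$. Also, the right-hand side of \eqref{Carleman-good} carries the weight $e^{-K_1/(T-t)^5}$ (with $\theta$ in front of the curl term), which is consistent because in the Stokes Carleman estimate the observation term and the source term come with the same, strong weight $e^{-s\alpha}$; one then only needs $e^{-s\alpha}\leqslant C e^{-K_1/(T-t)^5}$ near $t=T$, again a comparison of exponents. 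Finally, the energy estimate producing $\|z(T/2,\cdot)\|_{L^2}^2$ and absorbing the intermediate time slice uses that $z\in L^2((0,T),V)\cap L^\infty((0,T),H)$ solves the backward linearized system with the Lipschitz (in fact smooth, compactly supported) coefficient $\overline y$, so Grönwall applies on $(T/4,T/2)$ with a constant depending on $\|\overline y\|_\infty$, hence on $\varepsilon$ and $\nu$, which is harmless since those are fixed before $C$. The main obstacle is organizing this chain of parameter choices ($C_1\rightsquigarrow K_1\rightsquigarrow\nu\rightsquigarrow\varepsilon^0\rightsquigarrow C$) so that every absorption step is legitimate and uniform, and in verifying the elementary but delicate exponent comparisons between $e^{-s\alpha}$, $a(t)^2$, and the target weights.
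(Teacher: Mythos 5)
Your overall architecture is the same as the paper's: a Carleman estimate for the Stokes operator with curl observation (the paper invokes \cite[Proposition 3.1]{gueye2012}), with $D_{\overline y}z$ treated as a source term whose absorption is paid for by the decay $|\overline y|+|\nabla \overline y|\leqslant C\varepsilon e^{-\nu/(T-t)^5}$, followed by the Poincar\'e step exploiting $\nabla\cdot z=0$ and $z|_\Sigma=0$ to pass from $\nabla\wedge z$ to $z$, and by energy estimates producing $\|z(T/2,\cdot)\|_{L^2}^2$. The ordering of the parameter choices is also essentially correct.

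There is, however, a genuine gap in the quantitative absorption step, and it is not a technicality: as written, your scheme cannot reach the regime $r$ close to $1$, which is the only regime in which the lemma is actually used (Proposition~\ref{cor-une-seule-composante} needs $r>7322/7323$, so $\nu=K_1(1-r)/r$ is a tiny fraction of $K_1$). The structural feature you gloss over is that in the curl-observation Carleman estimate the source term carries only the spatially constant weight $e^{-2s\alpha^*}$, while the left-hand side carries $e^{-2s\alpha-2s\alpha^*}\xi^3$; the source weight thus exceeds the left-hand weight by the unbounded factor $e^{2s\alpha}\xi^{-3}$, so absorbing $\varepsilon^2e^{-2s\alpha^*}e^{-2\nu/(T-t)^5}(|z|^2+|\nabla z|^2)$ forces $e^{-2\nu/(T-t)^5}\lesssim e^{-2s\alpha}$ as $t\to T$, i.e. $\nu\gtrsim s\,\alpha(T-t)^5$ there (taking $\varepsilon$ small only helps away from $t=T$). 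On the other hand, the passage to \eqref{Carleman-good} pins $s\,\alpha(T-t)^5$ between $K_1/2$ and $K_1/(2r)$ (the Carleman weight must dominate $e^{-K_1/(r(T-t)^5)}$ at every $x$ and be dominated by $e^{-K_1/(T-t)^5}$ on the observation set), which leaves you with $\nu\geqslant K_1/4$ up to constants --- incompatible with $\nu=K_1(1-r)/r\to 0$ as $r\to 1$. The paper's way out is the modified weight $e^{-2s\alpha-2\mu s\alpha^*}$ of \eqref{Carlb}, with a new large parameter $\mu$ chosen after $r$: the associated rates $K_0,K_1$ of \eqref{defK0}--\eqref{defK1} satisfy $K_0/K_1=\frac{1+\mu+\sqrt{\mu}}{\mu-\sqrt{\mu}}\to 1^+$, so the absorption condition $\nu\geqslant K_0-K_1$ becomes compatible with $\nu\geqslant K_1(1-r)/r$ for every $r\in(0,1)$. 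This device (or an equivalent one) is the heart of the lemma and is missing from your argument. A secondary point: your weight $\alpha\sim 1/\ell(t)$ with $\ell$ vanishing to first order gives $e^{-s\alpha}\sim e^{-C/(T-t)}$, which decays far too slowly to be dominated by $e^{-K_1/(T-t)^5}$ in the observation term; you flag the exponent issue, but you do need $\ell(t)^5$ in the denominator, as in the paper's choice $(t-T/2)^5(T-t)^5$.
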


\textbf{Proof of Lemma~\ref{Carl1}.}

In this proof, our system, which is initially defined on $(0,T)$, will only be considered on the interval of time $(T/2,T)$. In fact, in the following, (see, in particular, the proof of Proposition~\ref{cor-regular-controls}), we will not act on the system on the interval $(0,T/2)$, hence we only need a Carleman estimate on $(T/2,T)$. For our proof, we need to use the particular form of our $\overline y$ in time, in particular that (see \eqref{propa})
\begin{gather}
\label{estimatebary}
|\overline y(t,x)|+|\nabla\overline y(t,x)|\leqslant C\varepsilon e^{\frac{-\nu}{(T-t)^5}}, \, \forall (t,x)\in (T/2,T)\times \Omega.
\end{gather}
Without loss of generality, we may assume that there exists a nonempty open subset $\omega^*$ of $\Omega$ such that $\theta = 1_{\omega^*}$. Let us now give some other notations.
Let $\eta^0\in C^2(\overline\Omega)$ such that $\eta>0$ and $|\nabla \eta^0|>0$ in $\overline\Omega\setminus\omega^*$ and $\eta^0=0$ on $\partial\Omega$.
For the existence of $\eta^0$, see \cite[Lemma 1.1, p.~4]{96FIbook}.
Let us call
$$\alpha(t,x):=\frac{e^{12\lambda ||\eta^0||_\infty}-e^{\lambda (10||\eta^0||_\infty+\eta^0(x))}}{(t-T/2)^5(T-t)^5},\text{    }\xi(t,x):=\frac{e^{\lambda (10||\eta^0||_\infty+\eta^0(x))}}{(t-T/2)^5(T-t)^5}$$ and
$$\alpha^*(t):=\max_{x\in\overline\Omega} \alpha(t,x).$$
We call $Q_{/2}:=(T/2,T)\times\Omega$.
 Using \eqref{estimatebary} and \cite[Proposition 3.1, p.~6]{gueye2012} on the adjoint system \eqref{Sbis-adj} (where we see the first and zero order terms of this equation as a second member, because Proposition 3.1 of \cite{gueye2012} concerns only the Stokes system), one has, for some $C$ large enough, $\lambda\geqslant C$ and $s\geqslant C$,

\begin{multline*}s^3\lambda^4\int_{Q_{/2}} e^{-2s\alpha-2s\alpha^*}\xi^3|\nabla\wedge z|^2+s\lambda^2\int_{Q_{/2}} e^{-2s\alpha-2s\alpha^*}\xi|\nabla(\nabla\wedge z)|^2\\\leqslant C\left(s^3\lambda^4\int_{(T/2,T)\times\omega^*} e^{-2s\alpha-2s\alpha^*}\xi^3|\nabla\wedge z|^2+\varepsilon^2\int_{Q_{/2}} e^{-2s\alpha^*}e^{-\frac{2\nu}{(T-t)^5}}(|z|^2
\right .\\\left.+|\nabla z|^2)+\int_{Q_{/2}} e^{-2s\alpha^*}|g|^2\right).\end{multline*}

In fact, looking carefully at the proof of Proposition 3.1 of \cite{gueye2012}, one remarks (just by changing the weight $\rho(t):=e^{-s\alpha^*}$ by $\rho(t):=e^{-\mu s\alpha^*}$ where $\mu>1$ is a parameter that can be chosen as large as we wish) that the previous inequality can be improved in the following way, as soon as $s$ is large enough, for every $\mu>1$ (the constant $C$ depends on $\mu$):
\begin{multline}s^3\lambda^4\int_{Q_{/2}} e^{-2s\alpha-2\mu s\alpha^*}\xi^3|\nabla\wedge z|^2+s\lambda^2\int_{Q_{/2}} e^{-2s\alpha-2\mu s\alpha^*}\xi|\nabla(\nabla\wedge z)|^2\\\leqslant C\left(s^3\lambda^4\int_{(T/2,T)\times\omega^*} e^{-2s\alpha-2\mu s\alpha^*}\xi^3|\nabla\wedge z|^2+\varepsilon^2\int_{Q_{/2}} e^{-2\mu s\alpha^*}e^{-\frac{2\nu}{(T-t)^5}}(|z|^2
\right .\\\left.+|\nabla z|^2)+\int_{Q_{/2}} e^{-2\mu s\alpha^*}|g|^2\right).\label{Carlb}\end{multline}

As usual, we now change our weights so that they do not vanish at time $t=T/2$.
Let us call $l:[T/2,T]\rightarrow \mathbb R$ defined by
$l(t)=T^2/16$ on $[T/2,3T/4]$ and
$l(t)=(t-T/2)(T-t)$ on $[3T/4,T]$.
Next, we define
$$\beta(t,x):=\frac{e^{12\lambda ||\eta^0||_\infty}-e^{\lambda (10||\eta^0||_\infty+\eta^0(x))}}{l^5(t)},\text{    }
\gamma(t,x):=\frac{e^{\lambda (10||\eta^0||_\infty+\eta^0(x))}}{l^5(t)},$$
$$\beta^*(t):=\max_{x\in\overline\Omega} \beta(t,x),\text{    }\gamma^*(t):=\max_{x\in\overline\Omega} \gamma(t,x).$$
Clearly, the functions $\alpha$ and $\beta$ coincide on $[3T/4,T]$, as well as the functions $\xi$ and $\gamma$. Using classical energy arguments, we deduce the existence of $C$ (depending now on $s,\lambda$, which are assumed to be large enough and fixed from now on, and $\mu $) such that
\begin{multline}\int_{Q_{/2}} e^{-2(1+\mu )s\beta^*}|\nabla\wedge z|^2+\int_{Q_{/2}} e^{-2(1+\mu )s\beta^*}|\nabla(\nabla\wedge z)|^2\\\leqslant C\left(\int_{(T/2,T)\times\omega^*} e^{-2\mu s\beta^*}\gamma^{*3}|\nabla\wedge z|^2+\varepsilon^2\int_{Q_{/2}} e^{-2\mu s\beta^*}e^{-\frac{2\nu}{(T-t)^5}}(|z|^2+|\nabla z|^2)\right .\\\left .+\int_{Q_{/2}} e^{-2\mu s\beta^*}|g|^2\right)\label{carl-int}.\end{multline}

One remarks that, since $\nabla\cdot z=0$ in $Q$ and $z=0$ on $(0,T)\times\partial\Omega$, one has

\begin{gather}\label{Poin1}C||e^{-(1+\mu )s\beta^*}\nabla\wedge z||_{L^2(Q_{/2})^3}^2\geqslant ||e^{-(1+\mu )s\beta^*}\nabla z||_{L^2(Q_{/2})^9}^2.\end{gather}
Using Poincar\'{e}'s inequality, we also have
\begin{gather}\label{Poin2}C||e^{-(1+\mu )s\beta^*}\nabla z||_{L^2(Q_{/2})^9}^2\geqslant ||e^{-(1+\mu )s\beta^*}z||_{L^2(Q_{/2})^3}^2.\end{gather}
Putting this into \eqref{carl-int}, one obtains
\begin{multline}\int_{Q_{/2}} e^{-2(1+\mu )s\beta^*}|\nabla z|^2+\int_{Q_{/2}} e^{-2(1+\mu )s\beta^*}|z|^2\\\leqslant C\left(\int_{(T/2,T)\times\omega^*} e^{-2\mu s\beta^*}\gamma^{*3}|\nabla\wedge z|^2+\varepsilon\int_{Q_{/2}} e^{-2\mu s\beta^*}e^{-\frac{2\nu}{(T-t)^5}}(|z|^2+|\nabla z|^2)\right .\\\left .+\int_{Q_{/2}} e^{-2\mu s\beta^*}|g|^2\right).
\label{ineqabeta}
\end{multline}
Let us define, for $\mu >1$,
\begin{gather}
\label{defK0}
K_0:=2^{6}(1+\mu +\sqrt{\mu })s\frac{e^{12\lambda ||\eta^0||_\infty}-e^{10\lambda ||\eta^0||_\infty}}{T^5},
\\
\label{defK1}
K_1:=2^{6}(\mu -\sqrt{\mu })s\frac{e^{12\lambda ||\eta^0||_\infty}-e^{10\lambda ||\eta^0||_\infty}}{T^5}.
\end{gather}
From equality \eqref{defK1}, one deduces the existence of $C^*>0$ (depending on $\mu >1$, $\lambda>>1$ and $s>>1$) such that
\begin{gather}
\label{comp-poids-ok-K1}
e^{-2\mu s\beta^*(t)}\gamma^{*3}(t)\leqslant C^*e^{\frac{-K_1}{(T-t)^5}}, \, \forall t\in (T/2,T).
\end{gather}
Moreover, from equality \eqref{defK0}, there exists $\widehat C>0$ (depending on $\mu >1$, $\lambda>>1$ and $s>>1$) such that
\begin{gather}
\label{comp-poids-ok-K0}
e^{-\frac{K_0}{(T-t)^5}}\leqslant \widehat C e^{-2(1+\mu )s\beta^*}, \, \forall t\in (T/2,T).
\end{gather}
 Fixing $s,\lambda$ and making  $\mu \rightarrow + \infty$, one easily sees that $K_0/K_1=\frac{1+\mu +\sqrt{\mu }}{\mu -\sqrt{\mu }}\rightarrow 1^+$ so that for every $r\in(0,1)$, we have for $\mu $ large enough,  $K_0<K_1/r$.  For $\varepsilon>0$ small enough and for $\nu$ large enough ($\nu\geqslant K_0-K_1$), one can absorb the undesired terms $\varepsilon^2\int_{Q_{/2}} e^{-2\mu s\beta^*}e^{-\frac{2\nu}{(T-t)^5}}(|z|^2+|\nabla z|^2)$ from the right-hand side of \eqref{ineqabeta}. Then using some classical energy estimates together with \eqref{comp-poids-ok-K1} and \eqref{comp-poids-ok-K0}, one obtains \eqref{Carleman-good}.
\cqfd

 From now on, we set
\begin{gather*}
\rho_r(t):=e^{\frac{-K_1}{r(T-t)^5}}, \, \rho_1(t):=e^{\frac{-K_1}{(T-t)^5}}.
\end{gather*}
Let us now derive from this Carleman inequality a result of null-controllability with controls which are derivatives of smooth functions.
Let $\widehat 1_{\omega_0}: \R^3\rightarrow [0,1]$ be a function of class $C^\infty$ which is not identically equal to $0$ and
having a support included in $\omega_0$, where $\omega_0$ was introduced in Lemma~\ref{lemmalowerP}. We apply Lemma~\ref{Carl1} with
$\theta = \widehat 1_{\omega_0}$.  One has the following proposition.

\begin{Proposition}
\label{cor-regular-controls}
With the notations of Lemma~\ref{Carl1}, let $f\in L^2(Q)^3$ be such that ${\rho_r}^{-1/2}f\in L^2(Q)^3$ and let
us consider the following linearized Navier-Stokes control system
\begin{equation}\left\{\begin{aligned}
y_t-\Delta y+(\overline y\cdot \nabla)y+(y\cdot \nabla)\overline y+\nabla p&=f+\nabla\wedge((\nabla\wedge v)\widehat 1_{\omega_0})&\mbox{ in } Q,
\\\nabla\cdot y&=0&\mbox{ in } Q,
\\y&= 0&\mbox{ on }\Sigma, \label{Sbis-curl}
\end{aligned} \right.\end{equation}
where the control is $v$. Then, for every $y_0\in V$, there exists a solution  $(y,p,v)$ of $\eqref{Sbis-curl}$  such that $y(0,\cdot )=y^0$ and for every  $\tilde{K}_1$ verifying $0<\tilde{K}_1<K_1$,
\begin{gather}
\label{bonne-reg}
e^{\frac{\tilde{K}_1(2-1/r)}{2(T-t)^5}}(\nabla\wedge v){\widehat 1_{\omega_0}}\in L^2((0,T),H^{53}(\Omega)^3)\cap H^{27}((0,T),H^{-1}(\Omega)^3),
\\
e^{\frac{\tilde{K}_1}{2(T-t)^5}}y\in L^2((0,T),H^{2}(\Omega)^3)\cap L^{\infty}((0,T),H^1(\Omega)^3).
\end{gather}

\end{Proposition}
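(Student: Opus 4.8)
\textbf{Proof strategy for Proposition~\ref{cor-regular-controls}.} The plan is to combine the Fursikov--Imanuvilov duality method, built on the Carleman estimate of Lemma~\ref{Carl1}, with a bootstrap argument that converts the super-exponential decay of the weights at $t=T$ into high-order regularity of the control. A preliminary reduction is that the control only has to act on $(T/2,T)$: since $\overline y$ (and hence all the zero-order coupling terms) is supported in $[T/4,T]$, and since the exponential weights $\rho_1,\rho_r$ are bounded above and below by positive constants on $[0,T/2]$, one may first let the free solution of $y_t-\Delta y+(\overline y\cdot\nabla)y+(y\cdot\nabla)\overline y+\nabla p=f$ with $y(0,\cdot)=y^0$ evolve on $[0,T/2]$, obtaining $y(T/2,\cdot)\in V$ by parabolic regularity, and then solve the controllability and regularity problem on $Q_{/2}=(T/2,T)\times\Omega$ with initial datum $y(T/2,\cdot)$ at time $t=T/2$; the weights in \eqref{bonne-reg} are harmless near $t=0$, so concatenating the two pieces loses nothing.

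On $Q_{/2}$ I would introduce the vector space $\mathcal P$ of smooth divergence-free fields vanishing on $\Sigma$ and the symmetric bilinear form $\mathfrak a(z,\hat z):=\iint_{Q_{/2}}\rho_1\,g_z\cdot g_{\hat z}+\iint_{Q_{/2}}\widehat 1_{\omega_0}\,\rho_1\,(\nabla\wedge z)\cdot(\nabla\wedge\hat z)$, where $g_z:=-z_t-\Delta z-D_{\overline y}z+\nabla\pi_z$ is the source of the adjoint system \eqref{Sbis-adj} satisfied by $z$, the pressure $\pi_z$ being chosen so that the first integrand is a genuine inner product of divergence-free vector fields. Applying Lemma~\ref{Carl1} with $\theta=\widehat 1_{\omega_0}$ shows that $\mathfrak a$ is a scalar product on $\mathcal P$ and that its completion $\mathcal P_0$ embeds continuously into $\{z:\rho_r^{1/2}z\in L^2((T/2,T),H^1(\Omega)^3)\}$ and into $\{z:z(T/2,\cdot)\in L^2(\Omega)^3\}$. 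Since $\rho_r^{-1/2}f\in L^2(Q)^3$ and $y(T/2,\cdot)\in L^2(\Omega)^3$, the linear functional $\hat z\mapsto\iint_{Q_{/2}}f\cdot\hat z+\int_\Omega y(T/2,\cdot)\cdot\hat z(T/2,\cdot)$ is continuous on $\mathcal P_0$, and the Riesz representation theorem provides a unique $z\in\mathcal P_0$ representing it. Setting $(\nabla\wedge v)\widehat 1_{\omega_0}:=-\rho_1\,\widehat 1_{\omega_0}\,(\nabla\wedge z)$ and letting $y$ be the solution of \eqref{Sbis-curl} on $Q_{/2}$ with this $v$ and the datum $y(T/2,\cdot)$, the variational identity, read against test fields $\hat z$, yields after integration by parts both that $y$ coincides with the Leray projection of $\rho_1 g_z$ and that $y(T,\cdot)=0$ (the latter because $\rho_1$ vanishes to infinite order at $t=T$). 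Concatenating with the free part on $[0,T/2]$ gives a solution of \eqref{Sbis-curl} on all of $Q$ with $y(0,\cdot)=y^0$.

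It then remains to establish \eqref{bonne-reg}. The key structural fact is that $\rho_1(t)\,(T-t)^{-N}$ stays bounded for every $N\in\mathbb N$, while $\rho_r(t)/\rho_1(t)=\exp\!\big(-K_1(1/r-1)/(T-t)^5\big)$, so that each additional space or time derivative can be ``paid for'' by an arbitrarily large negative power of $(T-t)$, which is in turn absorbed by the gap between the two weights supplied by the left-hand side of the Carleman estimate. Starting from $z\in\mathcal P_0$, I would iterate: differentiate the adjoint Stokes system in $t$ and $x$, multiply by weights of the form $e^{c/(T-t)^5}$ with slowly increasing $c$ below the threshold permitted by Lemma~\ref{Carl1}, and apply at each step local parabolic regularity inside $\omega_0$ (for the control) and global parabolic regularity plus the elliptic estimate for the pressure (for the state), exactly as in the iteration schemes of \cite{04FGIP} and \cite{gueye2012}. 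Climbing this ladder the required number of times produces the announced Sobolev orders; the precise values $53$ and $27$ are dictated by the order $17$ of the operator $\mathcal M$ of Proposition~\ref{prop-solvable} (which will be applied to $v$ in Subsection~\ref{recolle}) together with the two curls appearing in the control operator.

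The main obstacle is precisely this last bootstrap. One must propagate high-order weighted estimates through the coupled system while simultaneously tracking \emph{two} distinct weights --- a faster-decaying $\rho_r$ for the state $y$, which inherits only the limited regularity of the source $f$, and a slower-decaying $\rho_1$ for the control $v$, which can be made as smooth as desired --- and while coping with the curl-curl structure of the forcing term and with the elliptic problem for the pressure reappearing at each differentiation. Some care is also needed at $t=T/2$ to glue the controlled and uncontrolled parts without creating a spurious singularity there, which is exactly the role of the cut-off function $l(t)$ inserted in the proof of Lemma~\ref{Carl1}.
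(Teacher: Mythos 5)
Your overall architecture is the right one and matches the paper's: restrict the control to $(T/2,T)$, run a penalized HUM/Riesz argument built on Lemma~\ref{Carl1} with $\theta=\widehat 1_{\omega_0}$, and take the control of the form $-\rho_1\widehat 1_{\omega_0}(\nabla\wedge \widehat z)$ where $\widehat z$ is the Riesz minimizer. The genuine gap is in how you propose to reach the regularity \eqref{bonne-reg}. With the bilinear form you write, $\mathfrak a(z,\hat z)=\iint\rho_1 g_z\cdot g_{\hat z}+\iint\widehat 1_{\omega_0}\rho_1(\nabla\wedge z)\cdot(\nabla\wedge\hat z)$, the Euler--Lagrange identity forces the controlled state to be (the Leray projection of) $\rho_1 g_{\widehat z}$, i.e.\ the source of the adjoint system solved by $\widehat z$ is $g_{\widehat z}=\rho_1^{-1}y$. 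Your bootstrap then asks for interior parabolic regularity of $\widehat z$ in $\omega_0$; but each such step gains only two space derivatives over $g_{\widehat z}$, and $g_{\widehat z}$ inherits the regularity of the state $y$, which is capped at roughly $L^2((T/2,T),H^2)$ because $f$ is merely a weighted $L^2$ function with no interior smoothness in $\omega_0$. Differentiating the adjoint system $53$ times in space requires differentiating $\rho_1^{-1}y$ the same number of times, which is impossible. The weight gap between $\rho_r$ and $\rho_1$ buys decay in $t$ near $T$, not Sobolev derivatives in $x$; so the ladder you describe stalls after one or two rungs and cannot produce $H^{53}$ in space and $H^{27}$ in time for the control.

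The device the paper uses to break this deadlock --- and which is absent from your proposal --- is to change the scalar product in the Lax--Milgram/Riesz step: the ``equation part'' of the bilinear form is taken to be $\langle\sqrt{\rho_1}Sz,\sqrt{\rho_1}Sw\rangle_{X_{26}}$, where $X_m:=\mathcal D(\mathcal S^m)$ is the scale generated by the (adjoint) linearized Navier--Stokes operator with terminal condition, rather than the plain $L^2_{\rho_1}$ pairing. Membership of $\widehat z$ in the resulting completion $\mathcal H$ then directly encodes $\sqrt{\rho_1}S\widehat z\in X_{26}$, i.e.\ $\widehat z$ is automatically about $54$ space derivatives and $27$ time derivatives smooth (in the weighted sense), which is exactly what \eqref{bonne-reg} asserts for the control $-\rho_1\nabla\wedge(\widehat 1_{\omega_0}\nabla\wedge\widehat z)$. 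The price is paid on the state side: the Euler--Lagrange equation now yields $\widehat y=\sqrt{\rho_1}(\mathcal S^*)^{26}\mathcal S^{26}(\sqrt{\rho_1}S\widehat z)\in X_{-26}$, a priori only a distribution of negative order, and one must climb back to $X_0$ by the dual regularity Lemma~\ref{reg-dual}, trading at each step an arbitrarily small loss $\tilde K_1<K_1$ in the exponential weight. In short: the high regularity of the control is not obtained by bootstrapping the adjoint equation (your route, which fails), but is built into the variational formulation itself, and the bootstrap is performed instead on the \emph{state}, in the negative-order scale $X_{-m}$, to undo the $(\mathcal S^*)^{26}\mathcal S^{26}$ introduced by that formulation.
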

\begin{Remark}\label{Launching-flowers}What is important in the previous proposition is the fact the controls are very regular (which is quite new and interesting in itself) and that the controls are derivatives (in fact curls) of functions, as in \cite{MR2371118}. In the following, it is enough to obtain a regularity $L^2((0,T),H^{53}(\Omega)^3)\cap H^{27}((0,T),H^{-1}(\Omega)^3)$ for $e^{\tilde{K}_1(2-1/r)/(2(T-t)^5)}(\nabla\wedge v){\widehat 1_{\omega_0}}$ but the following proof can be easily adapted to deduce controls $v$ with $$e^{\tilde{K}_1(2-1/r)/(2(T-t)^5)}(\nabla\wedge v){\widehat 1_{\omega_0}}\in L^2((0,T),H^{2m+1}(\Omega)^3)\cap H^{m+1}((0,T),H^{-1}(\Omega)^3)$$ for every given
$m$ as large as one wants.
\end{Remark}

\textbf{Proof of Proposition~\ref{cor-regular-controls}.}

 In the following, we only control on the interval of time $(T/2,T)$, i.e. we set $v=0$ on $(0,T/2)$ and let the corresponding solution $(y,p)$ of \eqref{Sbis-curl} on $(0,T/2)$ evolve naturally until time $T/2$. Let $y^{T/2}=y(T/2, \cdot)$.

Let  $P:L^2(0,L)^3\rightarrow L^2(0,L)^3$ be the Leray projector $P\varphi:=\varphi -\nabla p$, where $\Delta p=\text{ div }
\varphi $ in $\Omega$ and $\partial p/\partial n=\varphi \cdot n$ on $\partial \Omega$, ($n$ is the unit outward normal vector on $\partial \Omega$). Since $P\Delta \varphi = \Delta P\varphi$ for every $\varphi \in C^\infty_0(\Omega)^3$, $P$ can be extended as a continuous linear map from $H^{-1}(\Omega)^3$ to $H^{-2}(\Omega)^3$. We still denote by $P$ this extension. Let $S: \mathcal{D}'((T/2,T),H^1_0(\Omega)^3)\rightarrow
\mathcal{D}'((T/2,T), H^{-2}(\Omega)^3)$  and $S^*: \mathcal{D}'((T/2,T); H^1_0(\Omega)^3)\rightarrow
\mathcal{D}'((T/2,T), H^{-2}(\Omega)^3)$ be defined by
\begin{gather}
\label{defN}
Sz:=-z_t-P\left(\Delta z+D_{\overline y}z\right),
\\
\label{defN*}
S^*z:=z_t-P\left(\Delta z-(\overline y\cdot\nabla) z - (z\cdot\nabla) \overline y\right).
\end{gather}
($S^*$ corresponds to the linearized time-dependent Navier-Stokes operator and, formally, $S$ is the adjoint of $S^*$).

Since $y^{T/2}$ is regular enough, one can assume from now on without loss of generality that $y^{T/2}=0$ by adding some suitable term in the source term $f$ (that we still call $f$) that still satisfies ${\rho_r}^{-1/2}f\in L^2(Q)^3$, and one can always assume that $P f=f$ by changing the pressure.
We define a closed linear unbounded operator $\mathcal{S}: L^2(Q_{/2})^3\rightarrow L^2(Q_{/2})^3$ by
\begin{gather}
\label{domainecalN}
\mathcal D(\mathcal S):=\{z\in L^2((T/2,T),H^1_0\cap H^2(\Omega)^3)\cap H^1((T/2,T),L^2(\Omega)^3)|z(T,\cdot)=0\},
\\
\mathcal{S}z:=-z_t-P\left(\Delta z+D_{\overline y} z\right).
\end{gather}
We call
$$X_m:=\mathcal D({\mathcal S}^m)$$
and $$X_{-m}:=X_m',$$
where the pivot space is $L^2(Q_{/2})^3$.
For every $(k,l)\in \mathbb Z^2$ such that $k\leqslant l$, one has
$$X_l\subset X_k.$$
Moreover $X_m$ is an Hilbert space for the scalar product
$$<z_1,z_2>_{X_m}:=<\mathcal S ^mz_1,\mathcal S^mz_2>_{L^2(Q_{/2})^3}.$$
The associated norm is denoted $||.||_{X_m}$.
For $m\in\mathbb N$, one can define $\mathcal{S}^*$ as an operator from $X_{-m}$ into $X_{-m-1}$ by setting, for every $z_1\in X_{-m-1}$ and $z_2\in X_{m+1}$,
\begin{gather}
\label{defdualL}<\mathcal{S}^*z_1,z_2>_{X_{-m-1},X_{m+1}}:=<z_1,\mathcal S z_2>_{X_{-m},X_m}.\end{gather}
(One easily checks that this definition is consistent: it gives the same image if $z_1$ is also in $X_{-m'}$ for some $m'\in \mathbb{N})$.
This implies in particular that, for every $z_1\in L^2(Q_{/2})^3$ and for every $z_2\in X_m$, one has, for every $0\leqslant j\leqslant l$,
\begin{gather}\label{dualLkl}<({\mathcal S}^*)^lz_1,z_2>_{X_{-l},X_{l}}=<({\mathcal S}^*)^{l-j} z_1,(\mathcal S)^jz_2>_{X_{j-l},X_{l-j}}.
\end{gather}
Let $\mathcal{H}_0$ be the set of
 $z\in H^1((T/2,T), L^2(\Omega)^3)\cap L^2((T/2,T),H^2(\Omega)\cap V)$  such that
\begin{gather}
\label{condition1}
\sqrt{\rho_1} S z\in X_{26},
\\
\label{condition2}
\sqrt{\widehat 1_{\omega_0}\rho_1}(\nabla\wedge z) \in L^2(Q/2)^3.
\end{gather}
Let $a$ be the following bilinear form defined on $\mathcal{H}_0$:
$$
a(z,w):=<\sqrt{\rho_1}S z,\sqrt{\rho_1}S w>_{X_{26}}+\int_{Q_{/2}}\widehat 1_{\omega_0}\rho_1(\nabla\wedge z).(\nabla\wedge w).$$
 From  \eqref{Carleman-good}, we deduce that $a$ is a scalar product on $\mathcal{H}_0$. Let $\mathcal{H}$ be the completion of  $\mathcal{H}_0$ for this scalar product. Note that, still from \eqref{Carleman-good} and also from the definition of
 $\mathcal{H}$,  $\mathcal{H}$ is a subspace of $L^2_{loc}([T/2,T),H^1_0(\Omega)^3)$ and, for every $z\in \mathcal{H}$,
 one has \eqref{condition1},  \eqref{condition2} and
\begin{gather}
\label{estzH}
||{\rho_r}^{1/2}z||_{L^2((T/2,T),H^1(\Omega)^3)}\leqslant C\sqrt{a(z,z)}, \mbox{ }\forall z\in \mathcal{H}.
\end{gather}
 Let us now consider the linear form $l$ defined on $\mathcal{H}$  by
$$l(w):=\int_{Q_{/2}} fw.$$ The linear form $l$ is well-defined and continuous on $\mathcal{H}$ since,
 by the Cauchy-Schwarz inequality together with \eqref{estzH}, one has, for every $w\in \mathcal{H}$,
\begin{equation}
\left\{\begin{aligned}\label{wpLM}\int_{Q_{/2}} |fw|
&\leqslant ||{\rho_r}^{-1/2}f||_{L^2(Q_{/2})^3}||{\rho_r}^{1/2}w||_{L^2(Q_{/2})^3}
\\\mbox{ }&\leqslant C||{\rho_r}^{-1/2}f||_{L^2(Q_{/2})^3}\sqrt{a(w,w)}.\end{aligned} \right . \end{equation}
Applying the Riesz representation theorem, there exists a unique
\begin{gather}
\label{hatzinH}
\widehat z\in \mathcal{H}
\end{gather}
 verifying, for every
$w\in \mathcal{H}$,

\begin{gather}\label{Lax-M}
<\mathcal S^{26}(\sqrt{\rho_1}S\widehat z),\mathcal S^{26}(\sqrt{\rho_1} S w)>_{L^2(Q_{/2})^3}-\int_{Q_{/2}} \widehat u  w=\int_{Q_{/2}} fw,
\end{gather}
with
\begin{gather}
\label{defhatu}
\widehat u:= -\rho_1\nabla\wedge(\widehat 1_{\omega_0}\nabla\wedge \widehat z).
\end{gather}
We then set
\begin{gather}
\label{def-tildey}
\tilde y:= (\mathcal S^*)^{26}\mathcal {S}^{26}(\sqrt{\rho_1} S \widehat z)\in X_{-26}.
\end{gather}
We want to gain regularity on $\tilde y$ (by accepting to have a weaker exponential decay rate for $\tilde y$ when $t$ is close to $T$).
Let $\psi \in C^\infty([T/2,T])$ and $y\in X_{-1}$. One can define $\psi y\in X_{-1}$ by the following way. Since $\mathcal S^*:X_{0}\rightarrow X_{-1}$ is onto, there exists  $h\in X_0$ such that
$\mathcal S^*h=y$. We define $\psi y $ by
\begin{gather}
\psi y= \psi \mathcal S^*h:=\psi 'h-\mathcal S^*( \psi h).
\end{gather}
This definition is compatible with the usual definition of $\psi y$ if $y\in X_0$. We can then define by induction on $m$
$\psi y\in X_{-m}$ for $\psi \in C^\infty([T/2,T])$ and $y\in X_{-m}$ in the same way.
Using \eqref{def-tildey}, this allows us to define
\begin{gather}
\label{defhaty}
\widehat y:= \sqrt{\rho_1} \tilde y\in X_{-26}.
\end{gather}
 From \eqref{Lax-M}, \eqref{defhatu}, \eqref{def-tildey} and \eqref{defhaty}, one gets
\begin{gather}
\label{eqhaty}
 \mathcal{S}^* \widehat y= f +\widehat u \text{ in }X_{-27}.
\end{gather}
Let $\tilde K_1\in(0,K_1)$ and
$\tilde \rho_1:=e^{-\tilde{K}_1/(T-t)^5}$. Using \eqref{defhatu}, \eqref{def-tildey} and \eqref{eqhaty}, one has
\begin{gather}
\label{eqN*y}
\mathcal{S}^* \left(\left(\sqrt {\rho_1}/\sqrt {\tilde \rho_1}\right)\tilde y\right)
=\left(1/\sqrt{\tilde {\rho_1}}\right)'\sqrt{\rho_1} \tilde y +\left(1/\sqrt {\tilde \rho_1}\right) (f+\widehat u) \in X_{-26}.
\end{gather}
We want to deduce from \eqref{eqN*y} that $\tilde y$ is more regular. This can be achieved thanks to the following lemma:
\begin{Lemma}\label{reg-dual} Let $m\in \mathbb{N}$.
If $y\in X_{-m}$ and $\mathcal S^*y\in X_{-m}$, then $y\in X_{-m+1}$.
\end{Lemma}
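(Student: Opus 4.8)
The plan is to obtain Lemma~\ref{reg-dual} by pure duality on the scale $(X_k)_{k\in\mathbb Z}$, exploiting that $\mathcal S$ is, at each level, a bijective isometry. First I would record that for every integer $k\geqslant 1$ the operator $\mathcal S:X_k\to X_{k-1}$ is a bijective isometry. It is an isometry by the very definition of the norms $\|z\|_{X_k}=\|\mathcal S^k z\|_{L^2(Q_{/2})^3}$ (in particular $\mathcal S$ is injective on $X_1=\mathcal D(\mathcal S)$, i.e.\ backward uniqueness holds for the adjoint linearized system). Surjectivity for $k=1$ amounts to the $X_1$-solvability of the backward problem $-z_t-P(\Delta z+D_{\overline y}z)=g$, $z(T,\cdot)=0$ for arbitrary $g\in L^2(Q_{/2})^3$, which is the standard parabolic well-posedness already underlying the surjectivity of $\mathcal S^*:X_0\to X_{-1}$ used above; surjectivity for $k>1$ then follows by iteration, since if $g\in X_{k-1}\subset L^2(Q_{/2})^3$ and $z\in X_1$ solves $\mathcal S z=g$, then $\mathcal S z=g\in\mathcal D(\mathcal S^{k-1})$ forces $z\in\mathcal D(\mathcal S^k)=X_k$, with $\|z\|_{X_k}=\|g\|_{X_{k-1}}$.

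Next I would note that, by the definition \eqref{defdualL}, for every $k\geqslant 0$ the operator $\mathcal S^*:X_{-k}\to X_{-k-1}$ is exactly the transpose of $\mathcal S:X_{k+1}\to X_k$ under the pivot-space identifications $X_{-k}=(X_k)'$, $X_{-k-1}=(X_{k+1})'$; hence it too is a bijective isometry, and it is consistent from one level to the next (this is the consistency remark made just after \eqref{defdualL}). Since moreover $X_k$ is dense in $X_{k-1}$ (apply the isomorphism $\mathcal S^{k-1}:X_{k-1}\to L^2(Q_{/2})^3$ and use that $\mathcal D(\mathcal S)$, which contains $C^\infty_c((T/2,T)\times\Omega)^3$, is dense in $L^2(Q_{/2})^3$), membership of $y$ in $X_{-m+1}$ is equivalent to saying that the functional $y$ on $X_m$ extends to a bounded linear functional on $X_{m-1}$.

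Then I would conclude as follows. Let $y\in X_{-m}$ with $\mathcal S^*y\in X_{-m}$. Given $w\in X_{m-1}$, use the bijectivity of $\mathcal S:X_m\to X_{m-1}$ to write $w=\mathcal S v$ with $v\in X_m$, $\|v\|_{X_m}=\|w\|_{X_{m-1}}$, and set $\Lambda(w):=\langle\mathcal S^*y,v\rangle_{X_{-m},X_m}$, which is meaningful precisely because $\mathcal S^*y\in X_{-m}$. Then $\Lambda$ is linear and $|\Lambda(w)|\leqslant\|\mathcal S^*y\|_{X_{-m}}\,\|w\|_{X_{m-1}}$, so $\Lambda\in(X_{m-1})'=X_{-m+1}$; and $\Lambda$ extends $y$, because for $w\in X_m$ one has $v=\mathcal S^{-1}w\in X_{m+1}$, whence by \eqref{defdualL} and the consistency of $\mathcal S^*$, $\Lambda(w)=\langle\mathcal S^*y,v\rangle_{X_{-m-1},X_{m+1}}=\langle y,\mathcal S v\rangle_{X_{-m},X_m}=\langle y,w\rangle_{X_{-m},X_m}$. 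Hence $y=\Lambda\in X_{-m+1}$, as claimed. In this argument the only genuinely analytic ingredient is the surjectivity of $\mathcal S:X_1\to L^2(Q_{/2})^3$ (equivalently, the $X_1$-solvability of the backward linearized Navier--Stokes system with $L^2$ source and zero terminal data), which is classical parabolic theory and is in effect already invoked in the paper; I therefore expect the only delicate point to be bookkeeping — keeping track of which duality bracket $\langle\cdot,\cdot\rangle_{X_{-j},X_j}$ is in play at each step and applying the consistency of $\mathcal S^*$ at the right moment — rather than anything substantive.
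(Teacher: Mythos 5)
Your argument is correct and is essentially the paper's own proof: both lift a test function through the surjection $\mathcal S$ (you lift $w\in X_{m-1}$ to $v\in X_m$, the paper lifts $h\in X_m$ to $\alpha\in X_{m+1}$), use \eqref{defdualL} together with the consistency of $\mathcal S^*$ across levels to rewrite $\langle y,\cdot\rangle$ as $\langle\mathcal S^*y,\cdot\rangle$, and conclude from the resulting bound in the $X_{m-1}$-norm plus density. The only point you leave untreated is $m=0$, where the duality scheme does not apply ($\mathcal S$ is not defined from $X_0$ onto $X_{-1}$) and which the paper dispatches separately by standard parabolic regularity for the linearized system --- a case that is in any event never invoked, since the bootstrap in Proposition~\ref{cor-regular-controls} stops at $X_0$, i.e.\ at $m=1$.
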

\textbf{Proof of Lemma~\ref{reg-dual}. }

If $m=0$, Lemma~\ref{reg-dual} follows from usual estimates on usual regularity property of solutions of the linearized Navier-Stokes system.  From now on, we assume that $m\in \mathbb{N}^*$. Let $h\in X_m$.  Since $\mathcal S$ is an operator from $X_{m+1}$ onto $X_{m}$, there exists $\alpha\in X_{m+1}$ such that
$\mathcal{S} \alpha=h$. Thanks to \eqref{defdualL}, one has
\begin{gather}\label{iop}
<y,h>_{X_{-m},X_{m}}=<\mathcal S^* y,\alpha>_{X_{-m-1},X_{m+1}}=<\mathcal S^*y,\alpha>_{X_{-m},X_{m}},\end{gather}
the last equality coming from the fact that $\mathcal{S}^*y\in X_{-m}$. We deduce from \eqref{iop} that there exists some constant $C>0$ such that for every $h\in X_m$,
\begin{gather}\label{goodxm}|<y,h>|_{X_{-m},X_{m}}\leqslant C ||\alpha||_{X_{m}}=C||h||_{X_{m-1}},\end{gather}
which shows that $y\in X_{1-m}$. This concludes the proof of Lemma~\ref{reg-dual}.
\cqfd
 From \eqref{eqN*y} and Lemma~\ref{reg-dual}, one gets that
$$\left(\sqrt{\rho_1}/\sqrt {\tilde \rho_1}\right)\tilde y\in X_{-25},\,\forall \tilde K_1\in (0,K_1) .$$
Using an easy induction argument together with Lemma~\ref{reg-dual} (and the fact that one can choose $\tilde K_1<K_1$ arbitrarily close to $K_1$), we deduce that, for every $\tilde K_1\in (0,K_1)$,
$\left(\sqrt{\rho_1}/\sqrt {\tilde \rho_1}\right)\tilde y\in X_{0}$.

Let us now focus on $\widehat u$. Let us call $v:=\rho_1 \widehat z$. Using \eqref{estzH}, one gets that
\begin{gather}\label{vx0}{\rho_1}^{-1}{\rho_r}^{1/2}v\in L^2(Q_{/2}).\end{gather}
Using \eqref{hatzinH} together with regularity results for $S$ applied on ${\tilde \rho_1}^{-1}{\rho_r}^{1/2}v\in L^2(Q_{/2}) $ and, as above for the proof of \eqref{vx0}, a bootstrap argument (together with the fact that one can choose $\tilde K_1\in (0,K_1)$ arbitrarily close to $K_1$), one obtains that
\begin{gather}
\label{reg-cont-ok}{\tilde\rho_1}^{-1}{\rho_r}^{1/2}v\in X_{27}, \, \forall \tilde K_1\in (0,K_1).
\end{gather}
From \eqref{reg-cont-ok} and \eqref{eqhaty}, we deduce (by looking the equation verified by $(1/\sqrt {\tilde \rho_1}) \widehat y$ and using usual regularity results for linearized Navier-Stokes system) that 
\begin{gather}
\label{reg-sol-ok}
\left(1/\sqrt {\tilde \rho_1}\right) \widehat y\in L^2((T/2,T),H^{2}(\Omega)^3)\cap L^{\infty}((T/2,T),H^1_0(\Omega)^3), \, \forall \tilde K_1\in (0,K_1).
\end{gather}
Proposition~\ref{cor-regular-controls}  follows from \eqref{eqhaty}, \eqref{reg-sol-ok} and \eqref{reg-cont-ok}.
\cqfd

\subsection{Null-controllability of \texorpdfstring{\eqref{linearizedbary}}{}}
\label{recolle}
To finish, one can gather the results of Subsection~\ref{secalgebraicsolvability} and Subsection~\ref{seccontrollabilitywithB} in order to apply Proposition~\ref{prop-alg-controllability} and obtain a controllability result on \eqref{linearizedbary}. However, we cannot work in the $C^\infty$ setting of Proposition~\ref{prop-alg-controllability}, so we need to take into account Remark~\ref{not-so-regular} and to be careful concerning the spaces we are working with.
\label{recolle-morceaux}
\begin{Proposition}
\label{cor-une-seule-composante}
 For every $T>0$ small enough, for every $\alpha\in(0,1)$, there exists $r_0\in (0,1)$ such that for every $r\in(r_0,1)$, there exists $C_1>0$ such that for every $K_1>C_1$, for every $f\in L^2(Q)$ be such that $e^{\frac{K_1}{2r(T-t)^5}}f\in L^2(Q)^3$ and for every $y^0\in V$, if
\begin{gather}
\label{defnu}
\nu = \frac{1-r}{r}K_1,
\end{gather}
there exists a solution $(y,p,v)$ of the following linearized Navier-Stokes control system
\begin{equation}\left\{\begin{aligned}
y_t-\Delta y+(\overline y\cdot \nabla)y+(y\cdot \nabla)\overline y+\nabla p&=f+(0,0,1_{\omega_0}v)&\mbox{ in } Q,
\\\nabla\cdot y&=0&\mbox{ in } Q,\\
y(0,\cdot)&=y^0&\mbox{ in } \Omega,\\
y&= 0&\mbox{ on }\Sigma \label{Sbis-final},
\end{aligned} \right.\end{equation}
such that
\begin{gather}
e^{\frac{\alpha{K}_1}{2(T-t)^5}}y\in  L^2((0,T),H^{2}(\Omega)^3)\cap L^\infty((0,T),H^1(\Omega)^3), \label{decr-y}
\\ p\in L^2((0,T),H^1(\Omega)), \, e^{\frac{\alpha{K}_1}{2(T-t)^5}}v\in  L^2(Q).
\label{decr-w}
\end{gather}
\end{Proposition}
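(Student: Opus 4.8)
The proof is an application of the scheme of Proposition~\ref{prop-alg-controllability}, in the relaxed (weighted, non-$C^\infty$) form described in Remark~\ref{not-so-regular}: Assumption \hyperlink{assu1}{$\mathcal{A}_1$} will be supplied by Proposition~\ref{cor-regular-controls} and Assumption \hyperlink{assu2}{$\mathcal{A}_2$} by Proposition~\ref{prop-solvable}. First I fix the parameters. Given $T$ small enough (below the $T^*$ of Lemma~\ref{lemmalowerP}) and $\alpha\in(0,1)$, I set $r_0:=1/(2-\alpha)$, so that $2-1/r>\alpha$ for every $r\in(r_0,1)$, and pick such an $r$. I let $\omega_0$ and $\varepsilon^*$ be the open subset of $\mathcal{C}_2$ and the threshold of Lemma~\ref{lemmalowerP} (note that $\det P$, being a polynomial in $E,S,X$, and hence $\omega_0$, does not depend on $\nu$, thus not on $r$ or $K_1$), I apply Lemma~\ref{Carl1} with $\theta=\widehat 1_{\omega_0}$ to obtain the constant $C_1$, and for $K_1>C_1$ I observe that the choice $\nu=(1-r)K_1/r$ of \eqref{defnu} is exactly the threshold allowed by Lemma~\ref{Carl1}. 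Finally I fix $\varepsilon$ smaller than $\varepsilon^*$ and than the $\varepsilon^0$ produced by Lemma~\ref{Carl1}, so that both Proposition~\ref{prop-solvable} and Proposition~\ref{cor-regular-controls} apply with $(\overline y,\overline p,\overline u)$ the trajectory of Section~\ref{secconstrbar} associated to these $\varepsilon$ and $\nu$; I also pick $\tilde K_1\in\big(\alpha K_1/(2-1/r),\,K_1\big)$, which is nonempty because $r>r_0$, and I set $c:=\tilde K_1(2-1/r)/2$, so that $c>\alpha K_1/2$ and (since $2-1/r<1$) also $\tilde K_1>\alpha K_1$.

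Since ${\rho_r}^{-1/2}=e^{K_1/(2r(T-t)^5)}$, the hypothesis on $f$ is precisely ${\rho_r}^{-1/2}f\in L^2(Q)^3$, so Proposition~\ref{cor-regular-controls} provides, for the given $y^0\in V$, a solution $(y^*,p^*,v^*)$ of \eqref{Sbis-curl} with $y^*(0,\cdot)=y^0$, with $g:=(\nabla\wedge v^*)\widehat 1_{\omega_0}$ satisfying $e^{\tilde K_1(2-1/r)/(2(T-t)^5)}g\in L^2((0,T),H^{53}(\Omega)^3)\cap H^{27}((0,T),H^{-1}(\Omega)^3)$ and $e^{\tilde K_1/(2(T-t)^5)}y^*\in L^2((0,T),H^2(\Omega)^3)\cap L^\infty((0,T),H^1(\Omega)^3)$. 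Each component of the forcing $\nabla\wedge g$ has the form $\partial_{x_i}g^k-\partial_{x_j}g^{k'}$, so with $u^*:=(0,g^3,-g^2,-g^3,0,g^1,\partial_{x_1}g^2-\partial_{x_2}g^1)$ one has $\mathcal{B}u^*=\nabla\wedge g$, $\mathcal{B}$ being the operator \eqref{defmathcalB}. Hence $(y^*,p^*)$ solves \eqref{linearizedbary-Bf-Cauchy-*} with this $f$ and $y^*(T,\cdot)=0$ (contained in the decay of $y^*$), while $u^*$ lies in a weighted Sobolev space $H_1$ with weight $e^{c/(T-t)^5}$, $c>0$, carrying at least $52$ spatial and $26$ temporal derivatives of regularity; in particular every element of $H_1$, together with its derivatives up to order $17$, vanishes at $t=T$, because $e^{c/(T-t)^5}$ dominates $(T-t)^{-N}e^{c'/(T-t)^5}$ whenever $c'<c$.

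Now I invoke Proposition~\ref{prop-solvable}, which yields a linear partial differential operator $\mathcal{M}$ of order $17$ with $\mathcal{L}\circ\mathcal{M}=(\mathcal{B},0)$, whose coefficients are smooth on $Q_0$ and grow at most polynomially in $1/(T-t)$ as $t\to T$ (they are built from $P^{-1}$ with $\det P$ bounded below by Lemma~\ref{lemmalowerP}). I set $(\tilde y,\tilde p,\tilde v):=-\mathcal{M}u^*$ on $Q_0$, extended by $0$ to $Q$ (licit since $\mathcal{M}$ is local and $\operatorname{supp}u^*\subset Q_0$), and I put $(y,p,v):=(y^*+\tilde y,p^*+\tilde p,\tilde v)$. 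Exactly as in the proof of Proposition~\ref{prop-alg-controllability}, $(\tilde y,\tilde p,\tilde v)$ solves \eqref{algebr1}, so $(y,p,v)$ solves \eqref{linearizedbary} with source term $f$ and scalar control $\tilde v$; since $\operatorname{supp}\tilde v\subset Q_0\subset(T/2,T)\times\omega_0$ the forcing is $f+(0,0,1_{\omega_0}\tilde v)$, $\tilde y$ is divergence free and vanishes near $\partial\Omega$ and near $t=0$, so $y(0,\cdot)=y^*(0,\cdot)=y^0$; and because applying the order-$17$ operator $\mathcal{M}$ (coefficients polynomially bounded in $1/(T-t)$) to $u^*\in H_1$ produces a function with decay $e^{-c'/(T-t)^5}$ for every $c'<c$, one has $\tilde y(T,\cdot)=0$, hence $y(T,\cdot)=y^*(T,\cdot)=0$. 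Thus $(y,p,v)$ is a solution of \eqref{Sbis-final}.

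It remains to verify \eqref{decr-y}--\eqref{decr-w}. Since $\tilde K_1>\alpha K_1$, $e^{\alpha K_1/(2(T-t)^5)}y^*=e^{(\alpha K_1-\tilde K_1)/(2(T-t)^5)}\big(e^{\tilde K_1/(2(T-t)^5)}y^*\big)\in L^2((0,T),H^{2}(\Omega)^3)\cap L^\infty((0,T),H^1(\Omega)^3)$; and, writing $u^*=e^{-c/(T-t)^5}w$ with $w$ in a fixed-order Sobolev space, $e^{c'/(T-t)^5}\mathcal{M}u^*$ equals a bounded-coefficient differential operator of order $17$ applied to $w$ (the powers of $1/(T-t)$ being absorbed by $e^{-(c-c')/(T-t)^5}$) for any $c'<c$, so choosing $c'\in(\alpha K_1/2,c)$ gives $e^{\alpha K_1/(2(T-t)^5)}\tilde y\in L^2((0,T),H^{2}(\Omega)^3)\cap L^\infty((0,T),H^1(\Omega)^3)$ as well; summing yields \eqref{decr-y}. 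Likewise $e^{\alpha K_1/(2(T-t)^5)}\tilde v\in L^2(Q)$ by the same bookkeeping on the scalar component of $\mathcal{M}u^*$, while $p=p^*+\tilde p$ with $\nabla p^*\in L^2(Q)^3$ (read off the velocity equations of \eqref{linearizedbary-Bf-Cauchy-*}, using ${\rho_r}^{-1/2}f\in L^2(Q)^3$, $e^{\tilde K_1/(2(T-t)^5)}y^*\in L^2 H^2\cap L^\infty H^1$ and $\mathcal{B}u^*\in L^2$) and $\nabla\tilde p\in L^2(Q)^3$, so $p\in L^2((0,T),H^1(\Omega))$ up to a time-dependent constant, which is \eqref{decr-w}. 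The delicate part throughout is precisely this last weighted bookkeeping: one must check that the exponent losses from differentiating the weights $e^{-\tilde K_1(2-1/r)/(2(T-t)^5)}$ up to $17$ times are harmless (domination of powers of $(T-t)^{-1}$ by slightly weaker exponentials), that the precise regularity $L^2 H^{53}\cap H^{27}H^{-1}$ granted by Proposition~\ref{cor-regular-controls} is exactly what lands $(\tilde y,\tilde p,\tilde v)$ in the spaces of \eqref{decr-y}--\eqref{decr-w}, and that the admissible range for $r$ is indeed $\big(1/(2-\alpha),1\big)$.
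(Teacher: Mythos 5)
Your overall architecture is the paper's: combine Proposition~\ref{cor-regular-controls} (Assumption $\mathcal{A}_1$ with controls in the image of $\mathcal B$) with Proposition~\ref{prop-solvable} (Assumption $\mathcal{A}_2$) through the scheme of Proposition~\ref{prop-alg-controllability} in its weighted form, and your identification of $u^*$ from $\nabla\wedge g$ is fine. But there is a genuine quantitative gap at the decisive step: you assert that the coefficients of $\mathcal M$ ``grow at most polynomially in $1/(T-t)$ as $t\to T$ (they are built from $P^{-1}$ with $\det P$ bounded below by Lemma~\ref{lemmalowerP})''. This misreads the lemma. Its proof factors $\det P(E,S,X)=S^m\tilde P(E,S,X)$ and only bounds $\tilde P$ away from zero; the factor $S^m=(\varepsilon a(t))^m=\varepsilon^m e^{-m\nu/(T-t)^5}$ tends to zero faster than any polynomial as $t\to T$. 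Since each entry of $L_0$ (hence of $P$) has degree at most one in $S$, one only knows $m\leqslant 7321$, so the entries of $P^{-1}=\operatorname{adj}(P)/\det P$ — hence the coefficients of $\mathcal M$ — may blow up like $e^{7321\nu/(T-t)^5}$. This exponential loss must be paid out of the decay $e^{-K_1(2-1/r)/(2(T-t)^5)}$ of $u^*$: one gets $e^{K_2/(2(T-t)^5)}(\tilde y,\tilde v)\in L^2$ only for $K_2<K_1(2-1/r)-7321\nu$, and with $\nu=\frac{1-r}{r}K_1$ the requirement $K_2>\alpha K_1$ forces $r>\frac{7322}{7323-\alpha}$, which is the $r_0$ of the paper.

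Concretely, your choice $r_0=1/(2-\alpha)$ is far too small: for, say, $\alpha=3/4$ and $r$ near $0.8$ one has $\nu\approx K_1/4$, so the coefficient blow-up $e^{7321\nu/(T-t)^5}$ overwhelms the decay $e^{-K_1(2-1/r)/(2(T-t)^5)}$ of $u^*$ by a factor of order $e^{1800\,K_1/(T-t)^5}$, and $\tilde y$, $\tilde v$ need not decay at all at $t=T$ — the construction then fails to give $\tilde y(T,\cdot)=0$ or \eqref{decr-y}--\eqref{decr-w}. To repair the argument you must either verify $m=0$ (i.e.\ that $\det P(E,0,X)$ is not the zero polynomial, which the paper does not claim and which would require an extra computation) or, as the paper does, keep the worst-case exponent $7321\nu$ and take $r_0=\frac{7322}{7323-\alpha}$. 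The rest of your bookkeeping (absorbing powers of $(T-t)^{-1}$ into a slightly weaker exponential, the interpolation giving $w\in H^{17}(Q_0)$ so that the order-$17$ operator $\mathcal M$ applies) is consistent with the paper once this is corrected.
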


\textbf{Proof of Proposition~\ref{cor-une-seule-composante}.}

We want to apply Proposition~\ref{prop-alg-controllability}. First of all, we deal with
Assumption \hyperlink{assu1}{$\mathcal{A}_1$}. We apply Proposition~\ref{cor-regular-controls}: There exists a solution  $(y^*,p^*,v^*)$ of $\eqref{Sbis-curl}$   such that $y^*(0,\cdot )=y^0$ and, for every  $\tilde{K}_1$ verifying $0<\tilde{K}_1<K_1$,
\begin{gather}
e^{\frac{\tilde{K}_1(2-1/r)}{2(T-t)^5}}{\widehat 1_{\omega_0}}(\nabla\wedge v^*)\in L^2((T/2,T),H^{53}(\Omega)^3)\cap H^{27}((T/2,T),H^{-1}(\Omega)^3)\label{vetreg},
\\
e^{\frac{\tilde{K}_1}{2(T-t)^5}}y^*\in L^2((T/2,T),H^{2}(\Omega)^3)\cap L^{\infty}((T/2,T),H^1(\Omega)^3)\label{yetreg}.
\end{gather}

Using well-known interpolation results (see for example \cite[Section 13.2, p.~96]{1968-Lions-Magenes-I}) and  setting $n:=27$, we obtain that  $$e^{\frac{\tilde{K}_1(2-1/r)}{2(T-t)^5}}{\widehat 1_{\omega_0}}(\nabla\wedge v^*)\in H^{2n/3}(Q_0)\subset H^{17}(Q_0).$$
Let us call $w:=\widehat 1_{\omega_0}(\nabla\wedge v^*)$, which is supported in $Q_0$.
One observes that
$$\nabla\wedge w=\begin{pmatrix}\partial_{x_3}w^{2}-\partial_{x_2}w^{3}\\ \partial_{x_3}w^{1}-\partial_{x_1}w^{3}\\ \partial_{x_2}w^{1}-\partial_{x_1}w^{2}
\end{pmatrix}.$$
Hence in view of equality \eqref{defmathcalB} and setting
$$f^1=0,f^2=-w^{3},f^3=w^{2},f^4=-w^{3},f^5=0,f^6=w^{1},f^7=\partial_{x_2}w^{1}-\partial_{x_1}w^{2},$$
one has $\nabla\wedge w\in Im(\mathcal B)$ and Assumption \hyperlink{assu1}{$\mathcal{A}_1$} holds.

Now, we observe that Assumption \hyperlink{assu2}{$\mathcal{A}_2$} follows from Proposition~\ref{prop-solvable}. Let $(\tilde y,\tilde p, \tilde v)$
be defined by
$$(\tilde y,\tilde p, \tilde v):=-\mathcal M w,$$
where $\mathcal M$ is as in (the proof of) Proposition~\ref{prop-solvable}. It makes sense to apply $\mathcal M$ to $w$ because $\mathcal M$ is a partial differential operator of order $17$ and $w\in H^{17}(Q_0)$.

Using the fact that the operator $\mathcal{M}$ is a partial differential operator of order $17$ and  that the coefficients of $\mathcal M$ explode at time $t=T$ at rate at most $e^{\frac{7321 \nu}{(T-t)^5}}$, as it follows from the construction of $\overline y$ given in Section~\ref{secconstrbar} (see in particular \eqref{propa}) and the construction of $M$ given in the proof of Proposition~\ref{prop-solvable}, one has
\begin{gather}\label{vreg}e^{\frac{K_2}{2(T-t)^5}}\tilde v\in L^2(Q),\\
\label{yreg}e^{\frac{K_2}{2(T-t)^5}}\tilde y\in L^2(Q),\end{gather}
for every $K_2<K_1(2-1/r)-7321 \nu$. In order to be obtain $\tilde y(T,.)=0$, it is enough to have an exponential decay for $y$ at time $T$, i.e. to impose $$K_1(2-1/r)-7321 \nu>0,$$ which, with \eqref{defnu}, is equivalent to
$$r>\frac{7322}{7323},$$
which can be ensured since $r$ can be arbitrarily chosen close to $1$.
Let $\alpha \in (0,1)$. We set
\begin{gather}
\label{defr0}
r_0:=\frac{7322}{7323-\alpha}.
\end{gather}
Then, if $r\in (r_0,1)$, one has
\begin{gather}
\label{intervalnonvide}
\alpha K_1< K_1- 7321K_1\frac{1-r}{r}.
\end{gather}
By \eqref{intervalnonvide}, there exists $K_2$ such that
\begin{gather}
\label{encadrementK2}
\alpha K_1<K_2< K_1- 7321K_1\frac{1-r}{r}= K_1- 7321\nu.
\end{gather}
Finally, one can apply (the proof of) Proposition~\ref{prop-alg-controllability} and we set $$(y,p,v):= (y^*+\tilde y,p^*+\tilde p, v^*+\tilde v).$$
Thanks to \eqref{vetreg}, \eqref{yetreg}, \eqref{vreg} and \eqref{yreg}, one has
\begin{gather*}
e^{\frac{K_2}{2(T-t)^5}}v\in L^2(Q),
\\
e^{\frac{K_2}{2(T-t)^5}}y\in L^2(Q).
\end{gather*}
Then, using usual regularity results for the linearized Navier-Stokes operators on $e^{\frac{K_2}{2(T-t)^5}}y$ (now considered on the entire time interval $(0,T)$), we obtain
$$e^{\frac{\alpha{K}_1}{2(T-t)^5}}y\in  L^2((0,T),H^{2}(\Omega)^3)\cap L^\infty((0,T),L^2(\Omega)^3),$$
as soon as $y^0\in V$.
The proof of Proposition~\ref{cor-une-seule-composante} is completed.
\cqfd
\section{Proof of Theorem~\ref{th-STLC}}
\label{IL}To conclude, we are going to apply an inverse mapping theorem to go back to the nonlinear system, which is the following (see \cite[Chapter 2, Section 2.3]{MR924574}):
\begin{Proposition}
\label{prop-inverse-mapping}
Let $E$ and $F$ be two Banach spaces. Let $e_0\in E$ and $\mathcal F:E\rightarrow F$ which is of class $C^1$ in a neighborhood of $e_0$. Assume that the operator $d\mathcal F(e_0)\in\mathcal L_c(E,F)$ is onto. Then there exist $\eta>0$ and $C>0$ such that for every $g\in F$ verifying  $||g-\mathcal F(e_0)||<\eta$, there exists $e\in E$ such that
\begin{enumerate}
\item $\mathcal F(e)=g$,
\item $||e-e_0||_E\leqslant C||g-\mathcal F(e_0)||_F$.
\end{enumerate}
\end{Proposition}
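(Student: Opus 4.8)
The plan is to reduce the statement to the open mapping (Banach--Schauder) theorem and then to solve the equation $\mathcal F(e)=g$ by a Newton-type iteration that uses only a norm-bounded right inverse of $L:=d\mathcal F(e_0)$, never a linear or Lipschitz one. This is the key point: the kernel of $L$ need not be complemented in $E$, so no continuous linear right inverse is available in general, and one cannot set up a naive contraction; designing the iteration to bypass this is the only genuine subtlety.

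First I would invoke the open mapping theorem: since $L\in\mathcal L_c(E,F)$ is a bounded surjection between Banach spaces, it is open, so there is $c>0$ with $B_F(0,c)\subset L(B_E(0,1))$. Rescaling, and setting $M:=2/c$, this yields the decisive property that for every $f\in F$ there exists $h\in E$ with $Lh=f$ and $\|h\|_E\le M\|f\|_F$. Next I would use the $C^1$ regularity to control the first-order remainder. Writing $r(e,e'):=\mathcal F(e)-\mathcal F(e')-L(e-e')=\int_0^1\big(d\mathcal F(e'+t(e-e'))-L\big)(e-e')\,dt$ and using continuity of $d\mathcal F$ at $e_0$, for any prescribed $\kappa>0$ there is $\delta>0$ such that $\|r(e,e')\|_F\le\kappa\|e-e'\|_E$ whenever $e,e'\in B_E(e_0,\delta)$. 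I would fix $\kappa$ once and for all so that $M\kappa\le\tfrac12$, and then fix the corresponding $\delta$.

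The core is the iteration. Given $g$ with $b:=g-\mathcal F(e_0)$ small, set $e_1:=e_0$ and, inductively, let $\rho_n:=g-\mathcal F(e_n)$, choose $h_n$ with $Lh_n=\rho_n$ and $\|h_n\|\le M\|\rho_n\|$, and put $e_{n+1}:=e_n+h_n$. The crucial computation is $\rho_{n+1}=g-\mathcal F(e_n+h_n)=\rho_n-\big(\mathcal F(e_{n+1})-\mathcal F(e_n)\big)=\rho_n-Lh_n-r(e_{n+1},e_n)=-r(e_{n+1},e_n)$, whence $\|\rho_{n+1}\|\le\kappa\|h_n\|\le M\kappa\|\rho_n\|\le\tfrac12\|\rho_n\|$. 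Thus $\|\rho_n\|\le 2^{-(n-1)}\|b\|$ and $\|h_n\|\le M\,2^{-(n-1)}\|b\|$ decay geometrically, so $(e_n)$ is Cauchy and converges to some $e\in E$ with $\mathcal F(e)=g$ by continuity; moreover $\|e-e_0\|_E\le\sum_{n\ge1}\|h_n\|\le\frac{M}{1-M\kappa}\|b\|\le 2M\|g-\mathcal F(e_0)\|_F$, which gives the second conclusion with $C:=2M$.

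It remains to close the bookkeeping that keeps every iterate inside $B_E(e_0,\delta)$, where alone the remainder estimate is valid. I would choose $\eta:=\delta/(2M)$: then for $\|g-\mathcal F(e_0)\|<\eta$ the partial sums satisfy $\|e_n-e_0\|\le\sum_{k<n}\|h_k\|\le 2M\|b\|<\delta$ for every $n$, so the induction is legitimate and each invocation of the remainder bound is justified. The main obstacle is precisely this interplay already flagged above: the right inverse furnished by the open mapping theorem is merely norm-bounded and nonlinear, so the argument must use $L$ only through the estimate $\|h\|\le M\|f\|$ and never through Lipschitz continuity of a selection; the Newton iteration is tailored to do exactly that.
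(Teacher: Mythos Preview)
Your argument is correct. The open mapping theorem provides, for the surjective $L=d\mathcal F(e_0)$, a norm-bounded (not linear) right inverse, and your Newton-type iteration with the residual identity $\rho_{n+1}=-r(e_{n+1},e_n)$ together with the $C^1$ remainder bound yields geometric decay, convergence, and the Lipschitz estimate $\|e-e_0\|\le 2M\|g-\mathcal F(e_0)\|$. Your bookkeeping on $\eta=\delta/(2M)$ is exactly what is needed to keep all iterates in the ball where the remainder estimate is valid, and the induction closes as you describe.

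There is nothing to compare with in the paper itself: Proposition~\ref{prop-inverse-mapping} is stated with a reference to \cite[Chapter~2, Section~2.3]{MR924574} and is not proved here; the authors simply invoke it as a known local surjection theorem. Your proof is the standard one (essentially the Lyusternik--Graves argument), and your emphasis that only a \emph{bounded} selection is needed---so that no complementation of $\ker L$ is assumed---is the right observation for the Banach-space generality required. In short, you have supplied a clean self-contained proof where the paper merely cites one.
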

 We are going to use the same techniques as in \cite{MR2558423}.
Let $\alpha\in (0,1)$, and let us consider some $r\in (r_0,1)$ where $r_0$ verifies \eqref{defr0}.
We apply Proposition~\ref{prop-inverse-mapping} with $E$ and $F$ defined in the following way.
Let $E$ be the space of the functions
$$(y,p,v)\in L^2(Q)^3\times L^{2}(Q)\times L^{2}(Q)$$ such that
\begin{enumerate}
\item $e^{\frac{\alpha{K}_1}{2(T-t)^5}}y\in L^\infty((0,T),V)^3\cap L^{2}((0,T),H^2(\Omega)^3\cap V)$,
\item $\nabla p\in L^2(Q)$,
\item $e^{\frac{\alpha{K}_1}{2(T-t)^5}}v\in L^{2}(Q)^3$ and the support of $v$ is included in $Q_0$,
\item $e^{\frac{K_1}{2r(T-t)^5}}(y_t-\Delta y
+(\overline y\cdot \nabla)y+(y\cdot \nabla)\overline y +\nabla  p-(0,0,v))\in L^2(Q)^3$,
\item $y(0,\cdot )\in V$,
\end{enumerate}
 equipped with the following norm which makes it a Banach space:
\begin{multline*}
||(y,p,v)||_E:= ||e^{\frac{\alpha{K}_1}{2(T-t)^5}}y||_{L^\infty((0,T),H^{1}_0(\Omega)^3)\cap L^{2}((0,T),H^2(\Omega)^3)}\\+||p||_{L^2((0,T),H^1(\Omega))}+ ||e^{\frac{\alpha{K}_1}{2(T-t)^5}}v||_{L^2(Q)^3}\\+||e^{\frac{K_1}{2r(T-t)^5}}(y_t-\Delta+(\overline y\cdot \nabla)y+(y\cdot \nabla)\overline y+\nabla p-(0,0,v))||_{L^2(Q)^3}\\+||y(0,\cdot )||_{H_0^1(\Omega)^3}.
\end{multline*}
Let $F$ be the space of the functions $(h,y^0)\in L^2(Q)^3\times V$ such that
$$
e^{\frac{\alpha{K}_1}{2(T-t)^5}}h\in L^2(Q)^3,
$$
equipped with the following scalar product which makes it a Hilbert space:
$$((h,y^0)|(k,z^0))=(e^{\frac{\alpha{K}_1}{2(T-t)^5}}h|e^{\frac{\alpha{K}_1}{2(T-t)^5}}k)_{L^{2}(Q)^3}+(y^0|z^0)_{H_0^1(Q)^3}.$$
We define $$\mathcal F(y,p,v)=(y_t-\Delta y+(y\cdot \nabla)y+(\overline y\cdot \nabla)y+(y\cdot \nabla)\overline y+\nabla p-(0,0,v),y(0,\cdot )).$$
To apply the previous inverse mapping theorem, we first show the following lemma.
\begin{Lemma}
\label{lem-A-regular}
The map $\mathcal F$ has its image included in $F$ and is of class $C^1$ on $E$.
\end{Lemma}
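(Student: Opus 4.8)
The plan is to split $\mathcal{F}$ into its affine part and its quadratic part and to handle each separately. Write $\mathcal{F}=\mathcal{F}_{\mathrm{lin}}+\mathcal{F}_{\mathrm{quad}}$, where
\[
\mathcal{F}_{\mathrm{lin}}(y,p,v):=\bigl(y_t-\Delta y+(\overline y\cdot\nabla)y+(y\cdot\nabla)\overline y+\nabla p-(0,0,v),\,y(0,\cdot)\bigr),\qquad \mathcal{F}_{\mathrm{quad}}(y,p,v):=\bigl((y\cdot\nabla)y,\,0\bigr).
\]
For $\mathcal{F}_{\mathrm{lin}}$ there is almost nothing to prove: by item~(4) in the definition of $E$, the first component of $\mathcal{F}_{\mathrm{lin}}(y,p,v)$ multiplied by $e^{K_1/(2r(T-t)^5)}$ lies in $L^2(Q)^3$; since $r\in(0,1)$ and $\alpha\in(0,1)$ we have $\alpha r<1$, hence $e^{\alpha K_1/(2(T-t)^5)}\leqslant e^{K_1/(2r(T-t)^5)}$ on $(0,T)$, so the first component weighted by $e^{\alpha K_1/(2(T-t)^5)}$ also lies in $L^2(Q)^3$; moreover $y(0,\cdot)\in V$ by item~(5). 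Thus $\mathcal{F}_{\mathrm{lin}}$ is a bounded linear map from $E$ into $F$, in particular of class $C^\infty$, and it is its own differential.

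The substance of the lemma is the quadratic term, and the plan is to show that $(y^a,y^b)\mapsto \bigl((y^a\cdot\nabla)y^b,0\bigr)$ defines a continuous bilinear map from $E\times E$ into $F$. Given two elements of $E$, set $z^a:=e^{\alpha K_1/(2(T-t)^5)}y^a$ and $z^b:=e^{\alpha K_1/(2(T-t)^5)}y^b$, which by item~(1) belong to $L^\infty((0,T),H^1_0(\Omega)^3)\cap L^2((0,T),H^2(\Omega)^3)$ with norms controlled by $\|\cdot\|_E$. Since
\[
e^{\frac{\alpha K_1}{2(T-t)^5}}(y^a\cdot\nabla)y^b=e^{-\frac{\alpha K_1}{2(T-t)^5}}(z^a\cdot\nabla)z^b\quad\text{and}\quad e^{-\frac{\alpha K_1}{2(T-t)^5}}\leqslant 1,
\]
it suffices to bound $(z^a\cdot\nabla)z^b$ in $L^2(Q)^3$. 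For a.e.\ $t\in(0,T)$, using the Sobolev embedding $H^2(\Omega)\hookrightarrow L^\infty(\Omega)$ (valid since $\Omega\subset\R^3$ is a smooth bounded domain),
\[
\|(z^a\cdot\nabla)z^b\|_{L^2(\Omega)^3}\leqslant \|z^a\|_{L^\infty(\Omega)^3}\,\|\nabla z^b\|_{L^2(\Omega)^9}\leqslant C\,\|z^a(t,\cdot)\|_{H^2(\Omega)^3}\,\|z^b(t,\cdot)\|_{H^1(\Omega)^3},
\]
and integrating in time gives $\|(z^a\cdot\nabla)z^b\|_{L^2(Q)^3}\leqslant C\,\|z^a\|_{L^2((0,T),H^2(\Omega)^3)}\,\|z^b\|_{L^\infty((0,T),H^1(\Omega)^3)}$, which is finite and bounded by $C\|(y^a,p^a,v^a)\|_E\,\|(y^b,p^b,v^b)\|_E$. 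Taking $y^a=y^b=y$ shows in particular that $\mathcal{F}_{\mathrm{quad}}$ maps $E$ into $F$; together with the first paragraph this proves that the image of $\mathcal{F}$ is contained in $F$.

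Finally, I would read off the $C^1$ regularity from these estimates. A continuous bilinear map between Banach spaces is of class $C^\infty$, so $\mathcal{F}_{\mathrm{quad}}$, being the composition of the diagonal embedding $E\to E\times E$ with the bilinear map above, is of class $C^\infty$ with $d\mathcal{F}_{\mathrm{quad}}(y,p,v)\cdot(h,q,w)=\bigl((h\cdot\nabla)y+(y\cdot\nabla)h,\,0\bigr)$; adding the affine map $\mathcal{F}_{\mathrm{lin}}$ yields $\mathcal{F}\in C^\infty(E;F)\subset C^1(E;F)$. I expect the only genuinely delicate point to be the weighted estimate for the quadratic term: it works precisely because the nonlinearity may be allowed to decay at the same Carleman rate $e^{-\alpha K_1/(2(T-t)^5)}$ as $y$ itself, leaving one factor $e^{-\alpha K_1/(2(T-t)^5)}\leqslant 1$ to spare, while for the linear terms the stronger weight $e^{K_1/(2r(T-t)^5)}$ built into the definition of $E$ is exactly what is needed in order to land in the weaker weight $e^{\alpha K_1/(2(T-t)^5)}$ of $F$.
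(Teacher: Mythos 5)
Your proof is correct and follows essentially the same route as the paper: the same splitting of $\mathcal F$ into its affine part and the quadratic part $((y\cdot\nabla)y,0)$, with the quadratic part treated as a continuous bilinear map on $E\times E$ via the $L^\infty((0,T),H^1)$ and $L^2((0,T),H^2)$ bounds from item~(1) of the definition of $E$ together with the embedding $H^2(\Omega)\hookrightarrow L^\infty(\Omega)$. The only (harmless) difference is that you estimate $e^{\frac{\alpha K_1}{2(T-t)^5}}(y\cdot\nabla)y$, which is exactly what membership in $F$ requires and costs nothing beyond $e^{-\frac{\alpha K_1}{2(T-t)^5}}\leqslant 1$, whereas the paper proves the stronger bound with the weight $e^{\frac{K_1}{2r(T-t)^5}}$ by writing the weight as the square of $e^{\frac{K_1}{4r(T-t)^5}}$, at the price of the auxiliary condition $K_1/r<2\alpha K_1$.
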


\begin{proof}
We see that $\mathcal F= {\mathcal F}_1+ {\mathcal F}_2$ with
$${\mathcal F}_1(y,p,v):=(y_t-\Delta y+(\overline y\cdot \nabla)y+(y\cdot \nabla)\overline y+\nabla p-(0,0,v),y(0,\cdot )).$$
and
$${\mathcal F}_2(y,p,v):=((y\cdot \nabla)y,0).$$

Thanks to the construction of $E$ and $F$ we have
${\mathcal F}_1: E\rightarrow F$ and ${\mathcal F}_1$ is continuous,
so, since ${\mathcal F}_1$ is linear, ${\mathcal F}_1$ is of class $C^1$. The map ${\mathcal F}_2$
is a quadratic form, hence to prove that it maps $E$ into $F$ and is of class $C^1$,
it is sufficient to prove that it is continuous, i.e.  to prove that
\begin{gather}
\label{tobeprovedforC}
||e^{\frac{K_1}{2r(T-t)^5}}(y\cdot \nabla)y||_{L^2(Q)^3}\leqslant C ||(y,p,v)||^2_E.
\end{gather}
We choose $r$ and $\alpha$ so that
\begin{gather}
\frac{K_1}{r}<2\alpha{K}_1 \label{poidsproches}.
\end{gather}
(One can take for example $\alpha =3/4$ and $r\in (0,1)$ close enough to $1$)
Let us call
\begin{gather}
\label{deftildeyexp}
\tilde y(t,x):=e^{\frac{K_1}{4r(T-t)^5}}y.
\end{gather}
This definition of $\tilde{y}$ and inequality \eqref{poidsproches} imply that
$$||\tilde{y}||_{L^\infty((0,T),H^1(\Omega)^3)}\leqslant C ||(y,p,v)||_E,$$
which gives that
\begin{gather}
\label{ineqnabla}
||\nabla \tilde{y}||_{L^\infty((0,T),L^2(\Omega)^9)}\leqslant C ||(y,p,v)||_E.
\end{gather}
We also have
\begin{gather}
\label{ineqtildeyL2H2}
||\tilde{y}||_{L^2((0,T),H^2(\Omega)^3)}\leqslant C ||(y,p,v)||_E.
\end{gather}
A classical Sobolev embedding in dimension $3$ together with \eqref{ineqtildeyL2H2} imply that
\begin{gather}
\label{ineqtildeyL2Linfty}
||\tilde{y}||_{L^{2}((0,T),L^\infty(\Omega)^3)}\leqslant C ||(y,p,v)||_E.
\end{gather}
Direct computations imply that
\begin{gather}
\label{ineqtildeytildeyL2L2}
||(\tilde y\cdot \nabla)\tilde{y}||_{L^2((0,T),L^2(\Omega)^3)}\leqslant ||\nabla \tilde{y}||_{L^\infty((0,T),L^2(\Omega)^9)}||\tilde{y}||_{L^2((0,T),L^\infty(\Omega)^3)}.
\end{gather}
 From \eqref{ineqnabla}, \eqref{ineqtildeyL2Linfty} and \eqref{ineqtildeytildeyL2L2}, we obtain
$$||(\tilde y \cdot \nabla)\tilde{y}||_{L^2((0,T),L^2(\Omega)^3)}\leqslant C ||(y,p,v)||^2_E,$$
which, together with \eqref{deftildeyexp}, gives \eqref{tobeprovedforC}.
\end{proof}

We now consider the element $e_0=(0,0,0)$ and we compute $$d \mathcal F(e_0)(y,q,v)
=y_t-\Delta y+(\overline y\cdot \nabla)y+(y\cdot \nabla)\overline y +\nabla p-(0,0,v).$$
Proposition~\ref{cor-une-seule-composante} implies that this application is onto. Hence,
taking $g=(0,y^0)$ and applying Proposition~\ref{prop-inverse-mapping},
Theorem~\ref{th-STLC} easily follows (in particular because the trajectory $\overline y$ can be chosen as small as we want since $\varepsilon$ can be arbitrarily small).

\section*{Acknowledgments}
The authors would like to thank Sergio Guerrero for fruitful discussions concerning Proposition~\ref{cor-regular-controls}.
\appendix
\section{Appendix: Creation of the matrix \texorpdfstring{${L_0}$}{}}\label{program}
In this appendix, we explain how the matrix ${L_0}$ at point $\xi^0$ (which
represents all the differentiated equations of System \eqref{syst1} up to the
order $19$) was created.
The program is written in $C^{++}$, using the library uBLAS which is well-adapted to the manipulation of sparse matrices. It is a parallel openMP algorithm,
using 8 cores. We are not going to give all the technical details but
just explain rapidly the spirit of the algorithm.
To simplify, we will assume that the following ``black boxes'' (that had to be created) are at our disposal:
\begin{enumerate}
\item An evaluation function {\tt  ep } which evaluates a polynomial (represented by a vector) at $\xi^0$. This evaluation function can be created so that it can verify that $\xi^0$ is not a root of the polynomial $P(0,.,.)$. (one just has to see if the evaluation is equal to $0$ whereas the polynomial has nonzero coefficients).
\item A derivation function {\tt deqex } which differentiates an equation of level $m$ with respect to $x_1,x_2
,x_3$ or $t$.
\end{enumerate}
A partial differential equation which is a derivative of order $m$ of some of
the equations of \eqref{syst1} will be represented in a
matricial form in the following way: We know that there are at most $F(m+3)$ derivatives appearing, and we observe that the coefficients
are polynomials in $(x_1,x_2,x_3)$ of an order less than $4$ (it is a vector
space of dimension $35$). Hence an equation of order $m$ is represented by a matrix
with $F(m+3)$ lines and $35$ columns, where on each line one can find the
coefficient of the partial derivatives of $z^1$ (or $z^2$ appearing)
corresponding to the number of this line, thanks to the natural bijection
between $\mathbb N^4$ and $\mathbb N$.
Since we have $3$ equations in \eqref{syst1} and $2$ unknowns ($z^1$ and $z^2$), one can write the matrix $M$ in the following way:
\begin{gather}\label{dec-mat}\begin{pmatrix}
A_1&B_1\\A_2&B_2\\A_3&B_3
\end{pmatrix}.\end{gather}
For $i=1,2,3$, $A_i$ represents the derivatives of $z^1$ appearing in the derivatives of the $i$-th equation of \eqref{syst1} and $B_i$ those of $z^2$. Hence, we can compute these $A_i$ and $B_i$ separately and then gather them to obtain ${L_0^0}$.

The algorithm is the following. We explain it for the first equation of \eqref{syst1} and for the unknown $z^1$ (i.e. for $A_1$, but it is the same for the other matrices).
\begin{enumerate}
\item We create a matrix $e$ that represents the equation. We use \emph{ep} to fill the line of ${L_0^0}$ corresponding to the equation in a .txt file under the form $i$ $j$ $A_1(i,j)$. We create a matrix $h$ which is empty for the moment. In fact in $e$ we will keep the equations of level $m-1$ and in $h$ we will fill the equations of level $m$.
\item We create a ``for'' loop on $m$ which will represent the level of equations we are creating. The integer $m$ goes from $1$ to $19$ since we differentiate $19$ times at most.
\item We create a second ``for'' loop in the interior of the first loop on a number $n$ which represents one of the equations of level $m$. Thanks to the definition of the function $F$ given in Subsection~\ref{denomb}, we have $F(m-1)+1\leqslant n\leqslant F(m)$. If $m=1$, then $n$ goes from $F(0)+1=2$ to $F(1)=5$ ($n$ represents $\partial_1$, $\partial_2$, $\partial_3$ or $\partial_t$). If $m=2$, then $n$ goes from $F(1)+1=6$ to $F(2)=15$ ($n$ represents $\partial^2_{11},\partial^2_{12},\partial^2_{13},\partial^2_{1t},\partial^2_{22},\partial^2_{23},\partial^2_{2t},\partial^2_{33},\partial^2_{3t}$ or $\partial^2_{tt}$), etc. This loop is parallelized on our $8$ cores. In this loop, we want to create the $n$-th equation denoted $E_n$, which is of level $m$. Hence we take a suitable equation of level $m-1$ denoted $E_r$ which is so that if we differentiate $E_r$ with respect to $1,2,3$ or $t$, we obtain $E_n$. For example, if we consider $m=2$ and if we want to obtain the first equation of \eqref{syst1} differentiated two times with respect to $1$, then we consider the equation $E_r$ to be the first equation of \eqref{syst1} differentiated one time with respect to $1$ and differentiated with respect to $1$ to obtain $E_n$.
\item Once the loop on $n$ is ended, we have in our matrix $e$ all the equations of level $m-1$ and in $h$ we have just created all the equations of level $m$. Now we just have to use our evaluation function \emph{ep} on $h$ to obtain the coefficients of the lines of $A_1$ corresponding to the equations that are of level $m$, i.e. the equations numbered from $F(m-1)+1$ to $F(m)$. We write these coefficients in our .txt file under the form $i$ $j$ $A_1(i,j)$.
\item We update now $e$, take $e=h$, we empty $h$ and we can go to the following loop $m+1$.
\end{enumerate}
At the end we have created a file containing the coefficients of a sparse matrix $A_1$ of size $(8855,14950)$. Using the same program with $z^2$ and the two others equation we obtain five other files representing five matrices that we gather as in \eqref{dec-mat} to obtain the matrix ${L_0}(\xi^0)={L_0^0}$. Our matrix ${L_0^0}$, which represents all the equations, is of size $(30360,29900)$ and has $651128$ nonzero coefficients. Only $0.07\%$ of the coefficients are different from $0$, with an average of $21.44$ nonzero coefficients on each row, which is logical since we are working with coefficients that are polynomials of small degree, so we do not create many terms on each line when we differentiate the equations.
In the following figure, one can observe how the nonzero coefficients of ${L_0^0}$ are distributed.
\begin{figure}[!ht]
\label{figure-non=zero-L00}
\begin{center}
\includegraphics[scale=0.55]{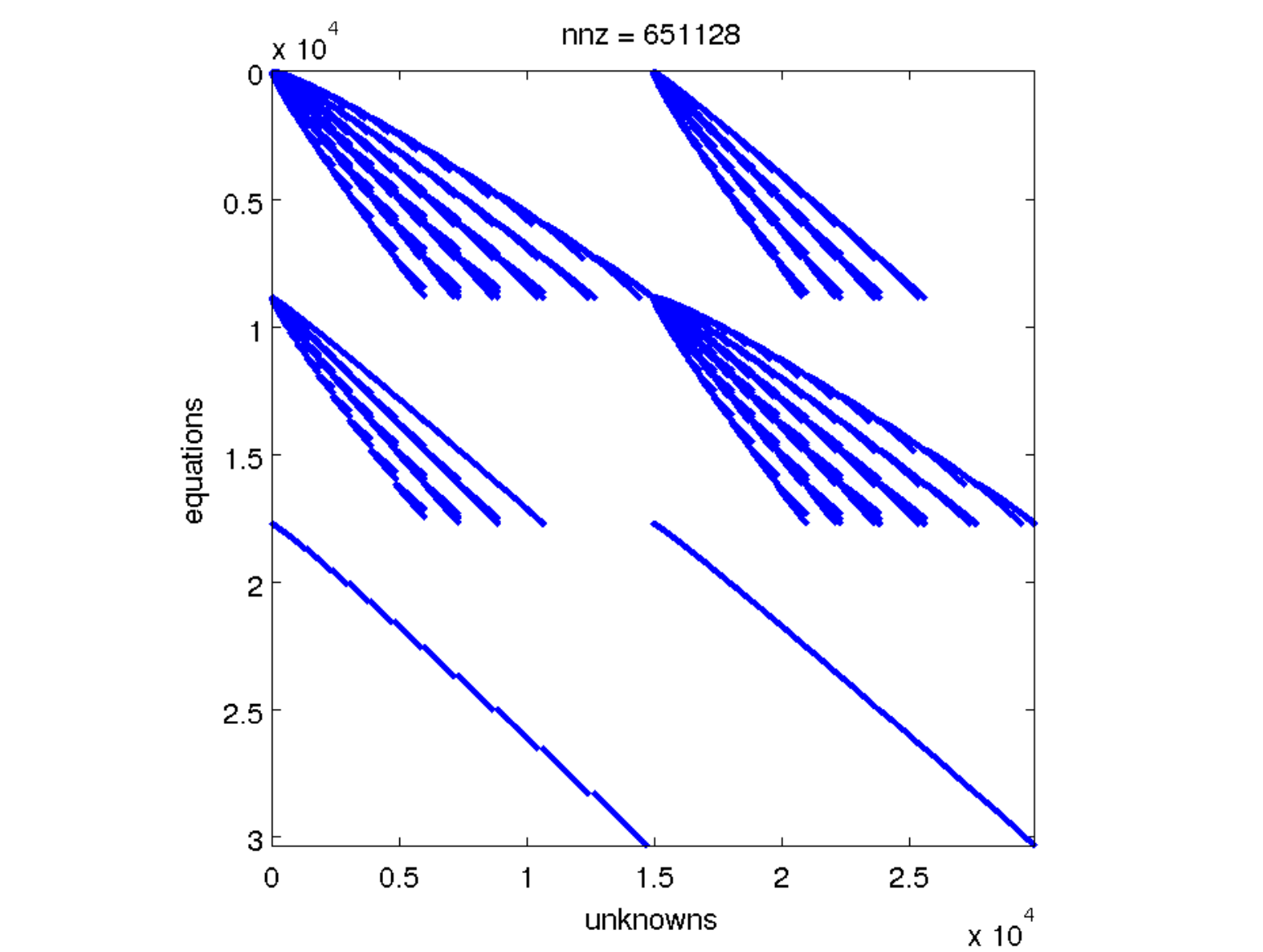}
\caption{Distribution of the nonzero coefficients of ${L_0^0}$.}
\end{center}
\end{figure}

\bibliographystyle{plain}
\bibliography{Biblio}

\def\cprime{$'$} \newcommand{\SortNoop}[1]{}
\begin{thebibliography}{10}

\bibitem{MR3039207}
Fatiha Alabau-Boussouira and Matthieu L{\'e}autaud.
\newblock Indirect controllability of locally coupled wave-type systems and
  applications.
\newblock {\em J. Math. Pures Appl. (9)}, 99(5):544--576, 2013.

\bibitem{MR924574}
Vladimir~M. Alekseev, Vladimir~M. Tikhomirov, and Sergej~V. Fomin.
\newblock {\em Optimal control}.
\newblock Contemporary Soviet Mathematics. Consultants Bureau, New York, 1987.
\newblock Translated from the Russian by V. M. Volosov.

\bibitem{MR2554977}
Farid Ammar-Khodja, Assia Benabdallah, C\'{e}dric Dupaix, and Manuel
  Gonz\'{a}lez-Burgos.
\newblock A generalization of the {K}alman rank condition for time-dependent
  coupled linear parabolic systems.
\newblock {\em Differ. Equ. Appl.}, 1(3):427--457, 2009.

\bibitem{AmmarKhodja2011555}
Farid Ammar-Khodja, Assia Benabdallah, Manuel Gonz\'{a}lez-Burgos, and Luz
  de~Teresa.
\newblock The {K}alman condition for the boundary controllability of coupled
  parabolic systems. bounds on biorthogonal families to complex matrix
  exponentials.
\newblock {\em Journal de Math\'{e}matiques Pures et Appliqu\'{e}es}, 96(6):555
  -- 590, 2011.

\bibitem{2011-ABGT-MCRF}
Farid Ammar-Khodja, Assia Benabdallah, Manuel Gonz\'{a}lez-Burgos, and Luz
  de~Teresa.
\newblock Recent results on the controllability of linear coupled parabolic
  problems: {A} survey.
\newblock {\em Mathematical Control and Related Fields}, 1(3):267--306, 2011.

\bibitem{CG1}
Nicolas Carre{\~n}o and Sergio Guerrero.
\newblock Local null controllability of the {$N$}-dimensional {N}avier-{S}tokes
  system with {$N-1$} scalar controls in an arbitrary control domain.
\newblock {\em J. Math. Fluid Mech.}, 15(1):139--153, 2013.

\bibitem{92mcss}
Jean-Michel Coron.
\newblock Global asymptotic stabilization for controllable systems without
  drift.
\newblock {\em Math. Control Signals Systems}, 5(3):295--312, 1992.

\bibitem{92cras}
Jean-Michel Coron.
\newblock Contr\^{o}labilit\'{e} exacte fronti\`{e}re de l'\'{e}quation
  d'{E}uler des fluides parfaits incompressibles bidimensionnels.
\newblock {\em C. R. Acad. Sci. Paris S\'{e}r. I Math.}, 317(3):271--276, 1993.

\bibitem{96cocv}
Jean-Michel Coron.
\newblock On the controllability of the {$2$}-{D} incompressible
  {N}avier-{S}tokes equations with the {N}avier slip boundary conditions.
\newblock {\em ESAIM Control Optim. Calc. Var.}, 1:35--75 (electronic),
  1995/96.

\bibitem{MR2302744}
Jean-Michel Coron.
\newblock {\em Control and nonlinearity}, volume 136 of {\em Mathematical
  Surveys and Monographs}.
\newblock American Mathematical Society, Providence, RI, 2007.

\bibitem{96rjmp}
Jean-Michel Coron and Andrei~V. Fursikov.
\newblock Global exact controllability of the {$2$}{D} {N}avier-{S}tokes
  equations on a manifold without boundary.
\newblock {\em Russian J. Math. Phys.}, 4(4):429--448, 1996.

\bibitem{MR2558423}
Jean-Michel Coron and Sergio Guerrero.
\newblock Local null controllability of the two-dimensional {N}avier-{S}tokes
  system in the torus with a control force having a vanishing component.
\newblock {\em J. Math. Pures Appl. (9)}, 92(5):528--545, 2009.

\bibitem{MR2503028}
Jean-Michel Coron and Sergio Guerrero.
\newblock Null controllability of the {$N$}-dimensional {S}tokes system with
  {$N-1$} scalar controls.
\newblock {\em J. Differential Equations}, 246(7):2908--2921, 2009.

\bibitem{MR0097069}
Andrew~L. Dulmage and Nathan~S. Mendelsohn.
\newblock Coverings of bipartite graphs.
\newblock {\em Canad. J. Math.}, 10:517--534, 1958.

\bibitem{04FGIP}
Enrique Fern\'{a}ndez-Cara, Sergio Guerrero, Oleg~Y. Imanuvilov, and
  Jean-Pierre Puel.
\newblock Local exact controllability of the {N}avier-{S}tokes system.
\newblock {\em J. Math. Pures Appl. (9)}, 83(12):1501--1542, 2004.

\bibitem{MR2225301}
Enrique Fern\'{a}ndez-Cara, Sergio Guerrero, Oleg~Y. Imanuvilov, and
  Jean-Pierre Puel.
\newblock Some controllability results for the {$N$}-dimensional
  {N}avier-{S}tokes and {B}oussinesq systems with {$N-1$} scalar controls.
\newblock {\em SIAM J. Control Optim.}, 45(1):146--173 (electronic), 2006.

\bibitem{96FIbook}
Andrei~V. Fursikov and Oleg~Y. Imanuvilov.
\newblock {\em Controllability of evolution equations}, volume~34 of {\em
  Lecture Notes Series}.
\newblock Seoul National University Research Institute of Mathematics Global
  Analysis Research Center, Seoul, 1996.

\bibitem{99Fursikov-Imanuvilov}
Andrei~V. Fursikov and Oleg~Y. Imanuvilov.
\newblock Exact controllability of the {Navier-Stokes} and {Boussinesq}
  equations.
\newblock {\em Russian Math. Surveys}, 54:565--618, 1999.

\bibitem{Gromovbook}
Mikhael Gromov.
\newblock {\em Partial differential relations}, volume~9 of {\em Ergebnisse der
  Mathematik und ihrer Grenzgebiete (3) [Results in Mathematics and Related
  Areas (3)]}.
\newblock Springer-Verlag, Berlin, 1986.

\bibitem{MR2371118}
Sergio Guerrero.
\newblock Controllability of systems of {S}tokes equations with one control
  force: existence of insensitizing controls.
\newblock {\em Ann. Inst. H. Poincar\'e Anal. Non Lin\'eaire},
  24(6):1029--1054, 2007.

\bibitem{gueye2012}
Mamadou Gueye.
\newblock Insensitizing controls for the {N}avier-{S}tokes equations.
\newblock {\em Ann. Inst. H. Poincar\'e Anal. Non Lin\'eaire}, 30(5):825--844,
  2013.

\bibitem{Icocv}
Oleg~Y. Imanuvilov.
\newblock On exact controllability for the {N}avier-{S}tokes equations.
\newblock {\em ESAIM Control Optim. Calc. Var.}, 3:97--131 (electronic), 1998.

\bibitem{Icocv2}
Oleg~Y. Imanuvilov.
\newblock Remarks on exact controllability for the {N}avier-{S}tokes equations.
\newblock {\em ESAIM Control Optim. Calc. Var.}, 6:39--72 (electronic), 2001.

\bibitem{1968-Lions-Magenes-I}
Jacques-Louis Lions and Enrico Magenes.
\newblock {\em Probl\`{e}mes aux limites non homog\`{e}nes et applications.
  {V}ol. 1}.
\newblock Travaux et Recherches Math\'{e}matiques, No. 17. Dunod, Paris, 1968.

\bibitem{MR1371594}
Jacques-Louis Lions and Enrique Zuazua.
\newblock A generic uniqueness result for the {S}tokes system and its control
  theoretical consequences.
\newblock In {\em Partial differential equations and applications}, volume 177
  of {\em Lecture Notes in Pure and Appl. Math.}, pages 221--235. Dekker, New
  York, 1996.

\bibitem{2012-Mauffrey}
Karine Mauffrey.
\newblock On the null controllability of a {$3\times3$} parabolic system with
  non-constant coefficients by one or two control forces.
\newblock {\em J. Math. Pures Appl. (9)}, 99(2):187--210, 2013.

\bibitem{MR1095132}
Alex Pothen and Chin-Ju Fan.
\newblock Computing the block triangular form of a sparse matrix.
\newblock {\em ACM Trans. Math. Software}, 16(4):303--324, 1990.

\bibitem{MR603444}
Roger Temam.
\newblock {\em Navier-{S}tokes equations}, volume~2 of {\em Studies in
  Mathematics and its Applications}.
\newblock North-Holland Publishing Co., Amsterdam, revised edition, 1979.
\newblock Theory and numerical analysis, With an appendix by F. Thomasset.

\end{thebibliography}
\end{document}